\newcommand{\A}{\mathcal{A}}
\newcommand{\G}{\mathcal{G}}
\newcommand{\M}{\mathcal{M}}
\newcommand{\N}{\mathbb{N}}
\newcommand{\R}{\mathbb{R}}
\newcommand{\E}{\mathbb{E}}
\newcommand{\e}{{\sf E}}
\newcommand{\ps}{\mathcal{P}} 
\renewcommand{\d}{\mathrm{d}}
\newcommand{\D}{{\sf d}}
\newcommand{\F}{\mathcal{F}}
\renewcommand{\P}{\mathbb{P}} 
\renewcommand{\H}{\mathcal{H}}
\renewcommand{\l}{\ell} 
\newcommand{\W}{\mathcal{W}}
\newtheorem{theorem}{Theorem}[section]
\newtheorem{proposition}[theorem]{Proposition}
\newtheorem{lemma}[theorem]{Lemma}
\newtheorem{corollary}[theorem]{Corollary}
\theoremstyle{definition}
\theoremstyle{remark}
\newtheorem{rem}{Remark} 
\title{\bf Geodesic convexity and strengthened functional inequalities in submanifolds of Wasserstein space}
\author{Louis-Pierre Chaintron
and Daniel Lacker}
\date{}
\begin{document}

\maketitle

\renewcommand{\thefootnote}{} 
\footnotetext{This work was carried out during a research stay of L.P.C.\ funded by the CERMICS, École des ponts, and the DMA, ENS-PSL, and the Fondation Sciences Mathématiques de Paris. D.L.\ acknowledges support from an Alfred P.\ Sloan Fellowship and the NSF CAREER award DMS-2045328.}
\renewcommand{\thefootnote}{\arabic{footnote}} 


\begin{abstract}
We study the geodesic convexity of various energy and entropy functionals restricted to (non-geodesically convex) submanifolds of Wasserstein spaces with their induced geometry. We prove a variety of convexity results by means of a simple general principle, which holds in the metric space setting, and which crucially requires no knowledge of the structure of geodesics in the submanifold: If the EVI gradient flow of a functional exists and leaves the submanifold invariant, then the restriction of the functional to the submanifold is geodesically convex. This leads to short new proofs of several known results, such as one of Carlen and Gangbo on strong convexity of entropy on sphere-like submanifolds,
and several new results, such as the $\lambda$-convexity of entropy on the space of couplings of $\lambda$-log-concave marginals.
Along the way, we develop sufficient conditions for existence of geodesics in Wasserstein submanifolds.
Submanifold convexity results lead systematically to improvements of Talagrand and HWI inequalities which we speculate to be closely related to concentration of measure estimates for conditioned empirical measures, and we prove one rigorous result in this direction in the Carlen-Gangbo setting.  
\end{abstract}

\tableofcontents

\section{Introduction}

It is well known that Wasserstein space $(\ps_2(\R^d),\W_2)$ is a geodesic space: For any $\mu,\nu \in \ps_2(\R^d)$, there exists a curve $\rho : [0,1] \to \ps_2(\R^d)$ such that $\rho_0=\mu$, $\rho_1=\nu$, and $\W_2(\rho_t,\rho_s)= \vert t-s \vert \W_2(\rho_0,\rho_1)$ for all $t,s \in [0,1]$. What makes this fact especially useful is the concrete structure of these geodesics, also known as displacement interpolations: For $\mu$ absolutely continuous, Brenier's theorem \cite{brenier1991polar} ensures the existence of a convex function $\varphi$ such that $\nabla\varphi\#\mu=\nu$, and we may take $\rho_t=(t\nabla\varphi + (1-t)\mathsf{Id})\#\mu$. The geodesics can also be characterized as minimizers in the Benamou-Brenier formula \cite{benamou2000computational},
\begin{equation} 
\W_2^2(\mu,\nu) = \inf\bigg\{ \int_0^1 \|v_t\|_{L^2(\rho_t)}^2\,\d t : \partial_t \rho + \nabla \cdot (\rho v)=0, \ \rho_0=\mu, \ \rho_1=\nu\bigg\}. \label{intro:BenamouBrenier}
\end{equation}
Here the continuity equation, to be understood in the weak sense, is well known to characterize absolutely continuous curves in $\ps_2(\R^d)$.

These geodesics were first developed by McCann \cite{mccann1997convexity}, to prove the uniqueness of minimizers of certain natural functionals by showing that they are strictly convex along displacement interpolations.
Recall that a functional $\e : \ps_2(\R^d) \to (-\infty,\infty]$ is geodesically $\lambda$-convex if, for every geodesic $(\rho_t)_{t \in [0,1]}$, we have
\begin{equation}
\e(\rho_t) \le (1-t) \e(\rho_0) + t \e(\rho_1) - \frac{\lambda}{2} t(1-t) \W_2^2(\rho_0,\rho_1). \label{intro:geodesicconvexity}
\end{equation}
Owing to their tractability, Wasserstein geodesics have since found many other uses. For instance, geodesic (semi-)convexity is a key ingredient in constructions of gradient flows on Wasserstein space and more general metric spaces \cite{ambrosio2008gradient}. Geodesic convexity can also be used to prove a number of important functional inequalities, such as Talagrand's inequality, the logarithmic Sobolev inequality, the HWI inequality, the Brunn-Minkowski inequality, and the Pr\'ekopa-Leindler inequality.

This paper is devoted to questions of geodesic convexity of functionals restricted to Wasserstein \emph{submanifolds}. By ``submanifold'' we really just mean ``subset'' at this point, but we adopt the  more evocative geometric terminology in view of the Wasserstein space's formal Riemannian geometry introduced by Otto \cite{otto2001geometry}. Given a subset $\M$ of $\ps_2(\R^d)$ and $\mu,\nu \in \M$, the induced distance on $\M$ is defined formally by restricting the curves in \eqref{intro:BenamouBrenier} to lie in $\M$:
\begin{equation}
\D_{\M}^2(\mu,\nu) = \inf\bigg\{ \int_0^1 \|v_t\|_{L^2(\rho_t)}^2\,\d t : \partial_t \rho + \nabla \cdot (\rho v)=0, \ \rho_0=\mu, \ \rho_1=\nu, \ \rho_t \in \M \ \forall t \in [0,1]\bigg\}. \label{intro:BenamouBrenier-subspace}
\end{equation}
We emphasize that this is not the restriction of $\W_2$ to $\M\times \M$, but rather the induced length metric. A minimizer, if one exists, is a $\M$-geodesic from $\mu$ to $\nu$ after reparametrization by arc length.
For a given energy functional $\e$ on $\M$, it is natural to study its geodesic convexity properties in the induced geometry: We say that $\e$ is geodesically $\lambda$-convex in $\M$ if \eqref{intro:geodesicconvexity} holds for all $\M$-geodesics $(\rho_t)_{t \in [0,1]}$, with $\W_2$ replaced by $\D_{\M}$ therein.

The main objectives of this article are the following:
\begin{itemize}
\item To develop what we call the \emph{EVI invariance principle}, a simple but powerful method for proving geodesic convexity of a functional in the induced geometry of a submanifold.
See Section \ref{se:metric} for the general principle in a metric space setting and Section \ref{se:examples} for applications in Wasserstein space.
\item To establish general criteria for the existence of geodesics in submanifolds (Section \ref{se:geodesics}).
\item To initiate the study of the functional inequalities in submanifolds which result from geodesic convexity (Sections \ref{se:talagrand-metric} and \ref{se:HWI-metric}), as well as their consequences for concentration of measure (Section \ref{se:transport-LD}).
\end{itemize}
Questions of convexity in submanifolds have arisen naturally in a variety of prior work studying energy-minimization problems on Wasserstein submanifolds and related constructions of constrained gradient flows. Early work of this nature by  Carlen-Gangbo \cite{carlen2003constrained,carlen2004solution} was motivated by the construction of dynamics with conserved quantities. A major recent impetus comes from the use of gradient flow methods in sampling \cite{chewi-book}. More specifically, in variational inference in Bayesian statistics \cite{wainwright-jordan}, a class of ``computationally tractable'' measures $\M$ is given, along with a typically complicated (posterior) distribution $\pi$. The minimizer of the relative entropy $H(\cdot\,|\,\pi)$ over $\M$ then serves as a more tractable surrogate for $\M$. Common choices of $\M$ are the set of Gaussian measures and the set of product measures (a.k.a.\ the mean field class). The papers \cite{lambert2022variational} and \cite{lacker2023independent} recently (respectively) identified and analyzed the associated gradient flows for $H(\cdot\,|\,\pi)$ in these two induced submanifolds of Wasserstein space.

The submanifolds of Gaussians and product measures just described are \emph{geodesically convex}, in the sense that for any two points in the submanifold there exists a geodesic in $\ps_2(\R^d)$ connecting the two points which is entirely contained in the submanifold. Trivially, the induced distance $\D_{\M}$ then coincides with $\W_2$, and the restriction of a geodesically convex function on $\ps_2(\R^d)$ to a geodesically convex submanifold remains geodesically convex in the induced submanifold geometry. With the general theory of gradient flows on metric spaces \cite{ambrosio2008gradient} in mind, we should then expect the same convergence rate for the gradient flow constrained to the submanifold, as well as similar functional inequalities relating the distance function, energy functional, and its (metric) gradient.

Non-geodesically convex submanifolds pose significantly greater difficulties, as the induced geometry and geodesics are genuinely distinct. We know of four examples which have appeared in prior work, each of which will be discussed in more detail later:
\begin{itemize}
    \item The early paper of Carlen-Gangbo \cite{carlen2003constrained} analyzes a sphere-like submanifold of measures with a constrained mean and variance (trace of covariance matrix), which is the only example we know of in which geodesics can be computed explicitly.
    \item A related example in which the entire covariance matrix is constrained was studied recently in \cite{burger2025covariance}, motivated by connections with ensemble Kalman methods and inverse problems.
    \item The paper \cite{conforti2023projected} studied gradient flows in submanifolds of couplings, with two (or more) marginals prescribed, motivated by connections with entropic optimal transport.
    \item Brenier's models of \emph{incompressible optimal transport} \cite{brenier1989least,brenier2020examples} involve the study of geodesics in a submanifold of joint probability measures with one marginal constrained. This arises as a relaxation of Arnold's interpretation of the Euler equations as a geodesic equation in the space of measure-preserving diffeomorphisms \cite{arnold1966geometrie}.
\end{itemize}
In each of these examples the submanifold takes the form $\M=\F^\perp$ for some set of continuous functions $\F$, where we define
\begin{equation}
\F^\perp = \bigg\{\rho \in \ps_2(\R^d) : \F \subset L^1(\rho), \ \int f\,\d\rho =0 \ \ \forall f \in \F\bigg\}. \label{intro:Gperp}
\end{equation}
Despite geodesics in Wasserstein space being well-understood, it is generally quite challenging to prove much about geodesics in a non-convex submanifolds such as $\F^\perp$. Geodesics can be formally described in terms of PDEs, with the constraints $\F$ contributing a Lagrange multiplier-type term to the PDE. These PDEs typically cannot be solved explicitly, and even worse, proving regularity of the multipliers appears to be an extremely difficult problem. Indeed, in the example of incompressible optimal transport, the multiplier is the pressure field, and significant technical efforts were required in \cite{ambrosio2008regularity} to prove that it is not just a distribution but in fact a function. Still more regularity would be required in order to justify the calculations one would naturally encounter in trying to prove geodesic convexity properties. We discuss these regularity problems in more detail in Section \ref{se:IncompressibleOT}.

\subsection{A general convexity principle} \label{ssec:ConvPrinc}

In this paper we propose a simple general principle which will allow us to prove a variety of convexity properties for functionals on Wasserstein submanifolds. A crucial feature of this approach is that it requires no knowledge whatsoever of the structure or regularity of geodesics. 
We give an informal statement here, with the precise metric space version stated in Theorem \ref{th:main} below. It involves the notion of \emph{evolution variational inequality}, or \emph{$\mathrm{EVI}_\lambda$ gradient flow}, which will be defined in Section \ref{se:metric}.

\begin{quote}
    \textbf{EVI invariance principle} (informal): Let $\lambda \in \R$, $\M \subset \ps_2(\R^d)$, and $\e : \ps_2(\R^d) \to (-\infty,\infty]$. Suppose the $\mathrm{EVI}_\lambda$ gradient flow of $\e$ exists and leaves $\M$ invariant; that is, if initialized in $\M$, then the gradient flow remains in $\M$ at all times. Then the restriction of $\e$ is geodesically $\lambda$-convex in $\M$ (in the induced geometry).
\end{quote}

A sufficient condition for the existence of an $\mathrm{EVI}_\lambda$ flow is that $\e$ is proper, lower semicontinuous, coercive, and $\lambda$-convex along generalized geodesics \cite[Theorem 11.2.1]{ambrosio2008gradient}. This includes the most famous examples of potential, interaction, and internal energy functionals \cite[Theorem 11.2.8]{ambrosio2008gradient}. As a partial converse, the existence of an $\mathrm{EVI}_\lambda$ flow implies that $\e$ is $\lambda$-convex along geodesics; this is due to \cite[Theorem 3.2]{daneri2008eulerian} and plays a crucial role the proof of the EVI invariance principle. The other key ingredient in the proof is a result of \cite[Theorem 3.5]{ambrosio2016optimal} showing that an $\mathrm{EVI}_\lambda$-flow in a given metric remains an $\mathrm{EVI}_\lambda$-flow in the induced length metric.
We give an alternative proof in Appendix \ref{se:alternateproof} based on the methods of \cite{baradat2020small,monsaingeon2023dynamical}.
In fact, the proof of the EVI invariance principle is quite short given prior results in the literature. Nonetheless, it appears to be a powerful tool in proving geodesic convexity properties on Wasserstein submanifolds. 

Let us illustrate this with a few examples.

\subsection{Carlen-Gangbo spheres} \label{intro:se:CarlenGangbo}
The first detailed analysis of a submanifold of $\ps_2(\R^d)$ was in \cite{carlen2003constrained}, which studied \emph{sphere-like} submanifolds
\[
\mathcal{S}_{u,\theta} := \bigg\{\rho \in \ps_2(\R^d) : \int x\,\rho(\d x) = u, \ \int |x-u|^2\,\rho(\d x) = d\theta\bigg\}, \ \ u \in \R^d, \ \theta > 0,
\]
which we will henceforth refer to as \emph{Carlen-Gangbo spheres}. Remarkably, they were able to compute explicit formulas for both the geodesics and the induced distance $\D_{\mathcal{S}_{u,\theta}}$.
Using these explicit formulas, they proved the striking fact that the differential entropy functional $H(\rho)=\int\rho\log\rho$ is geodesically $1/\theta$-convex in $\mathcal{S}_{u,\theta}$. Note that $H$ is geodesically convex but not strictly convex on the whole space $\ps_2(\R^d)$, as it is invariant under translations.

The EVI invariance leads to a short new proof, which goes as follows (with some details deferred to Theorem \ref{th:CGentropy}): Let $N_{u,\Sigma}$ denote the Gaussian measure of mean $u$ and covariance matrix $\Sigma$, for any positive definite matrix $\Sigma$. Define $\e : \ps_2(\R^d) \to [0,\infty]$ as the relative entropy
\[
\e(\rho) := H(\rho\,|\,N_{u,\theta I}) := H(\rho) + \frac{1}{\theta}\int |x-u|^2\,\rho(dx) + \frac{d}{2}\log(2\pi\theta).
\]
The terms after $H(\rho)$ are constant functions of $\rho \in {\mathcal{S}_{u,\theta}}$, and thus the geodesic convexity of $H$ in $\mathcal{S}_{u,\theta}$ is equivalent to that of $\e$.
The functional $\e$ is well known to be geodesically $1/\theta$-convex on $\ps_2(\R^d)$ due to the strong log-concavity of $N_{u,\theta I}$, and it generates an $\mathrm{EVI}_{1/\theta}$-gradient flow \cite[Theorem 11.2.8]{ambrosio2008gradient} which is governed by the Fokker-Planck equation associated with the Ornstein-Uhlenbeck process,
\begin{equation}
\partial_t\rho_t(x) = \mathrm{div}\Big(\frac{1}{\theta}(x-u) \rho_t(x)\Big) + \Delta \rho_t(x). \label{intro:FKP}
\end{equation}
Given a solution $(\rho_t)_{t \ge 0}$  of \eqref{intro:FKP}, define
\[
f(t) =  \int (x-u)\,\rho_t(x)\, \d x, \qquad g(t) = \int |x-u|^2\,\rho_t(x)\, \d x - d\theta.
\]
A simple calculation using \eqref{intro:FKP} shows that $f'(t)=-f(t)/\theta$ and $g'(t)=-2 g(t)/\theta$.
If $\rho_0 \in \mathcal{S}_{u,\theta}$, then $f(0)=0$ and $g(0)=0$, and so $f$ and $g$ are identically zero. Thus $\rho_t \in {\mathcal{S}_{u,\theta}}$ for all $t > 0$, and we conclude that the $\mathrm{EVI}_\lambda$-flow leaves $\mathcal{S}_{u,\theta}$ invariant.
The geodesic $1/\theta$-convexity of $\e$ in $\mathcal{S}_{u,\theta}$, and thus that of $H$, now follows from our EVI invariance principle.

\subsection{Additional examples}

Let us summarize the other main examples that will be developed in detail in Section \ref{se:examples}:

\begin{enumerate}[(A)]
\item Theorem \ref{th:CGpolynomials} shows that $H$ is convex on several variants of the Carlen-Gangbo sphere. For example, consider a symmetric positive definite matrix $\Sigma$, and define
\[
\mathcal{S}_{u,\Sigma} := \bigg\{\rho \in \ps_2(\R^d) : \int x\,\rho(\d x) = u, \ \int (x-u)^{\otimes 2}\,\rho(\d x) = \Sigma\bigg\}.
\]
If $\Sigma \ge \theta I$, then $H$ is again geodesically $1/\theta$-convex in $\mathcal{S}_{u,\theta}$. This was shown by Burger et al.\ \cite[Theorem 1.23]{burger2025covariance} for $(u,\Sigma)=(0,I)$. In addition, we may add the constraints $\int p(x)\,\rho(\d x)=\int p(x)\,N_{u,\Sigma}(\d x)$, for polynomials  $p$ up to a given degree, and the same convexity result holds. 
\item Theorem \ref{th:logenergy} shows that the logarithmic energy $\ps_2(\R) \ni \rho \mapsto -\int_\R\int_\R\log|x-y|\rho(\d x)\rho(\d y)$ is geodesically $1/\theta$-convex in ${\mathcal{S}_{u,\theta}}$.
\item There is a more general principle behind the scenes in the example of (A) above. Suppose $V$ is a $\lambda$-convex function such that $\pi(\d x)=e^{-V(x)}\d x$ is a probability measure, and let $\F$ be a set of eigenfunctions of the operator $\Delta-\nabla V \cdot \nabla$. Then the relative entropy functional $H(\cdot\,|\,\pi)$ is geodesically $\lambda$-convex when restricted to  $\F^\perp$, which was defined in \eqref{intro:Gperp}. 
This extends to Riemannian manifolds in place of $\R^d$. 
Note when $\pi$ is Gaussian that the associated eigenfunctions are the Hermite polynomials (or transformations thereof, if $(u,\Sigma)\neq(0,I)$), and we recover (A).
\item Suppose $\mu_1,\ldots,\mu_n\in\ps_2(\R^d)$ are given probability densities such that $-\log \mu_i$ is $\lambda$-convex for each $i$, for some $\lambda \in \R$. Let $\Pi=\Pi(\mu_1,\ldots,\mu_n)$ denote the set of couplings, i.e., the set of probability measures on $\R^{nd}$ with marginals $\mu_1,\ldots,\mu_n$. It was shown in \cite[Proposition 2.2]{conforti2023projected} that $\Pi$ is not geodesically convex unless it is a singleton. We show in Theorem \ref{thm:CouplingHConvex} that the differential entropy $H$ is geodesically $\lambda$-convex in $\Pi$.
\item In the setting of (the multiphase formulation of) incompressible optimal transport (IOT), Lavenant \cite{lavenant2017time} and Baradat-Monsaingeon \cite{baradat2020small} recently proved a conjecture of Brenier \cite[Section 4]{brenier2003extended} by showing that the averaged entropy is geodesically convex in $\Pi( \mathrm{Leb}, \mathrm{Leb} )$ equipped with the IOT metric. 
See Section \ref{se:IncompressibleOT} for a detailed presentation of this setting, including a discussion of the regularity issues that arise in the study of geodesics, as well as a simple new proof of Brenier's conjecture using the EVI invariance principle.
\end{enumerate}

\subsection{Existence of geodesics}

The EVI invariance principle is vacuous unless we know that the submanifold $\M$ admits geodesics. In Section \ref{se:geodesics} we give two theorems providing sufficient conditions for this.
The second and more novel of these theorems, tailored to the setting of \eqref{intro:Gperp}, is based on an adaptation of a Fenchel-Rockafellar duality argument of Brenier \cite{brenier2020examples}, which yields both existence of minimizers and a duality formula for $\D_\M$.
It applies only after proving a priori that $\D_\M$ is finite, i.e., that there exist curves in $\M$ of finite length between any two points, which is inobvious in general. However, in the setting of the EVI invariance principle with $\lambda > 0$, there is a natural candidate: Run the gradient flow starting from each point, noting that each converges  to the same long-time limit, and concatenate one of these curves with the time-reversal of the other to produce a curve connecting the two initial points.
Showing that this results in a curve of finite length is somewhat subtle and relies on the short-time regularization of the gradient flow.
See Section \ref{se:finitecurves} for details.
One notable consequence of our analysis is a proof (in Theorem \ref{th:CGpolynomials}) of existence of geodesics in the covariance-constrained submanifold $\mathcal{S}_{u,\Sigma}$, answering an open question from  \cite{burger2025covariance}.

\subsection{Strengthened functional inequalities} \label{se:IntroFunctionIneq}

The geodesic convexity of functionals on Wasserstein space has been used to prove several important functional inequalities.
A first example is Talagrand's inequality. Suppose $\pi$ is a  $\lambda$-log-concave probability density, in the sense that $-\log\mu$ is a $\lambda$-convex function on $\R^d$. Then the relative entropy $H(\cdot\,|\,\pi)$ is geodesically $\lambda$-convex in $\ps_2(\R^d)$. If $\lambda > 0$, this geodesic convexity leads to a quick proof of the \emph{Talagrand inequality},
\begin{equation}
\W_2^2(\mu,\pi) \le \frac{2}{\lambda}H(\mu\,|\,\pi), \quad \forall \mu \in \ps_2(\R^d). \label{intro:Talagrand}
\end{equation}
In fact, as was shown in \cite{otto2000generalization}, this follows from the stronger \emph{HWI inequality},
\begin{equation}
H( \mu\,|\,\pi ) - H(\nu\,|\,\pi) \leq \W_2( \mu,\nu) I(\mu\,|\,\pi) - \frac{\lambda}{2} \W_2^2 (\mu,\nu), \quad \forall \mu,\nu \in \ps_2(\R^d), \ H(\nu\,|\,\pi) < \infty.
\end{equation}
If $\e=H(\cdot\,|\,\pi)$ and $\M \subset \ps_2(\R^d)$ satisfy the conditions of the EVI invariance principle (with $\lambda > 0$), then we show in Propositions \ref{pr:MetricTalagrand} and \ref{pr:HWI} that analogous inequalities hold on $\M$, with $\W_2$ replaced by the induced distance:
\begin{align}
\D_{\M}^2(\mu,\pi) &\le \frac{2}{\lambda}H(\mu\,|\,\pi), \quad \forall \mu \in \M,  \label{intro:Talagrand-strengthened} \\
H( \mu\,|\,\pi ) - H(\nu\,|\,\pi) &\le \D_{\M}( \mu,\nu) I(\mu\,|\,\pi) - \frac{\lambda}{2} \D_{\M}^2 (\mu,\nu), \quad \forall \mu,\nu \in \M, \ H(\nu\,|\,\pi) < \infty.
\end{align}
For measures in $\M$ these inequalities are stronger, because $\D_{\M} \ge \W_2$. Here $I$ is the relative Fisher information, which corresponds to the metric gradient of $H(\cdot\,|\,\pi)$, and an interesting detail is that this metric gradient is the same in both $\ps_2(\R^d)$ and $\M$ in the context of the EVI invariance principle; see Proposition \ref{pr:slopes}.

These inequalities could be compared to  \emph{stability} results for functional inequalities. Stability results, such as for Talagrand's inequality as studied in \cite{mikulincer2021stability,bez2023stability}, typically take the form of lower bounds on the deficit $(2/\lambda)H(\mu\,|\,\pi) - \W_2^2(\mu,\pi)$ in terms of quantities measuring how far $\mu$ is to the (set of) equality cases.

The Talagrand inequality is well known to characterize various properties of $\pi$ pertaining to concentration of measure and large deviations \cite{gozlan2010transport}. 
The entropy $H(\cdot\,|\,\pi)$ famously arises in Sanov's theorem as the large deviation rate function for the empirical measure of iid samples from $\pi$. The restriction of a rate function to a subset is suggestive of the Gibbs conditioning principle (and this link makes the inequality \eqref{intro:Talagrand-strengthened} rather distinct from the other stability results in the literature mentioned above).
This leads us to speculate that the strengthened inequality \eqref{intro:Talagrand-strengthened} on $\M$ encodes concentration and large deviation properties for a \emph{conditioned} empirical measure.

We report a one such result in Section \ref{se:TalagrandLD} in the case of the Carlen-Gangbo sphere $\mathcal{S}_{0,1}$, though we believe there is much more to explore in this direction. Let us first quote a remarkable explicit formula from \cite[Equation (3.30)]{carlen2003constrained}:\footnote{Note that in \cite{carlen2003constrained} the definition of $\W_2^2$ is half of the usual definition, and we adopt the latter; hence our expression for $\D_{\mathcal{S}_{0,1}}^2$ is twice that of \cite{carlen2003constrained}.}
\[
\D_{\mathcal{S}_{0,1}}(\mu,\nu) = 2\sqrt{d}\arcsin\bigg(\frac{\W_2(\mu,\nu)}{2\sqrt{d}}\bigg).
\]
The corresponding strengthened Talagrand inequality \eqref{intro:Talagrand-metric} was in fact already observed in \cite{carlen2003constrained}. Defining a function $\alpha : \R \to [0,\infty]$ by
\[
\alpha(x) := \begin{cases}
2d\arcsin^2(\sqrt{x}/2\sqrt{d}) &\text{if } 0 \le x \le 2\sqrt{d} \\ \infty &\text{otherwise},
\end{cases}
\]
we can write the inequality \eqref{intro:Talagrand-metric} for $\M=\mathcal{S}_{0,1}$ (with $\lambda=1$) as
\begin{equation}
\alpha( \W_2^2(N_{0,I},\mu)) \le J(\mu) := \begin{cases} H(\mu\,|\,N_{0,I}), &\text{if }  \mu \in \mathcal{S}_{0,1} \\
\infty &\text{if } \mu \in \ps(\R^d) \setminus \mathcal{S}_{0,1} . \end{cases} \label{CG-Talagrand-alphaIntro}
\end{equation}
The left-hand side is weakly lower semicontinuous in $\mu$, while the right-hand side is not, so the same inequality remans valid if we replace $J$ by its lower semicontinuous envelope, which we show in Proposition \ref{pr:lscenvelope} to be
\[
\overline{J}(\mu) = \begin{cases}
H(\mu\,|\,\gamma) + \frac{d}{2}-\frac12\int |x|^2\,\mu(\d x) &\text{if }  \int x\,\mu(\d x)=0, \ \int |x|^2\,\mu(\d x) \le d \\ \infty &\text{otherwise}. \end{cases}
\]
This is precisely the rate function governing the large deviations principle for the centered empirical measure
\begin{equation}
\frac{1}{n}\sum_{i=1}^n \delta_{U^n_i - \frac{1}{n}\sum_{j=1}^nU^n_j}, \label{intro:empiricalmeasure-unif}
\end{equation}
where $(U^n_1,\ldots,U^n_n)$ is a random vector in $\R^{nd}$ uniformly distributed on the sphere $\sqrt{nd}\mathbb{S}^{nd-1}$, with $U^n_i$ denoting the $i$th block of $d$ coordinates. By analogy, the right-hand side $H(\mu\,|\,N_{0,I})$ of the usual Talagrand inequality is the rate function of Sanov's theorem for the empirical measure of iid Gaussians.
Exploiting this connection, we explain in Proposition \ref{pr:LDP-transport} a characterization of the inequality \eqref{CG-Talagrand-alphaIntro} in terms of a large deviation inequality for the empirical measure \eqref{intro:empiricalmeasure-unif}, inspired by the work of Gozlan-L\'eonard on connections between transport inequalities and large deviations \cite{gozlan2007large}.

In Section \ref{se:BianeVoiculescu}, we discuss in less detail an inequality on $\mathcal{S}_{0,1}$ which arises from our result on the uniform convexity of the logarithmic energy functional. Proposition \ref{pr:BianeVoiculescu} strengthens (on $\mathcal{S}_{0,1}$) an inequality due to Biane-Voiculescu which can be seen as the analogue of Talagrand's inequality in free probability. We discuss a similar but still conjectural connection with the large deviations of the empirical measure of the eigenvalues of a fixed trace ensemble of random matrices.

\subsection{Outline}
The rest of the paper is organized as follows. In Section \ref{se:metric} we develop the EVI invariance principle in full detail, in the general setting of metric spaces, along with metric space forms of the strengthened Talagrand and HWI inequalities. In Section \ref{se:geodesics} we provide sufficient conditions for the existence of geodesics in submanifolds. Section \ref{se:examples} then develops the examples described above, and finally Section \ref{se:transport-LD} discusses strengthened functional inequalities in connection with concentration and large deviations.

\section{Generalities in metric spaces} \label{se:metric}

We begin by recalling some metric space concepts, borrowing most of our definitions from \cite{ambrosio2016optimal}.
Let $(X,\D_X)$ be an extended metric space, which means that $\D_X$ satisfies all of the axioms of a metric except is allowed to take the value $+\infty$. We allow this generality because we will encounter it inevitably below when defining induced length distances on subspaces.
An \emph{$X$-geodesic} is defined as a curve $\gamma :[0,1] \to X$ satisfying 
\[
\D_X(\gamma_s,\gamma_t) = |t-s|\D_X(\gamma_0,\gamma_1) < \infty, \quad \forall s,t \in [0,1].
\]
For $\lambda \in \R$, a  function $\e : X \to (-\infty,\infty]$ is said to be \emph{$\lambda$-convex along a curve} $\gamma : [0,1] \to X$ if 
\[
 \e(\gamma_t) + \frac{\lambda}{2} t(1-t) \D_X^2(\gamma_0,\gamma_1) \le (1-t) \e(\gamma_0) + t \e(\gamma_1) , \quad \forall t \in [0,1].
\]
The function $\e$ is called \emph{geodesically $\lambda$-convex in $X$} if for any $X$-geodesic  $\gamma$ it holds that $\e$ is $\lambda$-convex along $\gamma$.
If $\lambda=0$ we simply say \emph{geodesically convex in $X$}. Note that at this stage we have made no assumptions about existence of geodesics, and if none exist then the concept of a geodesically convex function on $X$ is vacuous. In Section \ref{se:geodesics} we will discuss additional assumptions which ensure the existence of geodesics.

For a bounded interval $I \subset \R$ and for $p \in [1,\infty]$ we let $AC^p (I;(X,\D_X))$ denote the set of maps $\gamma : I \to X$ for which there exists $m \in L^p(I)$ satisfying
\[
\D_X(\gamma_s,\gamma_t) \le \int_s^t m(u)\,\d u, \quad \forall s,t \in I, \ s < t.
\]
When $p=1$ we write $AC$ instead of $AC^1$.
For $\gamma \in AC(I;(X,\D_X))$,
define the metric derivative $|\dot\gamma| : I \to [0,\infty]$ as
\begin{equation} \label{eq:MetricSlope}
|\dot\gamma|_t = \limsup_{s\to t}\frac{\D_X(\gamma_s,\gamma_t)}{|s-t|},
\end{equation}
which exists for a.e.\ $t \in I$ and defines a function in $L^p(I)$. We have $\D_X(\gamma_s,\gamma_t)\le\int_s^t|\dot\gamma|_u\,\d u$ for all $t,s \in I$ with $t > s$.
The space $AC^p_{\mathrm{loc}}((0,\infty);(X,\D_X))$ consists of maps  $\gamma : (0,\infty) \to X$ whose restriction to each bounded interval $I$ is in $AC^p(I;(X,\D_X))$.
Note for $\gamma \in AC^p_{\mathrm{loc}}((0,\infty);(X,\D_X))$ that any two points in the image $\mathrm{Im}(\gamma):=\{\gamma_t : t \in (0,\infty)\}$ must be at finite distance.

We next define the crucial concept of \emph{evolution variational inequality}, which has risen to prominence as a particularly powerful notion of gradient flow in a metric space; we refer to  the lecture notes \cite{daneri2010lecture} and the more recent paper \cite{muratori2020gradient} for overviews. We adopt \cite[Definition 3.2]{ambrosio2016optimal}. Let $\e : X \to (-\infty,\infty]$ have nonempty domain $D(\e) = \{\e < \infty\}$. Let $\lambda \in \R$. We say $\gamma \in AC^2_{\mathrm{loc}}((0,\infty);(X,\D_X))$ is an $\mathrm{EVI}_\lambda$-flow of $\e$ in $(X,\D_X)$ if $t \mapsto \e(\gamma_t)$ is lower semicontinuous and finite-valued, and for all $x \in D(\e)$ such that $\D_X(y,\mathrm{Im}(\gamma)) < \infty$ we have 
\begin{equation} \label{eq:EVI}
 \frac{1}{2} \frac{\d^+}{\d t} \D_X^2 (\gamma_t,x) + \frac{\lambda}{2} \D_X^2 ( \gamma_t , x )  \leq \e(x) - \e(\gamma_t), \ \ \forall t > 0,
\end{equation} 
where $\d^+/\d t$ denotes the upper right-derivative.
We say that $\gamma$ starts at $x \in D(\e)$ if $\D_X(\gamma_t,x)\to 0$ as $t \downarrow 0$ and $\liminf_{t\downarrow 0}\e(\gamma_t) \ge \e(x)$ (the latter being automatic if $\e$ is lower semicontinuous); this is a somewhat stronger definition of ``starts at'' compared to \cite[Definition 3.2]{ambrosio2016optimal}, but will suffice for our purposes.

An EVI$_\lambda$-flow $\gamma$ has a number of important properties, stated in developed in detail in \cite[Theorem 3.5]{muratori2020gradient}. We will most frequently use the \emph{energy identity}\footnote{Note that \cite[(3.17)]{muratori2020gradient} states the energy identity not for $|\dot\gamma^i|_t$ but rather for the right-limit $\lim_{h\to 0+}\D_X(\gamma^i_{t+h},\gamma^i_t)/h$. This agrees a.e.\ with $|\dot\gamma^i|_t^2$ because, unlike \cite[Theorem 3.5]{muratori2020gradient}, our definition of EVI flow builds in the assumption that $\gamma^i$ is absolutely continuous.}
\begin{equation} \label{eq:energyidentity}
\frac{\d}{\d t} \e(\gamma_t) = -|\dot\gamma|_t^2=|\partial\e|^2(\gamma_t), \quad a.e. \ t > 0.
\end{equation}
Here the slope $\vert \partial \e \vert : X \to [0,\infty]$ of  $\e$ at a point $x \in X$ is set to $+\infty$ if $\e(x)=\infty$, to $0$ if $x$ is isolated, and otherwise to
\[
\vert \partial \e \vert(x) := \limsup_{y \rightarrow x} \frac{(\e(x) − \e(y))_+}{\D_X(x,y)}.
\]

The main examples in this paper involve $(X,\D_X)=(\ps_2(\R^d),\W_2)$ being the Wasserstein space over $\R^d$, with the functional $\e$ being a combination of classical energy functionals.  
The differential entropy is defined by $H(\rho) = \int_{\R^d} \rho\log\rho$ when $\rho$ admits a density and $H(\rho)=\infty$ otherwise; note that the negative part of $\rho\log\rho$ is integrable for any density in $\ps_2(\R^d)$, so $H$ is well defined with values in $(-\infty,\infty]$. For $\lambda\in\R$ and a $\lambda$-convex function $V : \R^d \to \R$, we also work with the potential energy functional $\rho \mapsto \int V\,\d\rho$. The functional
\[
\e(\rho) = H(\rho) + \int V\,\d\rho
\]
is geodesically $\lambda$-convex in $\ps_2(\R^d)$, and the $\mathrm{EVI}_\lambda$-flow for $\e$ exists starting from every point in $\ps_2(\R^d)$. Moreover, the $\mathrm{EVI}_\lambda$-flow $(\rho_t)_{t \ge 0}$ satisfies the Fokker-Planck equation
\[
\partial_t \rho = \nabla \cdot (\rho V) + \Delta \rho,
\]
in the sense of distributions. This is shown in \cite[Theorem 11.2.8]{ambrosio2008gradient}.

\subsection{The induced length metric}

A subset $Y$ of $X$ will always be equipped with the length metric  (or inner metric) induced by $\D_X$. That is, 
\begin{equation} \label{eq:DefDY}
\D_Y(y,y') = \inf\bigg\{ \int_0^1|\dot\gamma|_t\,\d t : \gamma \in AC([0,1];(Y,\D_X|-{Y\times y})), \, \gamma_0=y, \, \gamma_1=y'\bigg\}.
\end{equation}
Instead of writing $\gamma \in AC([0,1];(Y,\D_X|-{Y\times y}))$ we could equivalently write ``$\gamma \in AC([0,1];(X,\D_X))$ and $\gamma_t \in Y$ for all $t \in [0,1]$.''
Note that this may take the value $+\infty$ even if $\D_X$ does not. We always have $\D_Y \ge \D_X$ on $Y \times Y$.
We will say that \emph{$Y$ is a geodesic space} if for every $y,y'\in Y$ there exists a $Y$-geodesic between them, i.e., a map $\gamma : [0,1] \to Y$ satisfying $\gamma_0=y$,  $\gamma_1=y'$, and $\D_Y(\gamma_s,\gamma_t)=|t-s|\D_Y(y,y') < \infty$ for all $t,s \in [0,1]$.

The distance $\D_Y$ admits several equivalent forms. We have
\begin{equation} \label{eq:DefDY-squared}
\D_Y(y,y') = \inf\bigg\{ \bigg( \int_0^1|\dot\gamma|_t^2\,\d t \bigg)^{1/2}: \gamma \in AC^2([0,1];(Y,\D_X|_{Y\times Y})), \, \gamma_0=y, \, \gamma_1=y'\bigg\}.
\end{equation}
Indeed, this is because any curve $\gamma \in AC([0,1];(Y,\D_X|_{Y\times Y}))$ can be reparametrized by arc length, i.e., to have a.e.\ constant metric derivative $|\dot\gamma|$ (see, e.g., \cite[Lemma 1.1.4(b)]{ambrosio2008gradient} or \cite[Proposition 2.5.9]{burago2001course}).
Alternatively, recall that the \emph{length} of $\gamma : [0,1] \to (X,\D_X)$ is 
\begin{equation} \label{eq:discLength}
\l ( \gamma ) := \sup \bigg\{ \sum_{k=0}^{n-1} \D_X ( \gamma_{t_k}, \gamma_{t_{k+1}} ) : 0 = t_0 < t_1 < \ldots < t_n = 1  \bigg\} 
\end{equation}
is finite.
According to \cite[Theorem 2.7.6]{burago2001course}, the metric derivative $|\dot\gamma|$ given by \eqref{eq:MetricSlope} of a Lipschitz curve then satisfies 
\begin{equation} \label{eq:lenghtCont}
\l ( \gamma ) = \int_0^1 \vert \dot\gamma \vert_t \, \d t.
\end{equation} 
Hence,
\begin{equation} \label{eq:DefDY-length}
\D_Y(y,y') = \inf\bigg\{\ell(\gamma) : \gamma : [0,1] \to (Y,\D_X|_{Y\times Y}) \ \text{Lipschitz}, \ \gamma_0=y, \, \gamma_1=y'\bigg\}.
\end{equation}
We will not use this last formulation.

\subsection{The EVI invariance principle}

Our \emph{EVI invariance principle} described in the introduction takes the following general form. It follows so quickly from prior results that it seems scarcely worthy of a designation as a ``theorem,'' but we will illustrate its power in  many examples in Section \ref{se:examples}, where we believe it will earn the title.

\begin{theorem} \label{th:main}
Let $\lambda \in \R$.
Let $(X,\D_X)$ be an extended metric space, and let $\e : X \to (-\infty,\infty]$ be bounded from below with nonempty domain $D(\e)=\{\e < \infty\}$.
Let $Y \subset X$.
Suppose that for each $y \in Y \cap D(\e)$ there exists an $\mathrm{EVI}_\lambda$-flow $(\gamma^y_t)_{t>0}$ of $\e$ in $(X,\D_X)$ which starts from $y$, such that $\gamma^y_t \in Y$ for each $t > 0$. Then $(\gamma^y_t)_{t>0}$ is also an $\mathrm{EVI}_\lambda$-flow of $\e$ in $(Y,\D_Y)$ starting from $y$, for each $y \in Y \cap D(\e)$. Moreover, $\e$ is geodesically $\lambda$-convex in $Y$.
\end{theorem}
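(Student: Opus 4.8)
The plan is to assemble both conclusions from the two results recalled in Section~\ref{ssec:ConvPrinc}: the stability of $\mathrm{EVI}_\lambda$-flows under passage to an induced length metric \cite[Theorem~3.5]{ambrosio2016optimal}, and the implication ``existence of an $\mathrm{EVI}_\lambda$-flow from every point of the domain'' $\Rightarrow$ ``geodesic $\lambda$-convexity'' \cite[Theorem~3.2]{daneri2008eulerian}. The substance of the argument is then to check that the hypotheses of these results are met and to track how the metric derivative, the domain $D(\e)$, and the ``starts at'' condition behave when $\D_X$ is replaced by $\D_Y$.

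First I would record the elementary fact that a curve $\gamma$ with values in $Y$ has the same metric derivative whether it is read in $(X,\D_X)$ or in $(Y,\D_Y)$. Writing $|\dot\gamma|$ for the metric derivative in $\D_X$, the definition \eqref{eq:DefDY} applied to the competitor $\gamma|_{[s,t]}$ gives $\D_Y(\gamma_s,\gamma_t)\le\int_s^t|\dot\gamma|_u\,\d u$; dividing by $|t-s|$ and letting $s\to t$ along Lebesgue points of $|\dot\gamma|$ shows the metric derivative in $\D_Y$ is $\le|\dot\gamma|$ a.e., while the reverse inequality is immediate from $\D_Y\ge\D_X$ on $Y\times Y$. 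Hence the two metric derivatives coincide, each $\gamma^y$ belongs to $AC^2_{\mathrm{loc}}((0,\infty);(Y,\D_Y))$, and $t\mapsto\e(\gamma^y_t)$ is lower semicontinuous and finite-valued, so $\gamma^y$ is an admissible candidate for an $\mathrm{EVI}_\lambda$-flow in $(Y,\D_Y)$.

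Next, the $\mathrm{EVI}$ inequality \eqref{eq:EVI} for $\gamma^y$ in $(Y,\D_Y)$ need only be tested against $x\in Y\cap D(\e)$ with $\D_Y(x,\mathrm{Im}(\gamma^y))<\infty$; since $\D_Y\ge\D_X$, every such $x$ also satisfies $\D_X(x,\mathrm{Im}(\gamma^y))<\infty$, so \cite[Theorem~3.5]{ambrosio2016optimal} (or the self-contained argument of Appendix~\ref{se:alternateproof}) applies and yields that $\gamma^y$ is an $\mathrm{EVI}_\lambda$-flow of $\e$ in $(Y,\D_Y)$. For the strengthened ``starts at $y$'' condition I would show $\D_Y(\gamma^y_t,y)\to0$ as $t\downarrow0$ directly: by the energy identity \eqref{eq:energyidentity} together with $\e(\gamma^y_t)\to\e(y)$ as $t\downarrow0$ (a standard property of $\mathrm{EVI}_\lambda$-flows starting at $y$, deduced from \eqref{eq:EVI} with $x=y$ and $\D_X(\gamma^y_t,y)\to0$), one has $\int_0^t|\dot\gamma^y|_u^2\,\d u=\e(y)-\e(\gamma^y_t)<\infty$, so $|\dot\gamma^y|\in L^2(0,t)\subset L^1(0,t)$ and hence $\D_Y(\gamma^y_t,y)\le\int_0^t|\dot\gamma^y|_u\,\d u\to0$; the requirement $\liminf_{t\downarrow0}\e(\gamma^y_t)\ge\e(y)$ is unchanged. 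This proves the first assertion.

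Finally, the first assertion says precisely that $\e$ admits an $\mathrm{EVI}_\lambda$-flow in $(Y,\D_Y)$ starting from every point of $D(\e|_Y)=Y\cap D(\e)$, so the geodesic $\lambda$-convexity of $\e$ in $Y$ follows at once from \cite[Theorem~3.2]{daneri2008eulerian}; note that $\lambda$-convexity along a $Y$-geodesic with an endpoint outside $D(\e)$ is vacuous, the right-hand side being $+\infty$. The only step I expect to present a genuine obstacle is transferring the $\mathrm{EVI}$ inequality from $\D_X$ to the length metric $\D_Y$: this is where metric-geometric input cannot be avoided, and it is exactly the content of \cite[Theorem~3.5]{ambrosio2016optimal} (respectively Appendix~\ref{se:alternateproof}); everything else is bookkeeping about how $\D_Y$, the metric derivative, and the domain restrict.
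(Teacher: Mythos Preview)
Your proposal is correct and follows essentially the same route as the paper: restrict the $\mathrm{EVI}_\lambda$-flow to $(Y,\D_X|_{Y\times Y})$, invoke \cite[Theorem~3.5]{ambrosio2016optimal} to pass to the length metric $\D_Y$, and then apply the Daneri--Savar\'e implication to obtain geodesic $\lambda$-convexity, with the endpoint-outside-$D(\e)$ case being vacuous. Your treatment of ``starts at $y$'' via the energy identity and $L^2(0,t)\subset L^1(0,t)$ is a valid variant of the paper's argument, which instead quotes the bound $\tfrac12\D_Y^2(\gamma^y_t,y)\le t(\e(y)-\e(\gamma^y_t))$ directly from the proof of \cite[Theorem~3.5]{ambrosio2016optimal}; the two are equivalent up to Cauchy--Schwarz. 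One small inaccuracy: Appendix~\ref{se:alternateproof} gives an alternative proof of the \emph{geodesic convexity} conclusion (via the Baradat--Monsaingeon perturbation), not of the $\mathrm{EVI}$-under-length-metric step, so your parenthetical reference to it as a substitute for \cite[Theorem~3.5]{ambrosio2016optimal} is misplaced.
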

\begin{proof}
First observe that the $\mathrm{EVI}_\lambda$ property is trivially inherited in the restriction metric, so that $(\gamma^y_t)_{t>0}$ is in fact an $\mathrm{EVI}_\lambda$-flow of $\e$ in $(Y,\D_X|_{Y \times Y})$. It was shown in \cite[Theorem 3.5]{ambrosio2016optimal} that an $\mathrm{EVI}_\lambda$-flow in a given metric remains an $\mathrm{EVI}_\lambda$-flow in the induced length metric, so  that $(\gamma^y_t)_{t>0}$ is an $\mathrm{EVI}_\lambda$-flow of $\e$ in $(Y,\D_Y)$ starting from $y$.
We then argue as in \cite[Theorem 2.9]{daneri2010lecture}, which shows that the existence of an $\mathrm{EVI}_\lambda$-flow from every starting point $y \in Y \cap D(\e)$ implies the $\lambda$-convexity of $\e$ along $Y$-geodesics whose endpoints are in $D(\e)$. (This argument does not require finiteness of the metric.) The $\lambda$-convexity along $\gamma$ is trivial if at least one endpoint is not in $D(\e)$, and we deduce that $\e$ is $\lambda$-convex along any $Y$-geodesic.

There is some subtlety above in saying that $(\gamma^y_t)_{t>0}$ is an $\mathrm{EVI}_\lambda$-flow of $\e$ in $(Y,\D_Y)$ \emph{starting from $y$}. One should check that $\D_Y(\gamma^y_t,y) \to 0$, which of course does not follow from $\D_X(\gamma^y_t,y)\to 0$. However, the last inequality in the proof of \cite[Theorem 3.5]{ambrosio2016optimal} applied with $y=x$ shows that
\[
\frac12 \D_Y^2(\gamma^y_t,y) \le t\big(\e(y)-\e(\gamma^y_t)\big).
\]
From the assumption that $\e$ is bounded from below, and because $y \in D(\e)$, we deduce that $\D_Y(\gamma^y_t,y) \to 0$.
\end{proof}

We give an alternate proof in Appendix \ref{se:alternateproof}, based on a variational technique developed by Baradat-Monsaingeon \cite{baradat2020small} and Monsaingeon et al \cite{monsaingeon2023dynamical}. 
These papers share a similar philosophy to ours, in the sense that they prove geodesic convexity in some specific models using methods that do not require detailed knowledge of geodesics.

In Theorem \ref{th:main}, the statement that $\e$ is geodesically $\lambda$-convex in $Y$ might be vacuous, if $Y$ does not contain many geodesics.\footnote{Even if $Y$ is not a geodesic space, we could still make statements about \emph{approximate} geodesic convexity, as explained in \cite[Theorem 3.2]{daneri2008eulerian}.} We will return to this point in Section \ref{se:geodesics}, where we provide sufficient conditions for $Y$ to be a geodesic space. First, we discuss the general principles behind the strengthened forms of functional inequalities introduced in Section \ref{se:IntroFunctionIneq}.

\subsection{Talagrand inequalities} \label{se:talagrand-metric}

The $\lambda$-geodesic convexity of a functional can be used to prove various functional inequalities, and we explain some here in the context of the EVI invariance principle. The first is related to Talagrand's inequality, and it is well known already in the metric space EVI setting (e.g., \cite[(3.18a)]{muratori2020gradient}), but we include its short proof.

\begin{proposition} \label{pr:MetricTalagrand}
Suppose the assumptions of Theorem \ref{th:main} hold with $\lambda > 0$. Suppose also that $Y$ is a geodesic space and that $\inf_{y \in Y}\e(y)$ is attained at some $y_* \in Y$. Then
\begin{equation}
\D_Y^2 (y,y_*) \le \frac{2}{\lambda}(\e(y)-\e(y_*)) , \quad \forall y \in Y. \label{intro:Talagrand-metric}
\end{equation}
\end{proposition}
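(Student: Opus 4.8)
The plan is to exploit the EVI$_\lambda$-flow of $\e$ in $(Y,\D_Y)$ provided by Theorem \ref{th:main}, together with the assumption that the infimum of $\e$ on $Y$ is attained at $y_*$, and run a standard Grönwall-type argument along the flow. First I would fix an arbitrary $y \in Y \cap D(\e)$ (the inequality being trivial if $\e(y) = \infty$), and let $(\gamma^y_t)_{t>0}$ denote the EVI$_\lambda$-flow in $(Y,\D_Y)$ starting from $y$, which exists by Theorem \ref{th:main}. Since $y_* \in D(\e)$ and $\D_Y(y_*,\mathrm{Im}(\gamma^y)) \le \D_Y(y_*,\gamma^y_t) < \infty$ for any fixed $t>0$ (points on an $AC^2_{\mathrm{loc}}$ curve are at finite $\D_Y$-distance, and $\D_Y(\gamma^y_s,\gamma^y_t)$ is finite for all $s,t>0$), I may apply the defining EVI inequality \eqref{eq:EVI} with $x = y_*$:
\[
\frac12\frac{\d^+}{\d t}\D_Y^2(\gamma^y_t,y_*) + \frac{\lambda}{2}\D_Y^2(\gamma^y_t,y_*) \le \e(y_*) - \e(\gamma^y_t) \le 0,
\]
where the last inequality uses that $y_*$ minimizes $\e$ over $Y$ and $\gamma^y_t \in Y$.

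Next I would integrate this differential inequality. Writing $\varphi(t) := \D_Y^2(\gamma^y_t,y_*)$, the bound above gives $\frac{\d^+}{\d t}\varphi(t) \le -\lambda\varphi(t)$, so $\frac{\d^+}{\d t}(e^{\lambda t}\varphi(t)) \le 0$; since $\varphi$ is locally absolutely continuous on $(0,\infty)$ (the flow is in $AC^2_{\mathrm{loc}}$ and $\D_Y(\cdot,y_*)$ is $1$-Lipschitz), the function $t\mapsto e^{\lambda t}\varphi(t)$ is nonincreasing. Meanwhile, the EVI inequality applied instead with $x = y$ and combined with the ``starts from $y$'' property — exactly the estimate $\tfrac12\D_Y^2(\gamma^y_t,y) \le t(\e(y)-\e(\gamma^y_t))$ quoted at the end of the proof of Theorem \ref{th:main} — together with the energy identity \eqref{eq:energyidentity} showing $t\mapsto\e(\gamma^y_t)$ is nonincreasing, will let me control $\e(\gamma^y_t)$ and conclude $\D_Y(\gamma^y_t,y)\to 0$ and $\e(\gamma^y_t)\to\e(y)$ as $t\downarrow 0$.

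Putting these together: monotonicity of $e^{\lambda t}\varphi(t)$ yields $\varphi(0^+) \ge e^{\lambda t}\varphi(t)$ for all $t>0$, and since $\varphi$ is continuous up to $0$ with $\varphi(0^+) = \D_Y^2(y,y_*)$, letting $t\to\infty$ (using $\lambda>0$, so $e^{\lambda t}\to\infty$) forces $\varphi(t)\to 0$ and, more precisely, after also exploiting the energy identity to relate the decay of $\varphi$ to the energy gap, gives the clean bound $\D_Y^2(y,y_*) \le \frac{2}{\lambda}(\e(y)-\e(y_*))$. Concretely, the cleanest route is the classical one: from \eqref{eq:EVI} with $x=y_*$ we have $\tfrac12\frac{\d^+}{\d t}\varphi(t) \le \e(y_*)-\e(\gamma^y_t)$, and from the energy identity $\frac{\d}{\d t}\e(\gamma^y_t) = -|\dot\gamma^y|_t^2 \le 0$; combining $\lambda$-convexity-type estimates or simply integrating $\frac{\d}{\d t}(\e(\gamma^y_t)-\e(y_*)) \le -\lambda(\e(\gamma^y_t)-\e(y_*))$ — which itself follows from the EVI inequality, since $|\partial\e|^2(\gamma^y_t)\ge \lambda(\e(\gamma^y_t)-\e(y_*))$ is a standard consequence of EVI$_\lambda$ plus attainment of the minimum — then shows both $\varphi(t)$ and $\e(\gamma^y_t)-\e(y_*)$ decay to $0$, and integrating $\tfrac12\frac{\d^+}{\d t}\varphi \le -(\e(\gamma^y_t)-\e(y_*))$ over $(0,\infty)$ against the exponential decay gives the constant $2/\lambda$.

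The main obstacle I anticipate is purely a matter of bookkeeping at the endpoint $t=0$: one must be sure that $\D_Y(\gamma^y_t,y)\to0$ (not merely $\D_X(\gamma^y_t,y)\to0$) and that $\e(\gamma^y_t)\to\e(y)$, so that $\varphi(0^+)=\D_Y^2(y,y_*)$ and the energy gap at time $0^+$ equals $\e(y)-\e(y_*)$. Both facts are already furnished by the proof of Theorem \ref{th:main} — the first from the inequality $\tfrac12\D_Y^2(\gamma^y_t,y)\le t(\e(y)-\e(\gamma^y_t))$ and boundedness below of $\e$, the second from lower semicontinuity of $t\mapsto\e(\gamma^y_t)$ combined with the ``starts from $y$'' requirement $\liminf_{t\downarrow0}\e(\gamma^y_t)\ge\e(y)$ and the monotonicity $\e(\gamma^y_t)\le\e(y)$ from the energy identity. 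Everything else is the standard metric-space Talagrand argument, which is why this is stated to be ``well known already in the metric space EVI setting.''
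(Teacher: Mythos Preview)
Your approach is genuinely different from the paper's, and the specific integration step you rely on does not give what you claim. The paper's proof is a three-line argument that uses the \emph{geodesic $\lambda$-convexity} of $\e$ on $Y$ (the conclusion of Theorem~\ref{th:main}) rather than the EVI flow: take a $Y$-geodesic $(\gamma_t)_{t\in[0,1]}$ from $y_*$ to $y$, apply $\lambda$-convexity together with $\e(\gamma_t)\ge\e(y_*)$ to obtain
\[
0 \le \e(\gamma_t)-\e(y_*) \le t\big(\e(y)-\e(y_*)\big) - \frac{\lambda}{2}t(1-t)\D_Y^2(y,y_*),
\]
divide by $t$, and send $t\downarrow 0$. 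Notice that this is the only place the hypothesis ``$Y$ is a geodesic space'' is used; your argument never invokes a geodesic at all.

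The gap in your route is in the sentence ``integrating $\tfrac12\tfrac{\d^+}{\d t}\varphi \le -(\e(\gamma^y_t)-\e(y_*))$ over $(0,\infty)$ against the exponential decay gives the constant $2/\lambda$.'' Integrating that differential inequality (using $\varphi(\infty)=0$) yields
\[
\frac12\,\D_Y^2(y,y_*) = \frac12\varphi(0^+) \;\ge\; \int_0^\infty \big(\e(\gamma^y_t)-\e(y_*)\big)\,\d t,
\]
a \emph{lower} bound on $\D_Y^2(y,y_*)$, not the upper bound you want; an upper estimate on the integral coming from exponential decay of the energy does not reverse this. A correct EVI-only argument does exist, but it runs through the EVI inequality at $x=y$ (not $x=y_*$): integrating that one gives $\D_Y^2(\gamma^y_T,y)\le \tfrac{2(1-e^{-\lambda T})}{\lambda}\big(\e(y)-\e(\gamma^y_T)\big)\le \tfrac{2}{\lambda}\big(\e(y)-\e(y_*)\big)$, after which one lets $T\to\infty$ using the contraction $\gamma^y_T\to y_*$ (which is what EVI at $x=y_*$ actually buys). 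That is essentially the mechanism behind Proposition~\ref{pr:Talagrand-extreme}, but it is not the argument you wrote, and in any case the paper's geodesic-convexity proof is much shorter.
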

\begin{proof}
Let $(\gamma_t)_{t \in [0,1]}$ be a geodesic from $y_*$ to $y$. By Theorem \ref{th:main}, $\e$ is geodesically $\lambda$-convex in $Y$, and thus
\begin{align*}
0 &\le \e(\gamma_t) - \e(y_*) \le t(\e(y)-\e(y_*)) - \frac{\lambda}{2}t(1-t)\D_Y^2 (y,y_*).
\end{align*}
Divide by $t$ and send $t \downarrow 0$ to deduce the claim.
\end{proof}

The inequality \eqref{intro:Talagrand-metric} is stronger than the analogous one with $\D_X$ in place of $\D_{Y}$, simply because $\D_X \vert_{Y\times Y} \le \D_Y$. When $X$ is Wasserstein space and $\e$ is the relative entropy with respect to the standard Gaussian measure, the inequality on $X$ is Talagrand's inequality \cite{talagrand1996transportation}. The inequality \eqref{intro:Talagrand-metric} then translates to a strengthening of Talagrand's inequality when restricted to an appropriate submanifold $Y$, as was first observed in \cite{carlen2003constrained} for Carlen-Gangbo spheres. This will be discussed in more detail in Section \ref{se:TalagrandLD}.

The smaller the submanifold $Y$, the larger the left-hand side of \eqref{intro:Talagrand-metric}. The following proposition takes this observation to its extreme, by applying the EVI invariance principle when the submanifold $Y$ is taken to be the image of a single EVI curve. We give an alternative proof as well, without using the EVI invariance principle, from which Proposition \ref{pr:MetricTalagrand} could then be deduced as corollary.

\begin{proposition} \label{pr:Talagrand-extreme}
Let $(X,\D_X)$ be a complete metric space and $\lambda > 0$. Let $\e : X \to (-\infty,\infty]$ be lower semicontinuous and bounded from below with nonempty domain. Suppose $(\gamma_t)_{t > 0}$ is an $\mathrm{EVI}_\lambda$-flow of $\e$ in $(X,\D_X)$. Then, for $t > 0$,
\begin{equation}
\int_t^\infty |\dot\gamma|_s\,\d s \le \sqrt{\frac{2}{\lambda}\big(\e(\gamma_t)-\inf_{x \in X}\e(x)\big)}.
\end{equation}
\end{proposition}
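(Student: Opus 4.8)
The plan is to reduce the inequality to a scalar differential inequality along the flow curve and then integrate it. By the energy identity \eqref{eq:energyidentity}, the map $s\mapsto\e(\gamma_s)$ is locally absolutely continuous, nonincreasing, and $\tfrac{\d}{\d s}\e(\gamma_s)=-|\dot\gamma|_s^2$ for a.e.\ $s$; since $\e$ is bounded below, the limit $\l:=\lim_{s\to\infty}\e(\gamma_s)$ exists and lies in $[\inf_X\e,\infty)$. Set $\phi(s):=\e(\gamma_s)-\l\ge 0$, so that $\phi$ is nonincreasing and locally absolutely continuous with $\phi(s)\to 0$ as $s\to\infty$ and $\phi'(s)=-|\dot\gamma|_s^2$ a.e. Because $\l\ge\inf_X\e$, it suffices to prove
\[
\int_t^\infty|\dot\gamma|_s\,\d s\le\sqrt{\tfrac{2}{\lambda}\,\phi(t)}.
\]

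The crux is a Polyak--\L{}ojasiewicz-type estimate along the flow: $|\dot\gamma|_s^2\ge 2\lambda\,\phi(s)$ for a.e.\ $s>0$. To prove it, fix $r>s$ and apply \eqref{eq:EVI} at time $s$ with the (admissible) test point $x=\gamma_r$, noting $\gamma_r\in D(\e)$ and $\D_X(\gamma_r,\mathrm{Im}(\gamma))=0$. Using the elementary bound $\tfrac{\d^+}{\d u}\big|_{u=s}\D_X^2(\gamma_u,\gamma_r)\ge -2\,\D_X(\gamma_s,\gamma_r)\,|\dot\gamma|_s$, valid for a.e.\ $s$ (it follows from $1$-Lipschitzness of $\D_X(\cdot,\gamma_r)$ together with the definition of $|\dot\gamma|$), and inserting it into \eqref{eq:EVI} gives
\[
\e(\gamma_s)-\e(\gamma_r)+\tfrac{\lambda}{2}\,\D_X^2(\gamma_s,\gamma_r)\le\D_X(\gamma_s,\gamma_r)\,|\dot\gamma|_s
\]
for a.e.\ $s$ and every $r>s$ (the null set of bad $s$ may be chosen independent of $r$: both sides are continuous in $r$, so it is enough to impose the bound for $r$ ranging over a countable dense set). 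Fix such an $s$ with $\phi(s)>0$. Then $\gamma$ cannot be eventually constant equal to $\gamma_s$, for otherwise $\e(\gamma_r)=\e(\gamma_s)$ for all large $r$ and hence $\phi(s)=0$; so there is a sequence $r_n\to\infty$ with $\D_X(\gamma_s,\gamma_{r_n})>0$. Dividing the displayed inequality by $\D_X(\gamma_s,\gamma_{r_n})$, using $a/b+(\lambda/2)b\ge\sqrt{2\lambda a}$ for $a\ge 0$, $b>0$, and letting $n\to\infty$ so that $\e(\gamma_{r_n})\to\l$, we obtain $|\dot\gamma|_s\ge\sqrt{2\lambda\,\phi(s)}$. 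The remaining (null set of) points $s$ have $\phi(s)=0$, hence $\phi\equiv 0$ on $[s,\infty)$ (as $\phi$ is nonincreasing and nonnegative), hence $|\dot\gamma|=0$ a.e.\ there by \eqref{eq:energyidentity}, so the estimate is trivial.

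To conclude, combine the two facts. At a.e.\ $s$ with $\phi(s)>0$, using $-\phi'(s)=|\dot\gamma|_s^2\ge 2\lambda\,\phi(s)$,
\[
|\dot\gamma|_s=\frac{-\phi'(s)}{|\dot\gamma|_s}\le\frac{-\phi'(s)}{\sqrt{2\lambda\,\phi(s)}}=\frac{1}{\sqrt{2\lambda}}\Big(\!-\frac{\d}{\d s}\,2\sqrt{\phi(s)}\Big),
\]
while $|\dot\gamma|_s=0$ for a.e.\ $s$ with $\phi(s)=0$. Integrating over $[t,\infty)$ and using that $2\sqrt{\phi}$ is nonincreasing with limit $0$ at $+\infty$ (a routine truncation handles the points where $\phi$ vanishes, at which $\sqrt\phi$ need not be absolutely continuous), we get
\[
\int_t^\infty|\dot\gamma|_s\,\d s\le\frac{1}{\sqrt{2\lambda}}\cdot 2\sqrt{\phi(t)}=\sqrt{\tfrac{2}{\lambda}\big(\e(\gamma_t)-\l\big)}\le\sqrt{\tfrac{2}{\lambda}\big(\e(\gamma_t)-\inf_X\e\big)},
\]
which is the claim.

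I expect the Polyak--\L{}ojasiewicz estimate $|\dot\gamma|_s^2\ge 2\lambda\,\phi(s)$ to be the only real obstacle: in a geodesic space it follows immediately by testing the local slope along a geodesic to the minimizer, but here $(X,\D_X)$ is merely complete, so instead I use the tail of the flow curve itself as a family of almost-minimizing test points in \eqref{eq:EVI}. (Alternatively one could quote the known exponential energy decay $\e(\gamma_s)-\inf_X\e\le e^{-2\lambda(s-\sigma)}\big(\e(\gamma_\sigma)-\inf_X\e\big)$ for $\mathrm{EVI}_\lambda$-flows with $\lambda>0$, which is equivalent.) The two minor technical points—passing from the pointwise-in-time form of \eqref{eq:EVI} to an a.e.-in-$s$, all-$r$ statement, and the derivative bound for $\D_X^2(\gamma_\cdot,\gamma_r)$—are routine.
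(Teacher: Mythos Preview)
Your argument is correct and is essentially the paper's second proof: both hinge on the Polyak--\L{}ojasiewicz/log-Sobolev-type bound $|\dot\gamma|_s^2\ge 2\lambda\big(\e(\gamma_s)-\ell\big)$ combined with the energy identity, and both finish by integrating the resulting differential inequality for $\sqrt{\e(\gamma_s)-\ell}$. The only difference is that the paper simply cites this bound from \cite[(3.18a)]{muratori2020gradient}, whereas you rederive it from the $\mathrm{EVI}_\lambda$ inequality by using the tail points $\gamma_r$ as test points---a nice self-contained substitute for the citation.

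It is worth noting that the paper also gives a distinct first proof, which you do not reproduce: it applies the EVI invariance principle (Theorem~\ref{th:main}) to the one-dimensional submanifold $Y=\{\gamma_t:t\in(0,\infty]\}$, observes that the induced distance is $\D_Y(\gamma_a,\gamma_b)=\int_a^b|\dot\gamma|_s\,\d s$, and then reads off the claim from the metric Talagrand inequality of Proposition~\ref{pr:MetricTalagrand}. That route is conceptually different (it exhibits the proposition as a limiting case of the paper's general principle) but relies on more machinery; your direct approach is more elementary.
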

\begin{proof}
First proof: By \cite[Theorem 3.5]{muratori2020gradient}, $\gamma_t$ converges as $t\to\infty$ to the unique minimizer $\gamma_\infty$ of $\e$. Define $Y = \{\gamma_t : t \in (0,\infty]\}$.
By construction, $\gamma_t \in Y$ for all $t$, and so Theorem \ref{th:main} applies, and $\e$ is geodesically $\lambda$-convex on $Y$. Moreover, $(Y,\D_Y)$ is a geodesic space, and the distance is easily seen to be given by
\begin{equation}
\D_Y(\gamma_{a},\gamma_{b}) = \int_{a}^{b}|\dot\gamma|_s \,\d s,
\end{equation}
for $0 \le a < b \le \infty$. The claim now follows from Proposition \ref{pr:MetricTalagrand}.

Second proof: We use the log-Sobolev-type inequality $\e(\gamma_t)-\e(\gamma_\infty) \le \frac{1}{2\lambda}|\partial\e|^2(\gamma_t)$, shown at this level of generality in \cite[(3.18a)]{muratori2020gradient}. Combining it with the energy identity \eqref{eq:energyidentity},
\begin{align}
\frac{\d}{\d t}\sqrt{\e(\gamma_t)-\e(\gamma_\infty)} &= -\frac{|\dot\gamma|_t^2}{2\sqrt{\e(\gamma_t)-\e(\gamma_\infty)}} \le \sqrt{\frac{\lambda}{2}} \frac{|\dot\gamma|_t^2}{|\partial\e|(\gamma_t)} = \sqrt{\frac{2}{\lambda}}|\dot\gamma|_t.
\end{align}
Integrate from $t$ to $\infty$ to complete the proof.
\end{proof}

\subsection{HWI and log-Sobolev inequalities} \label{se:HWI-metric}

Stronger than the Talagrand-type inequality of Proposition \ref{pr:MetricTalagrand} is the following metric space form of the HWI inequality.

\begin{proposition} \label{pr:HWI}
Let $\lambda \in \R$.
Let $(X,\D_X)$ be a metric space, and let $\e : X \to (-\infty,\infty]$. Suppose $Y \subset X$ is such that $(Y,\D_Y)$ is a geodesic space. Assume that $\e$ is geodesically $\lambda$-convex in $Y$, and that $\e$ satisfies the following HWI inequality in $(Y,\D_X|_{Y \times Y})$:
\begin{equation}
\e ( x )  \leq \e ( x') + \D_X ( x,x' ) \vert \partial \e  \vert( x ) - \frac{\lambda}{2} \D^2_X ( x, x' ), \quad \forall x \in Y \cap D(\vert \partial \e  \vert), \ x' \in Y.   \label{HWI-X}
\end{equation}
Then $\e$ satisfies the HWI inequality in $(Y,\D_Y)$:
\begin{equation}
\e ( y )  \leq \e ( y') + \D_Y ( y,y' ) \vert \partial \e  \vert( y ) - \frac{\lambda}{2} \D^2_Y ( y, y' ), \quad \forall y \in Y \cap  D(\vert \partial \e  \vert), \ y' \in Y.    \label{HWI-Y}
\end{equation}
\end{proposition}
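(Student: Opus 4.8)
\emph{Proof proposal.} The plan is to transplant the classical Otto--Villani derivation of the HWI inequality from geodesic convexity into the induced length metric $(Y,\D_Y)$, using the hypothesis \eqref{HWI-X} to supply the needed infinitesimal estimate in place of a direct computation of the metric slope. Fix $y\in Y\cap D(|\partial\e|)$ and $y'\in Y$. We may assume $\e(y')<\infty$, since otherwise the right-hand side of \eqref{HWI-Y} equals $+\infty$ and there is nothing to prove; note also that $y\in D(|\partial\e|)$ forces $\e(y)<\infty$. Since $(Y,\D_Y)$ is a geodesic space, choose a $Y$-geodesic $(\gamma_t)_{t\in[0,1]}$ from $y$ to $y'$, so that $\D_Y(\gamma_0,\gamma_t)=t\,\D_Y(y,y')<\infty$ for all $t$.

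The first step is to invoke the geodesic $\lambda$-convexity of $\e$ in $Y$ along $(\gamma_t)$, which in particular gives $\e(\gamma_t)<\infty$ for $t\in(0,1)$ and, after rearranging, the lower bound
\[
\e(y)-\e(\gamma_t)\ \ge\ t\big(\e(y)-\e(y')\big)+\tfrac{\lambda}{2}\,t(1-t)\,\D_Y^2(y,y'),\qquad t\in(0,1).
\]
The second step is to apply \eqref{HWI-X} with $x=y$, which lies in $Y\cap D(|\partial\e|)$, and $x'=\gamma_t\in Y$, yielding the matching upper bound
\[
\e(y)-\e(\gamma_t)\ \le\ \D_X(y,\gamma_t)\,|\partial\e|(y)-\tfrac{\lambda}{2}\,\D_X^2(y,\gamma_t),\qquad t\in(0,1).
\]
Chaining these two inequalities, dividing by $t>0$, and letting $t\downarrow 0$ will produce exactly \eqref{HWI-Y}. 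Observe that neither step uses any continuity or lower semicontinuity of $\e$ along $\gamma$: both inequalities hold for each fixed $t$ directly from the hypotheses, and one only takes limits of explicit functions of $t$ at the end.

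The one place that requires a little care is the passage to the limit on the right-hand side of the chained inequality after dividing by $t$. Here I would use that $\D_X(y,\gamma_t)\le \D_Y(y,\gamma_t)=t\,\D_Y(y,y')$, which holds because $\D_Y\ge \D_X$ on $Y\times Y$ and $\gamma$ is a $\D_Y$-geodesic: the ``gradient'' term then obeys $\D_X(y,\gamma_t)\,|\partial\e|(y)/t\le \D_Y(y,y')\,|\partial\e|(y)$, while the quadratic term contributes nothing in the limit since $\D_X^2(y,\gamma_t)/t\le t\,\D_Y^2(y,y')\to 0$, so its limsup contribution is at most $0$ regardless of the sign of $\lambda$; meanwhile the left-hand side converges to $\e(y)-\e(y')+\tfrac{\lambda}{2}\D_Y^2(y,y')$. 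I do not anticipate a genuine obstacle here: this is the metric-space HWI argument carried out verbatim in the length metric, with \eqref{HWI-X} playing the role of the slope inequality, which has the pleasant side effect of avoiding any need to compare the metric slope of $\e$ with respect to $\D_Y$ against the one with respect to $\D_X$.
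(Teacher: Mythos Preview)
Your proof is correct and follows essentially the same approach as the paper's: take a $Y$-geodesic $(\gamma_t)$ from $y$ to $y'$, combine the geodesic $\lambda$-convexity inequality in $(Y,\D_Y)$ with the assumed HWI inequality \eqref{HWI-X} applied to the pair $(y,\gamma_t)$, use $\D_X(y,\gamma_t)\le \D_Y(y,\gamma_t)=t\,\D_Y(y,y')$, divide by $t$, and send $t\downarrow 0$. The paper's presentation differs only cosmetically in how the two inequalities are combined before passing to the limit.
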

\begin{proof}
Let $(\gamma_t)_{t \in [0,1]}$ be a $Y$-geodesic with $\gamma_0 \in  D(\vert \partial \e  \vert)$.
Let $t \in (0,1)$.
The HWI inequality \eqref{HWI-X} in $X$ applied to $(x,x')=(\gamma_0,\gamma_t)$ gives 
\[ 
\e ( \gamma_0 ) \le \e ( \gamma_t ) + \D_X ( \gamma_0, \gamma_t ) \vert \partial \e \vert( \gamma_0 )  - \frac{\lambda}{2} \D^2_X ( \gamma_0, \gamma_t ).
\]
Moreover, $\lambda$-convexity of $\e$ in $Y$ entails that
\[
\e(\gamma_t) \leq (1-t) \e( \gamma_0) + t \e(\gamma_1) - \frac{\lambda}{2} t(1-t) \D_Y^2 (\gamma_0,\gamma_1).
\] 
Combining these inequalities yields
\begin{equation}
t \e ( \gamma_0 )  \le t \e ( \gamma_1 ) + \D_X ( \gamma_0, \gamma_t ) \vert \partial \e \vert( \gamma_0 )  - \frac{\lambda}{2} \D^2_X ( \gamma_0, \gamma_t ) - \frac{\lambda}{2} t ( 1-t) \D^2_Y(\gamma_0,\gamma_1). \label{pf:HWI-1}
\end{equation}
We then notice that
\[ \D_X ( \gamma_0, \gamma_t ) \leq \D_Y ( \gamma_0, \gamma_t ) = t \D_Y ( \gamma_0,\gamma_1 ), \]
so that dividing by $t$ in \eqref{pf:HWI-1} and sending $t \rightarrow 0$ completes the proof.
\end{proof}

In the context of Wasserstein space, the functionals $\D_X$, $\e$, and $\vert\partial\e\vert$ are typically $\W_2$, relative entropy $H(\cdot\,|\,\pi)$ with respect to a density for which $-\log\pi$ is $\lambda$-convex, and relative Fisher information $I(\cdot\,|\,\pi)$, hence the name \emph{HWI}.
The HWI inequality in this context was discovered by Otto and Villani \cite{otto2000generalization}.
It implies both the Talagrand inequality and the log-Sobolev inequality when $\lambda > 0$. Indeed, if $\e$ is bounded from below on $X$ and $\lambda > 0$, then \eqref{HWI-X} implies 
\begin{equation}
\e(x) -  \inf_{x' \in X}\e(x') \le \frac{1}{2\lambda}\vert \partial \e \vert^2(x), \quad \forall x \in X. \label{ineq:LSI-X}
\end{equation}
The inequality \eqref{ineq:LSI-X} is also known to follow directly from the existence of EVI flows \cite[(3.18a)]{muratori2020gradient}.
In the Wasserstein space setting, the inequality \eqref{ineq:LSI-X} is precisely the log-Sobolev inequality of Bakry-\'Emery.
Although the EVI invariance principle leads to strengthened versions of the Talagrand and HWI inequalities, the log-Sobolev cannot be strengthened under the assumptions of Theorem \ref{th:main}. This is because, when the EVI-flow leaves $Y$ invariant, the metric slope of $\e|_Y$ defined intrinsically in $Y$ coincides with the restriction of $\vert\partial\e\vert$ to $Y$:

\begin{proposition} \label{pr:slopes}
Suppose the assumptions of Theorem \ref{th:main} hold. Suppose also that $(X,\D_X)$ and $(Y,\D_Y)$ are genuine (non-extended) metric spaces. Then, for all $y \in Y$,
\[
\vert \partial \e \vert(y) =  \limsup_{y' \rightarrow y, \ y' \in Y} \frac{(\e(y) − \e(y'))_+}{\D_Y(y,y')} = \limsup_{y' \rightarrow y, \ y' \in Y} \frac{(\e(y) − \e(y'))_+}{\D_X(y,y')}.
\]
\end{proposition}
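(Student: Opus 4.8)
\emph{Setup and the easy inequalities.} Write $s_X(y) = \vert\partial\e\vert(y)$ for the ambient slope, and let $s_{X,Y}(y)$ and $s_Y(y)$ denote the middle and right-hand quantities in the statement, i.e.\ the $\limsup$'s over $y' \to y$ with $y' \in Y$, taken with the metric $\D_X$ and with the metric $\D_Y$ respectively. By the slope conventions of Section~\ref{se:metric} all three equal $+\infty$ when $\e(y) = \infty$, so we may assume $y \in Y \cap D(\e)$. The plan is to prove the chain $s_Y(y) \le s_{X,Y}(y) \le s_X(y) \le s_Y(y)$. Two of these inequalities are immediate: $s_{X,Y}(y) \le s_X(y)$, since a $\limsup$ over $y' \in Y$ (with the same metric $\D_X$) cannot exceed the $\limsup$ over all $y' \in X$; and $s_Y(y) \le s_{X,Y}(y)$, since $\D_X \le \D_Y$ on $Y \times Y$ while the numerators $(\e(y)-\e(y'))_+$ are nonnegative. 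It therefore remains to prove $s_X(y) \le s_Y(y)$, and we may assume $\vert\partial\e\vert(y) \in (0,\infty]$, as there is nothing to prove when $\vert\partial\e\vert(y) = 0$.

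\emph{The flow and its properties.} For this I would use the curve $\gamma = \gamma^y$ supplied by the hypothesis. By Theorem~\ref{th:main} it is an $\mathrm{EVI}_\lambda$-flow of $\e$ both in $(X,\D_X)$ and in $(Y,\D_Y)$, starting from $y$ in each; in particular $\D_Y(\gamma_t,y) \to 0$ as $t \downarrow 0$, and $\e(\gamma_t) \to \e(y)$ (the bound $\liminf_{t\downarrow 0}\e(\gamma_t) \ge \e(y)$ is part of ``starting at $y$'', while $\e(\gamma_t) \le \e(y)$ follows from the estimate $\tfrac12\D_Y^2(\gamma_t,y) \le t\,(\e(y)-\e(\gamma_t))$ already recorded in the proof of Theorem~\ref{th:main}). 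Integrating the energy identity \eqref{eq:energyidentity} -- valid in each of the two metrics since $\gamma$ is an EVI flow in each -- and using $\e(\gamma_t)\to\e(y)$ and that $\e$ is bounded below, one obtains, for every $t > 0$,
\[
\e(y) - \e(\gamma_t) = \int_0^t \big(\vert\dot\gamma\vert^X_s\big)^2\,\d s = \int_0^t \big(\vert\dot\gamma\vert^Y_s\big)^2\,\d s = \int_0^t \vert\partial\e\vert^2(\gamma_s)\,\d s < \infty ,
\]
where $\vert\dot\gamma\vert^X,\vert\dot\gamma\vert^Y$ are the metric derivatives in the two metrics, the middle two integrands agree a.e.\ because both equal $-\tfrac{\d}{\d t}\e(\gamma_t)$, and the last equality uses $\vert\dot\gamma\vert^X_s = \vert\partial\e\vert(\gamma_s)$ a.e. The one substantive input is the regularizing effect of EVI flows: since $\gamma$ starts at $y \in D(\e)$, one has $\vert\partial\e\vert(\gamma_t) \to \vert\partial\e\vert(y)$ as $t \downarrow 0$, which is part of \cite[Theorem~3.5]{muratori2020gradient}.

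\emph{Conclusion.} By Cauchy--Schwarz and the displayed identity, for every $t > 0$,
\[
\D_Y(y,\gamma_t) \le \int_0^t \vert\dot\gamma\vert^Y_s\,\d s \le \Big( t \int_0^t \big(\vert\dot\gamma\vert^Y_s\big)^2\,\d s \Big)^{1/2} = \big( t\,(\e(y)-\e(\gamma_t)) \big)^{1/2} .
\]
Since $\vert\partial\e\vert(y) > 0$, the regularizing property forces $\vert\partial\e\vert(\gamma_s) > 0$ for all small $s > 0$, hence $\e(\gamma_t) < \e(y)$ and $\gamma_t \ne y$ for all small $t > 0$; thus $0 < \D_Y(y,\gamma_t) < \infty$, and combining the two preceding displays,
\[
\frac{(\e(y)-\e(\gamma_t))_+}{\D_Y(y,\gamma_t)} \;\ge\; \Big(\frac{\e(y)-\e(\gamma_t)}{t}\Big)^{1/2} \;=\; \Big( \frac{1}{t} \int_0^t \vert\partial\e\vert^2(\gamma_s)\,\d s \Big)^{1/2} .
\]
The right-hand side tends to $\vert\partial\e\vert(y)$ as $t \downarrow 0$, once more by the regularizing property, so
\[
s_Y(y) \;\ge\; \limsup_{t\downarrow 0}\,\frac{(\e(y)-\e(\gamma_t))_+}{\D_Y(y,\gamma_t)} \;\ge\; \vert\partial\e\vert(y) = s_X(y) ,
\]
the first inequality because $\gamma_t \to y$ in $(Y,\D_Y)$ with $\gamma_t \ne y$ for small $t$. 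This closes the chain $s_Y(y) \le s_{X,Y}(y) \le s_X(y) \le s_Y(y)$, proving all three quantities equal.

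\emph{Main obstacle.} The crux is the regularizing property $\vert\partial\e\vert(\gamma_t) \to \vert\partial\e\vert(y)$ of the slope along the EVI flow as $t \downarrow 0$: it is this, rather than anything about the geometry of $Y$, that forces the intrinsic slope of $\e|_Y$ in $(Y,\D_Y)$ to agree with the ambient one, the remaining steps being routine manipulation of Cauchy--Schwarz and the energy identity. If one preferred not to invoke \cite[Theorem~3.5]{muratori2020gradient} for it, the one-sided bound $\liminf_{t\downarrow 0}\vert\partial\e\vert(\gamma_t) \ge \vert\partial\e\vert(y)$ that is actually used would follow from lower semicontinuity of the slope of $\e$ on $X$; but establishing that lower semicontinuity without assuming $X$ geodesic takes some care, so citing the EVI regularization result is the cleaner route.
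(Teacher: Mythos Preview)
Your proof is correct and follows essentially the same route as the paper's: establish the easy chain $s_Y \le s_{X,Y} \le |\partial\e|$, then close it by running the EVI flow (which lies in $Y$ and is $\mathrm{EVI}_\lambda$ in $(Y,\D_Y)$ by Theorem~\ref{th:main}) and invoking the $t=0$ energy identity \cite[(3.17)]{muratori2020gradient}---the paper applies that identity directly in $(Y,\D_Y)$ to get $s_Y(y)^2 = \lim_{h\downarrow 0}(\e(y)-\e(\gamma_h))/h = |\partial\e|^2(y)$, while you reach the same limit through the Cauchy--Schwarz bound $\D_Y(y,\gamma_t)\le\sqrt{t(\e(y)-\e(\gamma_t))}$. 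One cosmetic slip: you swapped ``middle'' and ``right-hand'' when introducing $s_{X,Y}$ and $s_Y$ (the middle expression in the statement uses $\D_Y$, the right-hand one uses $\D_X$), though your subsequent reasoning is internally consistent.
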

\begin{proof}
Write $F(y)$ and $G(y)$ for the second and third expressions. Then $F \le G$ holds because $\D_Y \ge \D_X$, and $G\le \vert \partial \e \vert$ on $Y$ as an immediate consequence of the definitions. We must only prove $F\ge \vert \partial \e \vert$ on $Y$. Fix $y \in Y$ with $F(y) < \infty$. Let $(\gamma_t)_{t > 0}$ be an $\mathrm{EVI}_\lambda$-flow of $\e$ in $(X,\D_X)$, starting from $y \in Y$.
We then apply the energy identity for $\mathrm{EVI}_\lambda$-flows at time $t=0$, as stated in   \cite[Theorem 3.5, equation (3.17)]{muratori2020gradient}:
\begin{equation}
\vert \partial \e \vert^2(y) = - \lim_{h\downarrow 0}\frac{\e(\gamma_{h})-\e(y)}{h}. \label{pf:slopes}
\end{equation}
Because $(\gamma_t)_{t > 0}$ is contained in $Y$, it is also an $\mathrm{EVI}_\lambda$-flow of $\e$ in $(Y,\D_X|_{Y \times Y})$ starting from $y$.
Apply the energy identity again to deduce that the right-hand side of \eqref{pf:slopes} equals $F(y)^2$.
\end{proof}

\section{Existence of geodesics} \label{se:geodesics}

In this section we give sufficient conditions for a subset $Y \subset X$ with its induced length distance to be a geodesic space. We first give a straightforward general condition in the metric space setting (Proposition \ref{pro:geodesicY}), which is powerful but  not quite strong enough to cover some of our main examples such as the Carlen-Gangbo spheres. We then give a more novel condition (Theorem \ref{thm:dualExisF}), tailored to sets in Wasserstein space defined by linear constraints.
Both results require, as an assumption, that two points in the submanifold $Y$ can be connected by some finite-length curve contained in $Y$, and in Section \ref{se:finitecurves} we give two sufficient conditions for this in the setting of the EVI invariance principle.

\subsection{Compactness criteria} \label{ssec:GeoMetric}

In this section we give a compactness-based  sufficient condition for $Y$ to be a geodesic space.

\begin{proposition} \label{pro:geodesicY}
Assume that $(X,\D_X)$ is a complete metric space, and assume there exists a Hausdorff topology $\sigma$ on $X$ satisfying:
\begin{enumerate}[(1)]
\item $\D_X$-bounded sets are sequentially $\sigma$-compact. 
\item $\D_X$ is $\sigma \times \sigma$-sequentially lower semicontinuous.
\item $Y$ is a $\sigma$-closed subset of $X$.
\end{enumerate}
If $\D_Y$ is everywhere finite on $Y \times Y$, then $(Y,\D_Y)$ is a complete geodesic space. 
\end{proposition}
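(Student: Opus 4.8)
I would run the direct method: realize a $Y$-geodesic between $y,y'\in Y$ as a limit of a length-minimizing sequence of curves, and prove completeness of $(Y,\D_Y)$ by the same mechanism. The whole argument rests on one compactness statement for curves, so I would isolate it. Before that I would record a preliminary observation: \emph{$\D_X$-convergence implies $\sigma$-convergence, hence $Y$ is $\D_X$-closed.} Indeed, if $z_k\to z$ in $\D_X$ then $(z_k)$ is $\D_X$-bounded, so by (1) every subsequence has a further subsequence $\sigma$-converging to some $z'$; applying (2) to the pairs $(z_{k_j},z_{k_i})$, letting $j\to\infty$ and then $i\to\infty$, and using the triangle inequality for $\D_X$ together with $\D_X(z_{k_i},z)\to 0$, forces $z'=z$. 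Since $(X,\sigma)$ is Hausdorff and this holds for all subsequences, $z_k\to z$ in $\sigma$; combined with (3), $Y$ is $\D_X$-closed. (I am using that $\D_X$ is a genuine metric on any $\D_X$-bounded set, so it separates points.)

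\textbf{Curve compactness lemma.} Let $\gamma^n:[0,1]\to Y$ be curves that are $M$-Lipschitz for $\D_X$ with a common $M$ (achieved, for any rectifiable family, after constant-speed reparametrization, cf.\ \cite[Lemma 1.1.4(b)]{ambrosio2008gradient}), with $\ell(\gamma^n)\to L$. I claim a subsequence converges pointwise in $\sigma$ to a curve $\gamma:[0,1]\to Y$ with the limiting endpoints and $\ell(\gamma)\le L$. The proof is the classical Arzelà--Ascoli argument adapted to the weak topology: for each $t$, $\{\gamma^n_t\}_n$ sits in the $\D_X$-ball of radius $M$ about $\gamma^n_0$, which is $\sigma$-sequentially compact by (1); a diagonal extraction over the rationals of $[0,1]$ yields a subsequence with $\gamma^n_q\to\gamma_q\in Y$ in $\sigma$ for every rational $q$; (2) gives $\D_X(\gamma_q,\gamma_{q'})\le M|q-q'|$ and, summing over a rational partition and using superadditivity of $\liminf$, $\sum_i\D_X(\gamma_{t_{i-1}},\gamma_{t_i})\le\liminf_n\ell(\gamma^n)=L$; since $\gamma$ restricted to the rationals is then Lipschitz into the complete space $(X,\D_X)$, it extends to a Lipschitz curve $\gamma:[0,1]\to X$, which lies in $Y$ by the preliminary observation (each $\gamma_t$ is a $\D_X$-limit of points $\gamma_q\in Y$), and the bound $\ell(\gamma)\le L$ passes to all partitions by continuity of $\D_X$ along $\gamma$.

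\textbf{Geodesics and completeness.} Given $y,y'\in Y$, the hypothesis gives $D:=\D_Y(y,y')<\infty$; take a minimizing sequence of curves in $Y$ joining them, reparametrized to constant speed $L_n\to D$, and apply the lemma with $M=D+1$. The limit curve lies in $Y$, joins $y$ to $y'$ (endpoints $0,1$ being rational), and has length $\le D$; hence its length equals $D$, so it is a shortest curve in $Y$, and reparametrizing it by arc length and invoking the triangle inequality for $\D_Y$ to get the matching lower bound $\D_Y(\gamma_s,\gamma_t)\ge|t-s|D$ shows it is a $Y$-geodesic. For completeness, let $(y_k)$ be $\D_Y$-Cauchy; since $\D_X\le\D_Y$ it is $\D_X$-Cauchy, hence $\D_X$-converges to some $z$, and $z\in Y$ because $Y$ is $\D_X$-closed. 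To upgrade to $\D_Y$-convergence, fix $\varepsilon>0$, pick $N$ with $\D_Y(y_N,y_j)<\varepsilon$ for $j\ge N$, choose curves in $Y$ from $y_N$ to $y_j$ of length $<\varepsilon$, reparametrize to constant speed, and apply the lemma: the limit curve joins $y_N$ to $\lim_j y_j=z$ and has length $\le\varepsilon$, whence $\D_Y(y_N,z)\le\varepsilon$ and $\D_Y(y_j,z)\le2\varepsilon$ for $j\ge N$.

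\textbf{Main obstacle.} The only genuinely delicate point is the curve compactness lemma, namely reconciling the two topologies: the curves are equi-Lipschitz only for $\D_X$ while compactness holds only for $\sigma$, and $\D_X$ is merely $\sigma$-lower semicontinuous rather than $\sigma$-continuous. The ``extract on a countable dense set, then complete'' device, together with the preliminary fact that $\D_X$-limits are $\sigma$-limits (so that the extended curve remains in $Y$), is exactly what closes this gap; the rest is bookkeeping.
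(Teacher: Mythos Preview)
Your argument is correct and follows essentially the same route as the paper: extract a limit curve from a minimizing sequence via an Arzel\`a--Ascoli-type compactness, use $\sigma$-lower semicontinuity of $\D_X$ to control lengths, and pass to a geodesic. The only difference is that the paper invokes \cite[Proposition 3.3.1]{ambrosio2008gradient} as a black box for the compactness step, whereas you reprove it by hand (diagonal extraction on rationals, Lipschitz extension via completeness); your completeness argument is also spelled out more explicitly than the paper's, which instead notes that $\D_Y$ is $\sigma$-lower semicontinuous on bounded sets and concludes from that.
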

\begin{proof}
Let $x,y\in Y$. 
By assumption, the formula defining $\D_Y(x,y)$ in \eqref{eq:DefDY} is finite. We will show that it is attained.
Let $( \gamma^k )_{k \geq 1}$ be a minimizing sequence such that
\[
\int_0^1|\dot\gamma^k|_t\,\d t \le \frac{1}{k} + \D_Y(x,y) =: L_k, \qquad \forall k \in \N.
\]
By reparametrization, we may assume that $\gamma^k$ is $L_k$-Lipschitz with respect to $\D_X$.
The assumptions on the topology $\sigma$ let us apply the refined Arzelà-Ascoli theorem \cite[Proposition 3.3.1]{ambrosio2008gradient}, to may find a continuous curve $\gamma: [0,1] \to (X,\D_X)$ such that 
\[ \gamma^k_t \xrightarrow[k \rightarrow +\infty]{\sigma} \gamma_t, \qquad \forall t \in [0,1], \]
up to extracting a subsequence. This curve $\gamma$ must lie in $Y$ because $Y$ is $\sigma$-closed. By the $\sigma\times\sigma$-sequential lower semicontinuity of $\D_X$, $\gamma$ must be Lipschitz with respect to $\D_X$, with constant $L:=\D_Y(x,y)$. Hence, $|\dot\gamma|_t \le L$ a.e., and we get
\[
\int_0^1|\dot\gamma|_t\,\d t \le L=\D_Y(x,y).
\]
This must be equality, and we see that $\gamma$ is a minimizer for \eqref{eq:DefDY}. Since $|\dot\gamma|_t \le L$ a.e., we must have  $|\dot\gamma|_t=L$ a.e. Hence, 
\[
\D_Y(\gamma_s,\gamma_t) \le \int_s^t|\dot\gamma|_u\,\d u \le L(t-s),
\]
and we deduce that $\gamma$ is a $Y$-geodesic from $x$ to $y$.

Repeating the above argument but with variable endpoints $(x,y)$ shows that $\D_Y$ is $\sigma_Y\times\sigma_Y$-sequentially lower semicontinuous on $\D_Y \times \D_Y$-bounded sets, where $\sigma_Y$ is the subspace topology.
As $(X,\D_X)$ is complete and $\D_X \vert_{Y \times Y} \leq \D_Y$, it follows readily that $(Y,\D_Y)$ is complete.
\end{proof}

The main example for Proposition \ref{pro:geodesicY} is when $(X,\D_X) = ( \ps_2 ( \R^d) , \W_2)$ and $Y=\F^\perp$ is defined as in \eqref{intro:Gperp} for some given set $\F$ of continuous functions with polynomial growth of order at most $p \in [1,2)$.
As suitable topology $\sigma$ is then the one generated by the Wasserstein distance $\W_p$. 
Since $p=2$ is excluded, this does not cover the Carlen-Gangbo sphere. This motivates the developments of the next section.

\subsection{Linear constraints on Wasserstein space} \label{ssec:GeoW2}

Let us briefly recall the standard definition of $p$-Wasserstein distance.
Consider a separable metric space $(X,\D_X)$, $x_0 \in X$, and $p \in [1,\infty)$. Let $\ps(X)$ denote the set of Borel probability measures on $X$, 
equipped with the usual topology of weak convergence (dual to bounded continuous functions). Let $\ps_p(X)$ denote the set of  $\mu \in \ps(X)$ with $\D_X(\cdot,x_0) \in L^p(\mu)$. The $p$-Wasserstein distance $\W_p$ is defined on $\ps_p(X)$ by
\[
\W_p(\mu,\nu) := \inf_\pi \bigg(\int_{X \times X}\D_X^p(x,y)\,\pi(\d x,\d y)\bigg)^{1/p},
\]
where the infimum is over couplings, i.e., probability measures $\pi$ on $X \times X$ satisfying $\pi(\cdot \times X) = \mu$ and $\pi(X \times \cdot) = \nu$.

Let $\phi : \R^d \rightarrow \R_+$ be a continuous function. 
Let $C(\R^d)$ be the set of continuous real-valued functions on $\R^d$. 
Define  the normed vector space
\[ C_\phi ( \R^d ) := \bigg\{ f \in C ( \R^d ) : \lVert f \rVert_\phi := \sup_{x \in \R^d} \frac{\vert f(x) \vert}{1 + \phi (x)} <  \infty \bigg\}. \]
It is immediate to check that $(C_\phi ( \R^d ), \lVert \cdot \rVert_\phi)$ is a Banach space.
We further introduce
\[ \ps_\phi ( \R^d) := \bigg\{ \rho \in \ps ( \R^d ) :  \int_{\R^d} \phi \, \d \rho <  \infty \bigg\}. \]
For a set $\F \subset C_\phi ( \R^d )$, we define
\begin{equation}
\F^\perp := \bigg\{ \rho \in \ps_\phi(\R^d) : \int_{\R^d} f\,\d \rho =0 , \ \forall f \in \F\bigg\}, \label{def:Fperp-pfsection}
\end{equation}
as well as
\begin{equation} \label{eq:W2F}
\W^2_\F (\rho_{\mathrm{i}},\rho_{\mathrm{f}}) := \inf \int_{0}^1 \int_{\R^d} \vert v_t \vert^2 \,\d \rho_t \d t,  \qquad \rho_{\mathrm{i}}, \rho_{\mathrm{f}} \in \ps_\phi (\R^d).
\end{equation}
where we minimize over $(\rho_t,v_t)_{0 \leq t \leq 1} \in C ( [0,1]; \ps ( \R^d )) \times L^2 ( [0,1] \times \R^d, \d t \rho_t(\d x))$ such that $(\rho_0,\rho_1)=(\rho_{\mathrm{i}},\rho_{\mathrm{f}})$, $\rho_t \in \F^\perp$ for all $t \in [0,1]$, and $\partial_t \rho_t + \nabla \cdot (\rho_t v_t ) = 0$ in  the sense of distributions.

\begin{theorem} \label{th:Fgeodesic}
Let $(X,\D_X)=(\ps_2(\R^d),\W_2)$. Let $\F \subset C_\phi ( \R^d )$ be such that  $\phi-c \in \F$ for some constant $c \in \R$, and such that $Y:=\F^\perp$ is nonempty.  Define $\D_Y$ as in \eqref{eq:DefDY}. Then $\W_\F \equiv \D_Y$. Moreover, if $\D_Y$ is finite on $Y\times Y$, then $Y$ is a geodesic space.
\end{theorem}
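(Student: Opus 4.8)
The plan is to treat the two assertions separately. For the metric identity $\W_\F\equiv\D_Y$ I would give a ``soft'' argument resting only on the classical correspondence between absolutely continuous curves in $(\ps_2(\R^d),\W_2)$ and distributional solutions of the continuity equation; neither compactness nor the hypothesis $\phi-c\in\F$ is needed here. With this identity in hand, the existence of geodesics reduces to attainment of the infimum in \eqref{eq:W2F}, and for that I would run a convex-duality argument in the spirit of Brenier \cite{brenier2020examples}: set up the constrained Benamou--Brenier problem, apply a Fenchel--Rockafellar theorem with the finiteness of $\D_Y$ serving as the qualification condition, extract a primal minimizer, and reparametrize it by arc length to obtain a $Y$-geodesic.

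For $\W_\F\equiv\D_Y$ I would prove two inequalities, both via the Ambrosio--Gigli--Savar\'e equivalence \cite[Theorem 8.3.1]{ambrosio2008gradient}: an $AC^2$ curve $(\rho_t)$ in $(\ps_2(\R^d),\W_2)$ is exactly a narrowly continuous distributional solution of $\partial_t\rho_t+\nabla\cdot(\rho_tv_t)=0$ with $\|v_t\|_{L^2(\rho_t)}\in L^2(0,1)$, and the minimal such field satisfies $\|v_t\|_{L^2(\rho_t)}=|\dot\rho|_t$ for a.e.\ $t$. Given $\gamma$ admissible for \eqref{eq:DefDY-squared} --- so $\gamma\in AC^2([0,1];(Y,\W_2))$, hence narrowly continuous, with $\gamma_t\in\F^\perp$ for all $t$ --- its minimal velocity field $v$ makes $(\gamma,v)$ admissible for \eqref{eq:W2F}, so that $\W_\F^2(\gamma_0,\gamma_1)\le\int_0^1\!\!\int|v_t|^2\,\d\gamma_t\,\d t=\int_0^1|\dot\gamma|_t^2\,\d t$; infimizing over $\gamma$ and recalling \eqref{eq:DefDY-squared} gives $\W_\F\le\D_Y$. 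Conversely, if $(\rho,v)$ is admissible for \eqref{eq:W2F} with finite action, then $\rho_t\in Y=\F^\perp\subseteq\ps_2(\R^d)$ for all $t$, so \cite[Theorem 8.3.1]{ambrosio2008gradient} makes $(\rho_t)$ an $AC^2$ curve in $\W_2$ with $|\dot\rho|_t\le\|v_t\|_{L^2(\rho_t)}$; this $\rho$ is admissible for \eqref{eq:DefDY-squared}, whence $\D_Y^2(\rho_0,\rho_1)\le\int_0^1|\dot\rho|_t^2\,\d t\le\int_0^1\!\!\int|v_t|^2\,\d\rho_t\,\d t$; infimizing over $(\rho,v)$ gives $\D_Y\le\W_\F$. (Off $Y\times Y$ both quantities are $+\infty$.) The hypothesis $\phi-c\in\F$ is used only to guarantee the uniform moment identity $\int\phi\,\d\rho_t=c$ along admissible curves, which enters the second part.

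For the geodesic-space claim, the assumption that $\D_Y$ is finite on $Y\times Y$ means, by the first part, that $\W_\F(\rho_{\mathrm i},\rho_{\mathrm f})<\infty$ for all $\rho_{\mathrm i},\rho_{\mathrm f}\in Y$, and it suffices to exhibit a minimizer $(\bar\rho,\bar v)$ of \eqref{eq:W2F}: by the first part $\bar\rho$ then minimizes \eqref{eq:DefDY-squared}, so the Cauchy--Schwarz equality case forces constant metric speed $|\dot{\bar\rho}|_t\equiv\D_Y(\rho_{\mathrm i},\rho_{\mathrm f})$, and the triangle inequality gives $\D_Y(\bar\rho_s,\bar\rho_t)=|t-s|\,\D_Y(\rho_{\mathrm i},\rho_{\mathrm f})$, i.e.\ $(\bar\rho_t)_{t\in[0,1]}$ is a $Y$-geodesic. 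To obtain such a minimizer I would pass to the momentum variable $E_t:=v_t\rho_t$ and minimize the convex, weak-$*$ lower semicontinuous Benamou--Brenier functional of the pair of measures $(\rho,E)$ on $[0,1]\times\R^d$ subject to the affine constraints: the continuity equation, the endpoints, and $\int f\,\d\rho_t=0$ for $f\in\F$. Following Brenier's adaptation of Fenchel--Rockafellar duality, one introduces a dual potential $\varphi(t,x)$ for the continuity equation (absorbing the endpoint constraints) and a multiplier $\lambda\in L^2\big([0,1];\F^{*}\big)$ for the constraints cutting out $\F^\perp$, using the standing hypothesis --- existence of at least one finite-action curve in $Y$ between the two endpoints --- as the qualification condition that rules out a duality gap; this yields both a dual formula for $\D_Y$ and, by the direct method now equipped with a priori bounds, the sought primal minimizer.

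I expect the main obstacle to be the passage to the limit in the constraints $\int f\,\d\rho_t=0$ for \emph{unbounded} $f\in C_\phi(\R^d)$ along a minimizing sequence: narrow convergence does not see $C_\phi(\R^d)$, which is exactly why the elementary compactness criterion of Proposition \ref{pro:geodesicY} fails at the scaling $\phi\sim|x|^2$ relevant to the Carlen--Gangbo sphere. Here the uniform moment identity $\int\phi\,\d\rho^n_t=c$ --- precisely what $\phi-c\in\F$ supplies --- must be exploited to secure the tightness and uniform integrability needed to transmit these \emph{equality} constraints to the limit, while the quantitative ($\W_2$-H\"older) equicontinuity of finite-action curves is needed to preserve the endpoints $\rho_{\mathrm i},\rho_{\mathrm f}$. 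Verifying strong duality in this setting, with infinitely many unbounded linear constraints, is the delicate point; by contrast no regularity of the multiplier $\lambda$ (the analogue of Brenier's pressure) is required for the existence of geodesics, only for finer properties.
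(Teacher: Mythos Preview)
Your first part, the identification $\W_\F\equiv\D_Y$ via \cite[Theorem 8.3.1]{ambrosio2008gradient}, is exactly what the paper does. Your overall strategy for the second part --- Brenier-style Fenchel--Rockafellar duality on the Benamou--Brenier action in momentum variables --- is also the paper's approach, carried out in the slightly more general Theorem \ref{thm:dualExisF}. So the architecture matches.

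Where you diverge is in the role of the hypothesis $\phi-c\in\F$. You locate it on the \emph{primal} side, as a uniform moment bound $\int\phi\,\d\rho^n_t=c$ to be used for tightness and for passing the unbounded constraints to the limit in a direct-method argument. The paper instead uses it purely on the \emph{dual} side, to verify the Fenchel--Rockafellar qualification condition \eqref{eq:Qualif}: with $g^0(t,x)=c-\phi(x)\in\mathrm{span}(\F)$ and $\varphi^0(t,x)=-(c+1)t$, one gets $2(\partial_t\varphi^0+g^0)+|\nabla\varphi^0|^2=-2(1+\phi)$, a \emph{strict} Hamilton--Jacobi subsolution with margin bounded below in the $C_\phi$ norm. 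This is the interior point that makes $K_2$ continuous where $K_1$ is finite, and FR then delivers primal attainment directly in the dual space $(C_\phi)^\star\times(C_{\sqrt\phi})^\star$ --- a space in which integrability against every $f\in\F$ is automatic, so no limit passage in the constraints is ever needed.

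Your proposed qualification (``existence of at least one finite-action curve'') is the wrong-sided condition: in the Brenier setup the FR primal variables are the test functions $(A,B)$, not the measures, and primal feasibility of the measure problem does not yield the continuity hypothesis FR requires. So either you switch to the paper's dual qualification, which is cleaner and explains precisely why $\phi-c\in\F$ is the natural hypothesis, or you commit fully to a direct-method argument --- but then you must confront exactly the obstacle you flag (weak limits need not preserve $\int\phi\,\d\rho_t=c$ at the borderline growth $\phi\sim|\cdot|^2$), and the paper's route sidesteps this entirely.
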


The main point of Theorem \ref{th:Fgeodesic} is the ``if-then'' statement. The problem of proving that $(\F^\perp,\W_F)=(Y,\D_Y)$ is a geodesic space is thus reduced to the much gentler problem of proving that it is a length space, i.e., that curves of finite length exist between any two points. This remaining task will be taken up in Section \ref{se:finitecurves} below.

Theorem \ref{th:Fgeodesic} will follow quickly from a more general one which results in a somewhat more readable proof, and for which we also provide a duality theorem.
With a slight abuse of notation, we identify $\phi$ with the function $(t,x) \mapsto \phi (x)$ to define the analogous Banach space $C_\phi ( [0,1] \times \R^d )$.
We will add superscripts to indicate smoothness in the usual way; mainly, $C_\phi^1 ( [0,1] \times \R^d )$ denotes the set of continuously differentiable functions in $C_\phi ( [0,1] \times \R^d )$.
For $\G \subset C_\phi ( [0,1] \times \R^d )$, we will write $\G^\perp$ for the set of Borel maps $[0,1] \ni t \mapsto \rho_t \in \ps(\R^d)$ such that $\G \subset L^1(\d t\rho_t(\d x))$ and
\[
\int_0^1\int_{\R^d}g(t,x)\,\rho_t(\d x)\d t = 0, \qquad \forall g \in \G.
\]
For any $\rho_{\mathrm{i}}, \rho_{\mathrm{f}} \in \ps_\phi (\R^d)$, we set 
\begin{equation} \label{eq:AbsInducedD}
\W^2_\G (\rho_{\mathrm{i}},\rho_{\mathrm{f}}) := \inf \int_{0}^1 \int_{\R^d} \vert v_t \vert^2 \,\d \rho_t \d t,  
\end{equation}
where we minimize over $(\rho_t,v_t)_{0 \leq t \leq 1} \in C ( [0,1]; \ps ( \R^d )) \times L^2 ( [0,1] \times \R^d, \d t  \rho_t(\d x))$ such that $(\rho_0,\rho_1)=(\rho_{\mathrm{i}},\rho_{\mathrm{f}})$, $(\rho_t)_{0 \leq t \leq 1} \in \G^\perp$, and
\begin{equation} \label{eq:minthmExis}
\partial_t \rho_t + \nabla \cdot (\rho_t v_t ) = 0, 
\end{equation} 
in the sense of distributions.

\begin{theorem}  \label{thm:dualExisF}
Let $\G \subset C_\phi ( [0,1] \times \R^d )$ be a linear space such that $\G^\perp$ is non-empty. Assume there exists $(\varphi^0,g^0) \in C^1_\phi ([0,1]\times\R^d;\R) \times \G$ such that $\nabla \varphi^0 \in C_{\sqrt{\phi}}([0,1]\times\R^d;\R^d)$ and 
\begin{equation} \label{eq:Qualif}
\sup_{(t,x) \in [0,1] \times \R^d} \frac{2 \big( \partial_t \varphi^0 (t,x) + g^0 (t,x) \big) + \vert \nabla \varphi^0 (t,x) \vert^2}{1+\phi(x)} < 0.
\end{equation}
Then, for any $\rho_{\mathrm{i}}, \rho_{\mathrm{f}} \in \ps_\phi (\R^d)$, 
\[ \frac{1}{2} \W^2_\G (\rho_{\mathrm{i}},\rho_{\mathrm{f}}) = \sup_{\substack{g \in \G, \, \varphi \in  C_\phi^1 ([0,1] \times \R^d) \\ \partial_t \varphi + \tfrac{1}{2} \vert \nabla \varphi \vert^2 + g \leq 0}} \int_{\R^d} \varphi (1,\cdot)\, \d \rho_{\mathrm{f}}  - \int_{\R^d} \varphi (0,\cdot) \,\d \rho_{\mathrm{i}}.\]
Moreover, a  minimizer in \eqref{eq:AbsInducedD} exists if $\W^2_\G (\rho_{\mathrm{i}},\rho_{\mathrm{f}})< \infty$.
\end{theorem}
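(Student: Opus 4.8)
\emph{Proof proposal.} The plan is to follow the Fenchel--Rockafellar strategy of Brenier \cite{brenier2020examples}, taking the supremum on the right-hand side to be the ``primal'' of an abstract Banach-space convex duality, so that the constrained Benamou--Brenier problem \eqref{eq:AbsInducedD} appears as its dual. I would work on $V := C^1_\phi([0,1]\times\R^d) \times \G$ (with $\G$ carrying the $\lVert\cdot\rVert_\phi$-subspace topology; here $C^1_\phi$ is understood with $\partial_t\varphi\in C_\phi$ and $\nabla\varphi\in C_{\sqrt\phi}^d$), on $W := C_\phi([0,1]\times\R^d)\times C_{\sqrt\phi}([0,1]\times\R^d;\R^d)$, and on the bounded linear operator $\Lambda:V\to W$, $\Lambda(\varphi,g) := (\partial_t\varphi+g,\ \nabla\varphi)$. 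Let $F:V\to\R$ be the linear functional $F(\varphi,g) := \int\varphi(0,\cdot)\,\d\rho_{\mathrm{i}} - \int\varphi(1,\cdot)\,\d\rho_{\mathrm{f}}$, which is finite and hence continuous since $\rho_{\mathrm{i}},\rho_{\mathrm{f}}\in\ps_\phi$, and let $G := \chi_{\widetilde K}$ be the convex indicator of $\widetilde K := \{(a,b)\in W : a + \tfrac12|b|^2\le 0 \text{ pointwise}\}$. Then $F(\varphi,g)+G(\Lambda(\varphi,g))$ equals $-\big[\int\varphi(1,\cdot)\,\d\rho_{\mathrm{f}}-\int\varphi(0,\cdot)\,\d\rho_{\mathrm{i}}\big]$ on the feasible set of the supremum and $+\infty$ off it, so $\inf_V(F+G\circ\Lambda)$ equals minus the right-hand side of the claimed identity.

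The crucial point is that \eqref{eq:Qualif} says exactly that $\Lambda(\varphi^0,g^0) = (\partial_t\varphi^0+g^0,\ \nabla\varphi^0)$ lies in the norm-interior of $\widetilde K$ in $W$: after dividing by $1+\phi$, the strict negativity is an open-ball condition for the weighted sup norms $\lVert\cdot\rVert_\phi$ and $\lVert\cdot\rVert_{\sqrt\phi}$ (and this is why the hypothesis asks for $\nabla\varphi^0\in C_{\sqrt\phi}$, so that $|\nabla\varphi^0|^2\in C_\phi$ and $\Lambda(\varphi^0,g^0)\in W$). Since $F$ is continuous on all of $V$ and $G$ is finite and continuous near $\Lambda(\varphi^0,g^0)$, the Fenchel--Rockafellar theorem yields no duality gap and attainment in the dual: $\inf_V(F+G\circ\Lambda) = \max_{p\in W^*}\{-F^*(\Lambda^*p)-G^*(-p)\}$.

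It then remains to identify the two conjugates. A pointwise optimization (first in $a$, then in $b$), combined with the standard interchange of conjugation and integration for integral functionals — using lower semicontinuity and superlinearity of $(a,b)\mapsto|b|^2/(2a)$, which also disposes of any ``mass at infinity'' component of $W^*$ — gives $G^*(-p) = \B(\rho,E) := \tfrac12\int|\d E/\d\rho|^2\,\d\rho$, the Benamou--Brenier functional, where $(\rho,E) := (-\mu,-\nu)$ for $p=(\mu,\nu)$, with the usual conventions ($=+\infty$ unless $\mu\le 0$ and $\nu\ll\mu$). Since $F$ is linear, $F^*(\Lambda^*p)\in\{0,+\infty\}$, being $0$ iff $\Lambda^*p$ represents $F$; testing the identity $\langle\Lambda^*p,(\varphi,g)\rangle = F(\varphi,g)$ against $\varphi\equiv0$ and $g\in\G$ forces $\rho\in\G^\perp$, and against arbitrary $\varphi$ it is precisely the weak formulation of $\partial_t\rho+\nabla\cdot E=0$ with $\rho_0=\rho_{\mathrm{i}}$, $\rho_1=\rho_{\mathrm{f}}$. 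Hence the attained maximum equals $-\inf\{\B(\rho,E) : (\rho,E)\text{ admissible}\} = -\tfrac12\W^2_\G(\rho_{\mathrm{i}},\rho_{\mathrm{f}})$, which is the claimed identity. If $\W^2_\G(\rho_{\mathrm{i}},\rho_{\mathrm{f}})<\infty$ the common value is finite, so a maximizer $(\rho,E)$ exists; disintegrating $\rho$ in time, $\B(\rho,E)<\infty$ forces $E=v\,\rho\otimes\d t$ with $v\in L^2(\d t\,\rho_t(\d x))$, the continuity equation gives each $\rho_t$ total mass one, and $\rho$ admits a narrowly continuous representative in $C([0,1];\ps(\R^d))$ (cf.\ \cite[Ch.~8]{ambrosio2008gradient}), so $(\rho_t,v_t)$ is a genuine minimizer of \eqref{eq:AbsInducedD}.

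I expect the two main obstacles to be: (i) getting the function spaces exactly right and checking that \eqref{eq:Qualif} is precisely the interiority hypothesis of Fenchel--Rockafellar there — verifying boundedness of $\Lambda$, continuity of $F$, a usable description of $W^*$, and the harmless point that if $\G$ is not closed then the supremum over $\G$ equals that over $\overline\G$ (since $(\overline\G)^\perp = \G^\perp$ and $\varphi\mapsto\int\varphi\,\d\rho$ is $\lVert\cdot\rVert_\phi$-continuous); and (ii) the a posteriori regularity upgrading the abstract optimal pair $(\rho,E)\in W^*$ to an admissible competitor for \eqref{eq:AbsInducedD}, which rests on the superlinearity of the Benamou--Brenier integrand, a disintegration argument, and — only for this interpretation step, not for the duality identity itself — the moment bound $\int_0^1\int\phi\,\d\rho_t\,\d t<\infty$ along finite-energy curves.
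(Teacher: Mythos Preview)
Your proposal is correct and follows essentially the same Brenier-style Fenchel--Rockafellar argument as the paper: the paper works directly on your space $W$ with two convex functionals $K_1$ (encoding the range of your $\Lambda$ plus the boundary data) and $K_2=\chi_{\widetilde K}$, whereas you use the equivalent two-space form with the explicit operator $\Lambda:V\to W$. The only notable difference is that your formulation bypasses the well-definedness check the paper needs for $K_1$ (showing the boundary value is independent of the decomposition $(A,B)=(\partial_t\varphi+g,\nabla\varphi)$, which uses $\G^\perp\neq\emptyset$), at the cost of having to verify boundedness of $\Lambda$; both are routine.
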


\begin{proof}[Proof of Theorem \ref{th:Fgeodesic} using Theorem \ref{thm:dualExisF}]
Set $\G = \{ a f : a \in C ([0,1]), \, f \in \mathrm{span}(\F) \}$, with $\mathrm{span}(\F)$ being the closed linear span in $(C_\phi(\R^d),\|\cdot\|_\phi)$. It is clear that $\G^\perp$ is the set of Borel maps $[0,1] \to \F^\perp$, and so $\W_\G=\W_\F$. 
The fact that $\W_\F=\D_Y$ follows by combining \eqref{eq:DefDY-squared} with the characterization of AC curves in Wasserstein space \cite[Theorem 8.3.1]{ambrosio2008gradient}, as distributional solutions of the continuity equation $\partial_t \rho_t + \nabla \cdot ( \rho_t v_t ) = 0$, with the identification of metric derivative $|\dot\rho|_t$ as the minimal $\|v_t\|_{L^2(\rho_t)}$ in a suitable sense. With this identification it is also straightforward to argue that any minimizer $(\rho_t)_{0 \le t \le 1}$ in the definition of $\W_\F$ can be reparametrized to have constant metric derivative, making it a $Y$-geodesic.
Theorem \ref{thm:dualExisF} applies after noting that the assumption \eqref{eq:Qualif} holds with $g^0(t,x)=c-1-\phi(x)$ and $\varphi^0(t,x)=-ct$.
This implies that $Y$ is a geodesic space if $\D_Y < \infty$. 
\end{proof}

To prove Theorem \ref{thm:dualExisF}, we closely follow the approach of \cite[Chapter 4]{brenier2020examples}. We first relax the geodesic problem by looking at pairs of finite Radon measures $(\rho,q) \in (C_\phi ( [0,1] \times \R^d) )^\star \times (C_{\sqrt{\phi}} ( [0,1] \times \R^d; \R^d ) )^\star$.
We consider the problem of minimizing the action functional
\[ \A ( \rho, q ) := \sup_{\substack{(A,B) \in C_\phi ( [0,1] \times \R^d) \times C_{\sqrt{\phi}} ([0,1] \times \R^d; \R^d) \\ 2 A + \vert B \vert^2 \leq 0 }} \int_{[0,1] \times \R^\d} A \, \d \rho + \int_{[0,1] \times \R^\d} B  \cdot \d q , \]
under the constraints:
\begin{enumerate}[(1)]
    \item In weak sense against bounded $C^1$ test functions,
    \begin{equation} \label{eq:RelaxTrans}
     \begin{cases}
    \partial_t \rho + \nabla \cdot q = 0, \\
    (\rho_0, \rho_1) = (\rho_i, \rho_f).
    \end{cases}
    \end{equation}
    \item For every $
    g \in \G$, 
    \begin{equation} \label{eq:constraintsF}
    \int_{[0,1] \times \R^d} g \, \d \rho = 0. 
    \end{equation}
\end{enumerate}
This problem is convex and lower semicontinuous with respect to the weak-$\star$ topology.  
We borrow the following computation of  $\A ( \rho, q)$ from \cite[Proposition 3.4]{brenier1997homogenized}.

\begin{lemma}
We have $\A ( \rho, q ) < +\infty$ if and only if $\rho \ge 0$ and $q$ has a square-integrable density with respect to $\rho$. In this case,
\[ \A (\rho,q) = \frac{1}{2} \int_{[0,1] \times \R^d} \bigg\vert \frac{\d q}{\d \rho} \bigg\vert^2 \d \rho < +
\infty. \]
\end{lemma}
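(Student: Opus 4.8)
The plan is to recognize $\A(\rho,q)$ as the pointwise support function of the fixed convex set $\mathcal{K}:=\{(A,B):2A+|B|^2\le 0\}$ evaluated against the pair $(\rho,q)$, and then to run the standard convex duality for integral functionals of measures. I would first record the elementary pointwise computation: the convex, lower semicontinuous, positively $1$-homogeneous function $f(a,b):=|b|^2/(2a)$ for $a>0$, $f(0,0):=0$, $f:=+\infty$ otherwise, has polar set $\{(\alpha,\beta):\alpha a+\beta\cdot b\le f(a,b)\ \forall (a,b)\}$ equal to $\mathcal{K}$; equivalently $f$ is the support function of $\mathcal{K}$. Since $\A$ is itself positively $1$-homogeneous, convex, and weak-$\star$ lower semicontinuous in $(\rho,q)$, one route is to invoke directly the representation theorem for such functionals (Goffman--Serrin / Bouchitt\'e--Buttazzo, or Rockafellar's integral-functional duality): $\A(\rho,q)=\int f(\tfrac{\d\rho}{\d\theta},\tfrac{\d q}{\d\theta})\,\d\theta$ for any measure $\theta$ dominating $|\rho|$ and $|q|$, from which the domain condition ($\rho\ge 0$, $q\ll\rho$, $\d q/\d\rho\in L^2(\rho)$) and the value $\tfrac12\int|\d q/\d\rho|^2\,\d\rho$ are immediate. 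I would nonetheless also give the self-contained argument below, which is essentially that of \cite[Proposition 3.4]{brenier1997homogenized}.

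\emph{Sufficiency and sharpness.} Assuming $\rho\ge 0$ and $q=w\rho$ with $w\in L^2(\rho;\R^d)$, for any admissible $(A,B)$ I complete the square pointwise, $A+B\cdot w\le -\tfrac12|B|^2+B\cdot w=\tfrac12|w|^2-\tfrac12|B-w|^2\le\tfrac12|w|^2$, and integrate against $\rho\ge 0$ to get $\A(\rho,q)\le\tfrac12\int|w|^2\,\d\rho<\infty$. For the matching lower bound I approximate $w$ in $L^2(\rho;\R^d)$ by compactly supported continuous $B_n$ with $|B_n|\le n$ (compactly supported continuous functions are dense in $L^2$ of a finite Radon measure on the locally compact space $[0,1]\times\R^d$, and they lie in $C_{\sqrt{\phi}}$), set $A_n:=-\tfrac12|B_n|^2\in C_\phi$, and note $\int A_n\,\d\rho+\int B_n\cdot\d q=-\tfrac12\int|B_n|^2\,\d\rho+\int B_n\cdot w\,\d\rho\to\tfrac12\int|w|^2\,\d\rho$ (Cauchy--Schwarz handles $B_n\cdot w\to|w|^2$ in $L^1(\rho)$), whence $\A(\rho,q)\ge\tfrac12\int|w|^2\,\d\rho$.

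\emph{Necessity, and where the difficulty lies.} Assuming $\A(\rho,q)<\infty$, I would derive the three domain conditions by exhibiting test pairs that blow the supremum up when they fail. For $\rho\ge 0$: if the negative part $\rho^-\ne 0$, pick (by regularity) a compact $K$ with $\rho^-(K)>0$ sitting inside an open set with tiny $\rho^+$-mass and test with $(A,B)=(-\lambda\chi,0)$ for a continuous cutoff $\chi\approx\1_K$, so $\int A\,\d\rho\to+\infty$. For $q\ll\rho$: Lebesgue-decompose $q=w\rho+q_s$ with $q_s\perp\rho$ and $w\in L^1(\rho;\R^d)$; if $q_s\ne 0$, combine its polar decomposition $q_s=\sigma|q_s|$ with Lusin's theorem and inner/outer regularity to produce a compact $K'$ carrying almost all of $|q_s|$ on which $\sigma$ is continuous and with $\rho(K')=0$, a continuous extension $B_0$ of $\sigma|_{K'}$ with $|B_0|\le 1$, and a cutoff $\eta\equiv 1$ on $K'$ supported in an open $V\supset K'$ of arbitrarily small $\rho$-mass; then testing with $(A,B)=(-\tfrac12|B|^2,\lambda B_0\eta)$ gives a quantity of the form $-\tfrac12\lambda^2\rho(V)+c\lambda-o(\lambda)$, which tends to $+\infty$ once $\rho(V)$ is made to shrink faster than $1/\lambda$. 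For $\d q/\d\rho\in L^2(\rho)$: with $q=w\rho$, if $\int|w|^2\,\d\rho=\infty$ I approximate the radial truncations $w_n:=w\min(|w|,n)/|w|$ by continuous $B_n$ with $|B_n|\le n$ and set $A_n:=-\tfrac12|B_n|^2$; dominated convergence then gives $\int A_n\,\d\rho+\int B_n\cdot\d q\to -\tfrac12\int|w_n|^2\,\d\rho+\int w_n\cdot w\,\d\rho\ge\tfrac12\int\min(|w|,n)^2\,\d\rho\uparrow\infty$. I expect the main obstacle to be precisely the construction of the continuous vector fields realizing these concentrations — this needs the polar (Radon--Nikodym) decomposition of the $\R^d$-valued measure $q$, Lusin's theorem, and regularity, and in the singular-part step the care of coupling the scale of the cutoff region to $\lambda$; the growth conditions defining $C_\phi$ and $C_{\sqrt{\phi}}$ cause no trouble since all of these test functions are compactly supported. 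Feeding $\rho\ge0$, $q=w\rho$, $w\in L^2(\rho)$ back into the sufficiency step then closes the circle.
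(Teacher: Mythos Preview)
Your proposal is correct, and the paper itself gives no proof at all---it simply cites \cite[Proposition 3.4]{brenier1997homogenized}, the very reference you invoke. Your self-contained argument (support-function identification of $f(a,b)=|b|^2/2a$, completing the square for the upper bound, density of $C_c$ in $L^2(\rho)$ for sharpness, and the three regularity/Lusin constructions for necessity) is precisely the standard route behind that citation, so there is nothing to compare: you have filled in what the paper omits.
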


Considering test functions in \eqref{eq:RelaxTrans} that only depend on time, we get that the time-marginal of $\rho$ is the Lebesgue measure.
For any disintegration $\d \rho = \d \rho_t \d t$, we similarly get that $\rho_t$ has mass $1$ for a.e.\ $t \in [0,1]$.
Hence, if \eqref{eq:RelaxTrans} and \eqref{eq:constraintsF} both hold, and $\A ( \rho, q ) < \infty$, then $(\rho_t)_{0 \leq t \leq 1} \in \G^\perp$.
Conversely, \eqref{eq:minthmExis} implies that $q(\d t,\d x) := v_t(x) \rho_t(\d x) \d t$ belongs to $( C_{\sqrt{\phi}} ( [0,1] \times \R^d; \R^d ) )^\star$ if $\A (\rho,q) < \infty$.
We have thus shown that
\begin{equation}
\frac{1}{2} \W^2_{\G} ( \rho_\mathrm{i}, \rho_{\mathrm{f}} ) = \inf\big\{  \A(\rho,q) : (\rho,q) \text{ satisfies \eqref{eq:RelaxTrans}--\eqref{eq:constraintsF}} \big\} . \label{pf:eq:WG-A}
\end{equation}
We now turn to the proof of Theorem \ref{thm:dualExisF}.

\begin{proof}[Proof of Theorem \ref{thm:dualExisF}]
Let us introduce the Banach space $E := C_\phi ( [0,1] \times \R^d) \times C_{\sqrt{\phi}} ( [0,1] \times \R^d; \R^d)$.
For $(A,B) \in E$, we set
\[ K_1 (A,B) := - \int_{\R^d} \varphi (1,x) \,\rho_{\mathrm{f}} (\d x) + \int_{\R^d} \varphi (0,x)\,\rho_{\mathrm{i}} (\d  x), \]
if there exist $g \in \G$ and $\varphi \in C_\phi^1 ([0,1] \times \R^d)$ such that
\[ A = \partial_t \varphi + g, \qquad B = \nabla \varphi, \]
and otherwise we set $K_1 (A,B) := \infty$.
Let us check the consistency of this definition: If $(A,B)$ is given by two different couples $(\varphi,g)$ and $(\tilde\varphi,\tilde{g})$, then $\nabla \varphi = \nabla \tilde\varphi$, which implies that $h' := g-\tilde g = \partial_t\tilde\varphi-\partial_t\varphi$ is independent of $x$. Since $g-\tilde{g} \in \G$, we may choose arbitrarily $(\rho_t)_{0 \leq t \leq 1}$ in $\G^\perp$ (which was assumed nonempty), and then
\begin{align*}
0 &= \int_0^1 \int_{\R^d} \big( g - \tilde{g} \big) \, \d\rho_t  \d t = \int_0^1 h'(t) \,\d t = h (1) - h(0) \\
    &= \int_{\R^d} \big(\tilde\varphi(1,x)-\varphi(1,x)\big) \rho_{\mathrm{f}}(\d x) - \int_{\R^d} \big(\tilde\varphi(0,x)-\varphi(0,x)\big) \rho_{\mathrm{i}}(\d x).
\end{align*}
Thus, $K_1(A,B)$ does not depend on the chosen decomposition for $(A,B)$.
We then define
\[   K_2 (A,B) :=
\begin{cases}
    0 \quad &\text{if} \quad 2 A + \vert B \vert^2 \leq 0 \\
     \infty &\text{otherwise}.
\end{cases}
\]
for $(A,B) \in E$.
The functions $K_1$ and $K_2$ are convex, and we now apply the Fenchel-Rockafellar duality theorem \cite[Theorem 1.12]{brezis2011functional}.
To do so, it is sufficient to exhibit $u^0 \in E$ such that both $K_1$, $K_2$ are finite at $u^0$, and $K_2$ is continuous at $u^0$.
Using \eqref{eq:Qualif}, the choice $(A^0,B^0) = (\partial_t \varphi^0 + g^0,\nabla \varphi^0)$ is suitable, since any small perturbation $(A,B)$ of $(A^0,B^0)$ in $E$-norm still satisfies $2 A + \vert B \vert^2 < 0$.
Fenchel-Rockafellar then states that 
\[ \sup_{u \in E} - K_1 (u) - K_2 (u) = \inf_{v \in E^\star} K_1^\star (-v) + K_2^\star (v), \]
with the infimum on the right-hand side being attained as long as it it finite. 
We then notice that the Legendre transform $K_2^\star$ is precisely equal to $\A$, and that
\begin{equation*}
K^\star_1 (-\rho,-q) = \sup_{\varphi,g} \int_{[0,1] \times \R^d} \Big(( - \partial_t \varphi - g )\d \rho -  \nabla \varphi \cdot \d q\Big) + \int_{\R^d} \varphi (1,\cdot) \d \rho_{\mathrm{f}}  - \int_{\R^d} \varphi (0,\cdot) \d \rho_{\mathrm{i}} .
\end{equation*} 
For $(\rho,q)$ such that $\A(\rho,q) < \infty$ we see that 
\[
K^\star_1 (-\rho,-q) = \begin{cases} 0 &\text{if }  \partial_t \rho + \nabla \cdot q = 0, \quad (\rho_0,\rho_1) = ( \rho_\mathrm{i}, \rho_\mathrm{f} ), \quad (\rho_t)_{0 \leq t \leq 1} \in \G^\perp \\ \infty &\text{otherwise}. \end{cases}
\]
Recalling \eqref{pf:eq:WG-A}, this proves the desired strong duality.
\end{proof}

\subsection{Existence of curves of finite length} \label{se:finitecurves}

Recall that Proposition \ref{pro:geodesicY} and Theorem \ref{th:Fgeodesic} proved that certain spaces are geodesic provided that there exist curves of finite length. In this section we show how to construct such curves in the setting of the EVI invariance principle for $\lambda > 0$, where we have a natural candidate: For $y_1,y_2 \in Y$, we can run the EVI flow starting from each point, which should converge to the unique minimizer of the functional. We then concatenate the EVI flow started from $y_1$ with the time-reversal of the EVI flow started from $y_2$, to obtain a curve connecting $y_1$ and $y_2$. It appears to be an open question whether or not an $\mathrm{EVI}_\lambda$-flow for $\lambda > 0$ necessarily produces a curve of finite length. We show in Proposition \ref{pr:Ylengthspace} that this is true for starting points in the domain of the energy functional $\e$. To handle starting points outside the domain of the functional, we require stronger information on the short-time regularization of the flow, which does not appear to follow solely from the EVI structure, and this is treated in  Proposition \ref{pr:Ylengthspace2}.

\begin{proposition} \label{pr:Ylengthspace}
Assume the conditions of Theorem \ref{th:main} hold, with $\lambda > 0$, and with $(X,\D_X)$ being a complete (non-extended) metric space.  
Assume there exists $y_* \in Y$ such that $\e(y_*) =\inf_{x \in X}\e(X)$.
Then every pair $y_1,y_2 \in Y \cap D(\e)$ can be connected by a curve of finite length in $Y$; that is, $\D_Y(y_1,y_2) < \infty$. 
\end{proposition}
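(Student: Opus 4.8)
The plan is to exploit the natural candidate curve mentioned before the statement: for $y \in Y\cap D(\e)$ let $(\gamma^y_t)_{t>0}$ be the $\mathrm{EVI}_\lambda$-flow furnished by the hypotheses of Theorem \ref{th:main}, which stays in $Y$ for all $t>0$. Since $\lambda>0$, $y_*$ is the \emph{unique} minimizer of $\e$ over $X$ and the flow from $y_*$ is constant. I would first reduce the proposition to the single assertion: \emph{for every $y\in Y\cap D(\e)$, the curve $\gamma^y$ has finite length $\int_0^\infty|\dot\gamma^y|_t\,\d t<\infty$ and $\D_X(\gamma^y_t,y_*)\to 0$ as $t\to\infty$.} Granting this, reparametrize $\gamma^{y_1}$ by arc length on $[0,L_1]$ (extending continuously to the endpoints $y_1$ and $y_*$, which is legitimate because the length is finite and the flow converges to $y_*$), do the same for $\gamma^{y_2}$, and concatenate the first with the time-reversal of the second, so that the two pieces meet at the common value $y_*$. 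The resulting curve is Lipschitz, lies entirely in $Y$ (each flow does by hypothesis, and $y_1,y_2,y_*\in Y$), and has length $L_1+L_2$; hence $\D_Y(y_1,y_2)\le L_1+L_2<\infty$.

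For the reduced claim I would argue as follows. Convergence $\gamma^y_t\to y_*$ and an exponential bound on the energy deficit both come from the $\lambda>0$ structure: testing the $\mathrm{EVI}_\lambda$ inequality \eqref{eq:EVI} against $x=y_*$ and using $\e(y_*)\le\e(\gamma^y_t)$ shows that $t\mapsto e^{\lambda t}\D_X^2(\gamma^y_t,y_*)$ is non-increasing, which gives exponential decay of $\D_X(\gamma^y_t,y_*)$; and combining the log-Sobolev-type inequality $\e(\gamma^y_t)-\inf_X\e\le\tfrac{1}{2\lambda}|\partial\e|^2(\gamma^y_t)$ (available here because an $\mathrm{EVI}_\lambda$-flow emanates from $\gamma^y_t$, cf.\ \eqref{ineq:LSI-X} and the second proof of Proposition \ref{pr:Talagrand-extreme}) with the energy identity \eqref{eq:energyidentity} shows that $\phi(t):=\e(\gamma^y_t)-\inf_X\e$ satisfies $\phi'(t)\le -2\lambda\phi(t)$, hence decays geometrically. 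Alternatively one may simply invoke Proposition \ref{pr:Talagrand-extreme} applied to $\gamma^y$, whose conclusion directly bounds the tail length. Then split $\int_0^\infty|\dot\gamma^y|_t\,\d t$ into $\int_0^1$ and $\sum_{n\ge1}\int_n^{n+1}$; by Cauchy--Schwarz and \eqref{eq:energyidentity} the $n$-th term is at most $(\e(\gamma^y_n)-\e(\gamma^y_{n+1}))^{1/2}=(\phi(n)-\phi(n+1))^{1/2}\le\phi(n)^{1/2}$, which is geometrically small, so the series converges; and $\int_0^1|\dot\gamma^y|_t^2\,\d t=\lim_{s\downarrow0}(\e(\gamma^y_s)-\e(\gamma^y_1))\le\e(y)-\inf_X\e<\infty$, using the inequality $\tfrac12\D^2(\gamma^y_s,y)\le s(\e(y)-\e(\gamma^y_s))$ (hence $\e(\gamma^y_s)\le\e(y)$) already invoked in the proof of Theorem \ref{th:main}. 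This yields the claim.

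The main obstacle is precisely the finiteness of the tail length. The energy identity only gives $\int_1^\infty|\dot\gamma^y|_t^2\,\d t=\e(\gamma^y_1)-\inf_X\e<\infty$, and square-integrability over an unbounded interval does \emph{not} imply integrability of $|\dot\gamma^y|$ there; one genuinely needs the exponential decay of $\phi$, which is exactly where $\lambda>0$ is used (a summable-but-not-geometric deficit would not suffice). A secondary point is the short-time control near $t=0$: the bound $\int_0^1|\dot\gamma^y|_t^2\,\d t\le\e(y)-\inf_X\e$ relies on $y\in D(\e)$, which is why the statement is restricted to starting points in the domain; handling starting points outside $D(\e)$ requires the extra short-time regularization input treated in Proposition \ref{pr:Ylengthspace2}.
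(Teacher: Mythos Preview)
Your proposal is correct and follows essentially the same strategy as the paper: run the $\mathrm{EVI}_\lambda$-flows from $y_1$ and $y_2$, show each has finite total length by splitting into a short-time piece (controlled via the energy identity and $y_i\in D(\e)$, using $L^2\subset L^1$ on bounded intervals) and a tail piece (controlled via the exponential decay coming from $\lambda>0$), and concatenate through $y_*$. The only cosmetic difference is in the tail estimate: the paper invokes the regularization bound $|\dot\gamma|_t=|\partial\e|(\gamma_t)\le |\partial\e|(\gamma_{t_0})e^{-\lambda(t-t_0)}$ directly, whereas you obtain the same conclusion from exponential decay of the energy deficit combined with Cauchy--Schwarz on unit intervals (or, as you note, by quoting Proposition~\ref{pr:Talagrand-extreme}).
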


\begin{rem}
Note that the assumption involving $y_*$ cannot be removed in general, as illustrated by the simple example where $X=\R$, $\e(x)=x^2$, and $Y = \R \setminus \{0\}$.
However, if $Y \subset X$ is assumed to be closed and $\e$ lower semicontinuous, then the assumption involving $y_*$ can be removed. 
Indeed, by \cite[Theorem 3.5]{muratori2020gradient} and completeness of $(X,\D_X)$, the functional $\e$ automatically admits a unique minimizer on $X$, and every $\mathrm{EVI}_\lambda$ flow converges to it. In particular, an $\mathrm{EVI}_\lambda$-flow initialized in $Y$ stays in $Y$ by the assumption of Theorem \ref{th:main}, and by closedness of $Y$ we deduce that $y_* \in Y$.
\end{rem}

\begin{proof}[Proof of Proposition \ref{pr:Ylengthspace}]
Let $(\gamma^i_t)_{t \ge 0}$ be the $\mathrm{EVI}_\lambda$-flow of $\e$ started from $y_i$, for each $i=1,2$. Since $t \mapsto \e(\gamma^i_t)$ is nonincreasing and converges to $\e(y_*)$ \cite[Theorem 3.5]{muratori2020gradient}, the energy identity \eqref{eq:energyidentity} yields
\[
\int_0^\infty |\dot\gamma^i|^2_t\,\d t = \e(y_i) - \e(y_*) < \infty.
\]
We also have 
\[
|\dot\gamma^i|_t=|\partial\e|(\gamma^i_t) \le |\partial\e|(\gamma^i_{t_0})e^{-\lambda(t-t_0)} < \infty, \quad t > t_0 > 0,
\]
where the first equality follows again from (3.17) of \cite{muratori2020gradient}, and the two inequalities follow from  the regularization estimate (3.18d) therein. By using this to control the integral over $[t_0,\infty)$, and noting that $L^1[0,t_0] \subset L^2[0,t_0]$, we deduce that
\[
\int_0^\infty |\dot\gamma^i|_t\,\d t < \infty, \quad i=1,2.
\]
Now, let $h : [0,1/2) \to [0,\infty)$ be any strictly increasing continuously differentiable bijection. Define a curve $\gamma : [0,1] \to X$ by setting
\[
\gamma_t = \begin{cases}
    \gamma^1_{h(t)} &\text{if } 0 \le t < 1/2 \\
    y_* &\text{if } t=1/2 \\
    \gamma^2_{h(1-t)} &\text{if } 1/2 < t \le 1.
\end{cases}
\]
Then $\gamma \in AC([0,1];(Y,\D_X|_{Y\times Y}))$ satisfies $\gamma_0=y_1$ and $\gamma_1=y_2$. Moreover,
\begin{align*}
\int_0^1 |\dot\gamma|_t\,\d t 
    &= \int_0^\infty |\dot\gamma^1|_t\, \d t + \int_0^\infty |\dot\gamma^2|_t\, \d t  < \infty. \qedhere
\end{align*}
\end{proof}

If we do not assume $y_i \in D(\e)$, the proof of Proposition \ref{pr:Ylengthspace} fails only because we do not have a way to guarantee that $|\dot\gamma^i|_t$ is integrable near the origin. The energy identity \eqref{eq:energyidentity} suggests another way to proceed. The metric slope $|\partial\e|(\gamma^i_t)$ is finite for all $t > 0$, even if it is not so at $t=0$. Using solely the EVI structure, one can obtain a regularization estimate of $|\partial\e|(\gamma^i_t) \lesssim 1/t$ for short time; see, e.g., \cite[(3.18d)]{muratori2020gradient}.
However, this is not good enough for our purposes, as it is not integrable for $t \downarrow 0$.
Interestingly, however, in typical Wasserstein space settings, we can show  a sharper estimate of $|\partial\e|(\gamma^i_t) \lesssim 1/\sqrt{t}$; this is noted in the Ornstein-Uhlenbeck setting in \cite[Remark 24.21]{villani2008optimal}. 
This comes from a short-time regularization bound for the Fisher information along Fokker-Planck equations,  which we justify and exploit in the following two propositions. The first  treats Euclidean space via shift-Harnack inequalities, and the second treats Riemannian  manifolds  of nonnegative Ricci curvature by means of classical  Li-Yau inequalities and a curvature-dimension condition. 
This points to an interesting open problem, which is to characterize when this stronger regularization rate holds for EVI flows in the metric space setting.

\begin{proposition} \label{pr:Ylengthspace2}
Let $(X,\D_X)=(\ps_2(\R^d),\W_2)$ and $\lambda > 0$, and consider $\e : X \to (-\infty,\infty]$ of the form
\[
\e(\rho) = H(\rho) + \int V\,\d\rho,
\]
where $V : \R^d \to \R$ is a $\lambda$-convex function with bounded second derivatives.
Assume the conditions of Theorem \ref{th:main} hold, with this choice of $X$, $\lambda$, and $\e$, and for some $Y \subset X$. Assume also that there exists $y_* \in Y$ such that $\e(y_*) =\inf_{x \in X}\e(X)$. Then every pair $y_1,y_2 \in Y$ can be connected by a curve of finite length in $Y$; that is, $\D_Y(y_1,y_2) < \infty$.  
\end{proposition}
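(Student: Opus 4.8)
The plan is to run the concatenation strategy of Proposition~\ref{pr:Ylengthspace} without assuming $y_i\in D(\e)$, the only new ingredient being integrability of the metric derivative of the flow near $t=0$. Set $\pi(\d x)=Z^{-1}e^{-V(x)}\,\d x$; since $V$ is $\lambda$-convex with $\lambda>0$ this is a probability density, $-\log\pi$ is $\lambda$-convex, and $\e=H(\cdot\,|\,\pi)-\log Z$ is bounded below, so the $\mathrm{EVI}_\lambda$-flow $(\gamma^i_t)_{t>0}$ started from $y_i$ exists, solves the Fokker--Planck equation $\partial_t\rho=\nabla\cdot(\rho\nabla V)+\Delta\rho$, is smooth and positive for $t>0$, and converges to its unique long-time limit $y_*=\pi$ (the hypothesis on $y_*$ thus forces $\pi\in Y$). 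By the energy identity \eqref{eq:energyidentity} and the identification of the $\W_2$-slope of relative entropy with relative Fisher information (valid since $-\log\pi$ is $\lambda$-convex), $|\dot\gamma^i|_t=|\partial\e|(\gamma^i_t)=\sqrt{I(\gamma^i_t\,|\,\pi)}$ for a.e.\ $t>0$. Fixing any $t_0>0$, the bound $\int_{t_0}^\infty|\dot\gamma^i|_t\,\d t<\infty$ holds exactly as in Proposition~\ref{pr:Ylengthspace} (using $\e(\gamma^i_{t_0})<\infty$ and the exponential decay $|\partial\e|(\gamma^i_t)\le|\partial\e|(\gamma^i_{t_0})e^{-\lambda(t-t_0)}$), so everything reduces to proving $\int_0^{t_0}\sqrt{I(\gamma^i_t\,|\,\pi)}\,\d t<\infty$.

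The hard part is the short-time regularization estimate $I(\gamma^i_t\,|\,\pi)\le C/t$ for $t\in(0,t_0]$; granting it, $\int_0^{t_0}\sqrt{I(\gamma^i_t\,|\,\pi)}\,\d t\le 2\sqrt{Ct_0}<\infty$ and we are finished. This is exactly where the bare $\mathrm{EVI}_\lambda$ structure is insufficient: it gives only $|\partial\e|(\gamma^i_t)\lesssim 1/t$, i.e.\ $I\lesssim 1/t^2$, for which $\int_0^{t_0}\sqrt I$ barely diverges; the sharp exponent must come from the Fokker--Planck structure. To obtain it I would exploit the assumption that $V$ has Hessian bounded both above and below, say $\lambda I\le\nabla^2V\le\Lambda I$: the associated diffusion $\d X=-\nabla V(X)\,\d t+\sqrt2\,\d B$ then satisfies a shift-Harnack inequality of Wang type, equivalently a Bismut-type integration-by-parts formula $\E[\nabla_e f(X^x_t)]=\E[f(X^x_t)\,M^e_t]$ with $\E|M^e_t|^2\le c|e|^2/t$ for $t\in(0,t_0]$ uniformly in $x$; applied to $\rho_t:=\gamma^i_t$, the law at time $t$ started from $\rho_0=y_i$, this shows $\rho_t$ has a weakly differentiable density with Lebesgue--Fisher information $\int|\nabla\log\rho_t|^2\,\d\rho_t\le cd/t$. (Alternatively, Aronson-type Gaussian heat-kernel bounds together with parabolic gradient estimates give $|\nabla_y\log p_t(x,y)|\le c(t^{-1/2}+|x-y|/t)$, from which the same bound follows by Cauchy--Schwarz and integration of the Gaussian tails.) It then remains to pass to relative Fisher information: $I(\rho_t\,|\,\pi)=\int|\nabla\log\rho_t+\nabla V|^2\,\d\rho_t\le 2\int|\nabla\log\rho_t|^2\,\d\rho_t+2\int|\nabla V|^2\,\d\rho_t$, and the last term is controlled by $|\nabla V(x)|^2\le 2|\nabla V(0)|^2+2\Lambda^2|x|^2$ together with a Gr\"onwall bound on the second moment: from $x\cdot\nabla V(x)\ge x\cdot\nabla V(0)+\lambda|x|^2$ one gets $\frac{\d}{\d t}\int|x|^2\,\d\rho_t\le-\lambda\int|x|^2\,\d\rho_t+C'$, hence $\sup_{t\ge0}\int|x|^2\,\d\rho_t\le\max\big(\int|x|^2\,y_i(\d x),\,C'/\lambda\big)<\infty$ because $y_i\in\ps_2(\R^d)$. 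Combining, $I(\gamma^i_t\,|\,\pi)\le C/t$ on $(0,t_0]$ with $C$ depending only on $d,t_0,\lambda,\Lambda,\nabla V(0)$ and these uniform second moments.

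Finally, I would assemble the curve as in Proposition~\ref{pr:Ylengthspace}: with $h:[0,1/2)\to[0,\infty)$ a smooth increasing bijection, set $\gamma_t=\gamma^1_{h(t)}$ on $[0,1/2)$, $\gamma_{1/2}=\pi$, and $\gamma_t=\gamma^2_{h(1-t)}$ on $(1/2,1]$; this is absolutely continuous, joins $y_1$ to $y_2$, and has length $\int_0^\infty|\dot\gamma^1|_t\,\d t+\int_0^\infty|\dot\gamma^2|_t\,\d t<\infty$, whence $\D_Y(y_1,y_2)<\infty$. One subtlety worth flagging is that this curve must lie in $Y$, i.e.\ $\gamma^i_t\in Y$ for all $t>0$ even though $y_i$ need not be in $D(\e)$: in our applications $Y=\F^\perp$ and this is immediate because the Fokker--Planck flow preserves each linear constraint $\int f\,\d\rho_t=0$ from an arbitrary initial datum (the same computation used to verify the hypotheses of Theorem~\ref{th:main}), while more abstractly it holds whenever $Y$ is closed and $Y\cap D(\e)$ is dense in $Y$ for $\W_2$, using the $e^{-\lambda t}$-contractivity of the flow.
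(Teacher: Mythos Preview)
Your proposal is correct and follows essentially the same route as the paper: reduce to the short-time Fisher regularization $\int|\nabla\log\rho_t|^2\,\d\rho_t\le C/t$, obtain it via a shift-Harnack/Bismut argument (the paper cites Wang's framework and specifically \cite{altschuler2023shifted}, and also records the Bismut formula $\nabla\log\rho_t(x)=-\tfrac{1}{\sqrt{2}\,t}\E[\int_0^t(I-s\nabla^2V(X_s))\,\d B_s\mid X_t=x]$ as an alternative), bound the $\int|\nabla V|^2\,\d\rho_t$ term, and concatenate the two flows as in Proposition~\ref{pr:Ylengthspace}. Two minor differences: the paper controls the second moments via the Wasserstein contraction $\W_2(\rho_t,\pi)\le e^{-\lambda t}\W_2(\rho_0,\pi)$ rather than your Gr\"onwall argument, and the paper passes from the pointwise kernel bound to general initial data by convexity of Fisher information; both variants are fine. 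The subtlety you flag in your last paragraph---that the hypotheses of Theorem~\ref{th:main} only guarantee $\gamma^i_t\in Y$ for $y_i\in Y\cap D(\e)$---is a point the paper does not address explicitly; your remark that in the applications invariance of $Y$ under the Fokker--Planck flow is actually checked from arbitrary initial data (not just $D(\e)$) is the correct resolution.
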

\begin{proof}
Let us assume without loss of generality that $\pi(\d x)=e^{-V(x)}\d x$ is a probability measure.
The $\mathrm{EVI}_\lambda$-flow of $\e=H(\cdot\,|\,\pi)$ in $X=\ps_2(\R^d)$ is well known \cite[Theorem 11.2.8]{ambrosio2008gradient} to be governed by the Fokker-Planck equation,
\begin{equation}
\partial_t \rho = \mathrm{div}(\rho\nabla V) + \Delta \rho. \label{pf:FKP-Ylength}
\end{equation}
Consider a distributional solution $(\rho_t)_{t \ge 0} \in C([0,\infty);\ps_2(\R^d))$, where the initial $\rho_0$ can be any element of $\ps_2(\R^d)$, not necessarily absolutely continuous.
We will show  that
\begin{equation}
    \int_0^1 |\dot\rho|_t\,\d t < \infty. \label{pf:rho-finitelength}
\end{equation}
Once this is shown, we follow the proof of Proposition \ref{pr:Ylengthspace} to deduce that $(\rho_t)_{t \ge 0}$ necessarily has finite length and to construct a path of finite length between two arbitrary points in $Y$.

For each $t > 0$ we know that $\rho_t$ has a strictly positive density, by standard parabolic regularity theory.
The key fact we will use is that there is a constant $c$ such that
\begin{equation}
I(\rho_t) := \int |\nabla \log \rho_t|^2\,\d\rho_t \le c/t, \quad \text{for all } 0 < t < 1, \label{pf:FIregularize}
\end{equation}
where $c$ depends only on $\lambda$, the uniform bounds of $\nabla^2 V$, the dimension $d$, and $\int |x|^2\rho_0(\d x)$. Crucially, $c$ is finite regardless of the regularity of $\rho_0$.
Let us take \eqref{pf:FIregularize} for granted for the moment. We then use
\[
|\dot\rho|_t^2 = |\partial \e|^2(\rho_t) = \int \bigg|\nabla \log \frac{\rho_t}{\pi}\bigg|^2\,\d\rho_t,
\]
where the first formula is the EVI energy identity \eqref{eq:energyidentity}, and the second is standard \cite[Theorem 10.4.9]{ambrosio2008gradient}. This implies
\[
|\dot\rho|_t \le \sqrt{I(\rho_t)} + \Big(\int \big|\nabla V\big|^2\,\d\rho_t\Big)^{1/2}.
\]
Note that $\nabla V$ has linear growth, as $\nabla^2 V$ is bounded.
Hence, the second term is bounded by a constant times $1+\int|x|^2\,\rho_0(\d x)$, by the triangle inequality and the contraction estimate $\W_2(\rho_t,\pi) \le e^{-\lambda t}\W_2(\rho_0,\pi)$ \cite[Theorem 3.5]{muratori2020gradient}. Using \eqref{pf:FIregularize}, we thus obtain
\[
|\dot\rho|_t \le c(1+t^{-1/2}), \quad \text{for all } t > 0,
\]
for some $c$. The claim \eqref{pf:rho-finitelength} follows.

It remains to justify \eqref{pf:FIregularize}. One way is using F.-Y. Wang's shift Harnack inequalities, developed in great generality in  \cite{wang2014integration}. A concise recent reference is \cite{altschuler2023shifted}, and Theorem 1.1 therein contains exactly our setting. Indeed, let $p_t(x,y)$ denote the transition density of $X_t$ given $X_0=x$ associated with the Langevin SDE
\[
dX_t = -\nabla V(X_t) \d t + \sqrt{2}\d B_t.
\]
In PDE terms, $p_t(x,\cdot)$ is the (fundamental) solution of \eqref{pf:FKP-Ylength} initialized from $\delta_x$.
Let $\beta > 0$ be such that $\nabla^2 V(x) \le \beta I$ for all $x$.
Using the  implication (LGE) $\Rightarrow$ (SRT$_q$) for $q=2$ in \cite[Theorem 1.1]{altschuler2023shifted}, for all $x,v \in \R^d$ we have
\[
\log \int_{\R^d} \Big(\frac{p_t(x,z-v)}{p_t(x,z)}\Big)^2p_t(x,z)\,\d z \le \frac{\beta |v|^2}{1-\exp(-2\beta t)} .
\]
Rearranging, we have
\[
\int_{\R^d} \Big(\frac{p_t(x,z-v)-p_t(x,z)}{p_t(x,z)}\Big)^2p_t(x,z)\,\d z \le \exp\bigg(\frac{\beta |v|^2}{1-\exp(-2\beta t)}\bigg)-1.
\]
By Fatou's lemma,
\begin{align}
\int_{\R^d} \big(v\cdot \nabla \log_z p_t(x,z)\big)^2 p_t(x,z)\,\d z &\le \liminf_{h \downarrow 0} \frac{1}{h^2}\int_{\R^d} \Big(\frac{p_t(x,z-hv)-p_t(x,z)}{  p_t(x,z)}\Big)^2p_t(x,z)\,\d z \\
    &\le \frac{\beta |v|^2}{1-\exp(-2\beta t)}.
\end{align}
Sum over $v$ in an orthonormal basis to get
\[
I(p_t(x,\cdot)) \le \frac{d\beta}{1-\exp(-2\beta t)}.
\]
By the well-known convexity of Fisher information,
\[
I(\rho_t) = I\bigg(\int_{\R^d} p_t(x,\cdot)\,\rho_0(\d x)\bigg) \le \frac{d\beta}{1-\exp(-2\beta t)}.
\]
This proves \eqref{pf:FIregularize}. Another way to prove \eqref{pf:FIregularize} would be to use a Bismut-type formula for $\nabla \log \rho_t$, which we learned from \cite[Remark 4.13]{follmer2006time}:
\[
\nabla\log\rho_t(x) = -\frac{1}{\sqrt{2} \,t}\E\bigg[\int_0^t \big(I - s\nabla^2 V(X_s)\big)\,\d B_s \,\Big|\, X_t=x \bigg].
\]
Taking the square, integrating with respect to $\rho_t(\d x)$, and applying Jensen's inequality and It\^o isometry leads again to \eqref{pf:FIregularize}.
\end{proof}

We next treat the setting of a  weighted Riemannian manifold $(M,g,\pi)$, in the sense of \cite[Definition 3.17]{grigoryan2009heat}. That is, we assume that $(M,g)$ is a smooth orientable Riemannian manifold with finite dimension $d \in \N$ and volume measure $\mathrm{vol}$, and $\pi = e^{-V} \mathrm{vol}$ is a positive measure on $M$ for a smooth potential $V : M \rightarrow \R$.
We further assume that $M$ is complete, and $\pi$ has finite mass. For $n \in(d,\infty]$ and $\lambda\in\R$, we say that $(M,g,\pi)$ satisfies the $CD(\lambda,n)$ condition (curvature-dimension) if
\begin{equation} \label{eq:CDCondition}
\mathrm{Ric} + \nabla^2 V \geq \lambda I + \frac{\nabla V \otimes \nabla V}{n - d},
\end{equation}
in the sense of quadratic forms, where $\mathrm{Ric}$ is the Ricci curvature tensor. The differential entropy $H(\rho)$ is defined as usual by $H(\rho) = \int \rho\log\rho \,\d \mathrm{vol}$, when $\rho\log \rho \in L^1(\mathrm{vol})$, and $H(\rho)=\infty$ otherwise.

\begin{proposition} \label{pr:RiemannCD}
Suppose $(M,g,\pi)$ is a weighted Riemannian manifold as described above, satisfying the condition $CD(\lambda,n)$ for some finite $n > d$ and $\lambda > 0$.
Let $(X,\D_X)=(\ps_2(M),\W_2)$, and consider $\e : X \to (-\infty,\infty]$ of the form
\[
\e(\rho) = H(\rho) + \int V\,\d\rho.
\]
Assume the conditions of Theorem \ref{th:main} hold, with this choice of $X$, $\lambda$, and $\e$, and for some $Y \subset X$. Assume also that there exists $y_* \in Y$ such that $\e(y_*) =\inf_{x \in X}\e(X)$. Then every pair $y_1,y_2 \in Y$ can be connected by a curve of finite length in $Y$; that is, $\D_Y(y_1,y_2) < \infty$.  
\end{proposition}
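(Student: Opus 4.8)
The plan is to follow the proof of Proposition~\ref{pr:Ylengthspace2} almost word for word, replacing its single Euclidean-specific ingredient --- the short-time Fisher information bound \eqref{pf:FIregularize}, obtained there from F.-Y.\ Wang's shift-Harnack inequalities --- by the analogous bound on $M$, now extracted from the Li-Yau parabolic gradient estimate together with the curvature-dimension condition. A few preliminaries streamline the setting. Since $\lambda>0$ and $n<\infty$, the condition $CD(\lambda,n)$ forces $M$ to be compact by the generalized Bonnet-Myers theorem; hence $\ps_2(M)=\ps(M)$ is weakly compact, $\pi$ may be renormalized to a probability measure with $\overline{D(\e)}=\ps(M)$, and $\e=H(\cdot\,|\,\pi)$ is nonnegative. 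Moreover $CD(\lambda,n)$ implies $CD(\lambda,\infty)$, under which $\e$ is geodesically $\lambda$-convex on $(\ps_2(M),\W_2)$ and generates an $\mathrm{EVI}_\lambda$-flow starting from every point of $\ps(M)$, governed by the Fokker-Planck equation $\partial_t\rho=\mathrm{div}(\rho\nabla V)+\Delta\rho$; writing $h_t:=\d\rho_t/\d\pi$, this is the heat equation $\partial_t h_t=Lh_t$ for the $\pi$-symmetric conservative weighted Laplacian $L:=\Delta-\nabla V\cdot\nabla$, whose heat kernel $p_t(\cdot,\cdot)$ (with respect to $\pi$) is positive and smooth for $t>0$ by parabolic regularity on the compact $M$. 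Finally, along this flow the metric derivative is the relative Fisher information, $|\dot\rho|_t^2=|\partial\e|^2(\rho_t)=I(\rho_t\,|\,\pi):=\int|\nabla\log h_t|^2\,\d\rho_t$, combining the $\mathrm{EVI}$ energy identity \eqref{eq:energyidentity} with the manifold version of \cite[Theorem~10.4.9]{ambrosio2008gradient}.

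The core step is to prove
\[
I(\rho_t\,|\,\pi)\ \le\ \frac{n}{2t}\qquad\text{for all }t>0,
\]
with a constant depending only on the curvature-dimension parameter $n$ and, crucially, not on the regularity of $\rho_0$. Because $\lambda>0$, the condition $CD(\lambda,n)$ implies $CD(0,n)$, so the classical Li-Yau inequality applies: every positive solution $u$ of $\partial_t u=Lu$ satisfies $|\nabla u|^2/u^2-\partial_t u/u\le n/(2t)$. Apply this to $u_t(y):=p_t(y,x)$, multiply by $u_t\ge0$, and integrate in $y$ against $\pi$:
\[
\int_M\frac{|\nabla u_t|^2}{u_t}\,\d\pi-\int_M\partial_t u_t\,\d\pi\ \le\ \frac{n}{2t}\int_M u_t\,\d\pi .
\]
Conservativeness gives $\int_M p_t(y,x)\,\pi(\d y)=1$, so the right-hand side equals $n/(2t)$ and the second term on the left vanishes; thus $I(p_t(\cdot,x)\pi\,|\,\pi)\le n/(2t)$ for every $x\in M$, where $p_t(\cdot,x)\pi$ denotes the measure $p_t(y,x)\,\pi(\d y)$. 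Writing the solution as the mixture $\rho_t=\int_M (p_t(\cdot,x)\pi)\,\rho_0(\d x)$ and using that $\mu\mapsto I(\mu\,|\,\pi)$ is convex --- with $h:=\d\mu/\d\pi$ it is the integral against $\pi$ of the jointly convex map $(h,\nabla h)\mapsto|\nabla h|^2/h$ --- Jensen's inequality yields $I(\rho_t\,|\,\pi)\le\int_M I(p_t(\cdot,x)\pi\,|\,\pi)\,\rho_0(\d x)\le n/(2t)$, as claimed. An Elworthy-Li/Bismut formula for $\nabla\log\rho_t$ would give the same bound, at the cost of tracking the Ricci curvature terms.

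Granting this, the argument of Proposition~\ref{pr:Ylengthspace2} carries over verbatim. From $|\dot\rho|_t=\sqrt{I(\rho_t\,|\,\pi)}\le\sqrt{n/(2t)}$ we get $\int_0^1|\dot\rho|_t\,\d t\le\sqrt{2n}<\infty$, while for $t\ge1$ the $\mathrm{EVI}_\lambda$ regularization and contraction estimates give $|\dot\rho|_t=|\partial\e|(\rho_t)\le e^{-\lambda(t-1)}|\partial\e|(\rho_1)<\infty$, so $\int_1^\infty|\dot\rho|_t\,\d t<\infty$ and the whole Fokker-Planck curve from any $\rho_0\in\ps(M)$ has finite length. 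The reduction of the proposition to this fact --- that the flow started from a point of $Y$ stays in $Y$, and that concatenating the flow from $y_1$, the minimizer $y_*$, and the time-reversed flow from $y_2$ yields a finite-length curve in $Y$ joining $y_1$ and $y_2$ --- is exactly as in Propositions~\ref{pr:Ylengthspace} and~\ref{pr:Ylengthspace2}, giving $\D_Y(y_1,y_2)<\infty$.

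The main obstacle is the middle step: pinning down the precise Li-Yau estimate valid under $CD(0,n)$ on a complete weighted manifold and checking that its hypotheses match those assumed here (completeness is assumed, compactness is automatic, and the dimensional parameter $n$ rather than $\dim M$ is exactly what the weighted $CD(0,n)$ formulation supplies). The remaining pieces --- the mixture representation and convexity of Fisher information, the EVI energy identity and exponential contraction, and the concatenation construction --- are routine or inherited from the earlier propositions; the one other ingredient needing a reference is the identification, on a manifold, of the metric slope of the relative entropy with the square root of the relative Fisher information.
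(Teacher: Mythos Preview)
Your proof is correct and follows the same overall strategy as the paper: use the Li--Yau inequality under $CD(0,n)$ to obtain the short-time regularization $|\partial\e|^2(\rho_t)\le n/(2t)$, hence $\int_0^1|\dot\rho|_t\,\d t<\infty$, and then concatenate flows as in Proposition~\ref{pr:Ylengthspace}.

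Two tactical differences are worth noting. First, you observe that $CD(\lambda,n)$ with $\lambda>0$ and $n<\infty$ forces $M$ to be compact by the generalized Bonnet--Myers theorem; the paper does not use this and instead handles the possibly-noncompact case by invoking the heat-kernel $L^\infty$ bounds of \cite{bakry2017li}. Your shortcut is legitimate under the stated hypotheses and spares you that step. Second, to pass from the Fisher bound for the individual kernels $p_t(\cdot,x)$ to arbitrary $\rho_0$, you use the mixture representation together with convexity of $\mu\mapsto I(\mu\,|\,\pi)$, whereas the paper first proves the bound for $\rho_0\ll\pi$ and then extends by weak approximation and lower semicontinuity of $|\partial\e|^2$. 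Both routes are standard; yours is arguably more direct.
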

\begin{proof}
Assume without loss of generality that $\pi(\d x)=e^{-V(x)}\d x$ is a probability measure, so that $\e=H(\cdot\,|\,\pi)$.
We notice that $CD(\lambda,n)$ implies $CD(\lambda,\infty)$, which is equivalent to $H(\cdot \vert \pi)$ being geodesically $\lambda$-convex in $\ps_2 (M)$ by \cite[Corollary 1.5]{sturm2005convex}.
It is shown in \cite[Theorem 23.19]{villani2008optimal} that the $\mathrm{EVI}_\lambda$-flow\footnote{The notion of gradient flow given by \cite[Definition 23.7]{villani2008optimal} is slightly different from ours, but it implies ours when $\e$ is $\lambda$-convex, as noted in \cite[Remark 23.8]{villani2008optimal}.} for $H(\cdot\,|\,\pi)$ is described by the Fokker-Planck equation 
\begin{equation} 
\partial_t \rho_t = \mathrm{div}(\rho_t \nabla V) + \Delta \rho_t = L^\star \rho_t, \label{pf:FKP-Riemannian}
\end{equation}
where $L := - \nabla V \cdot \nabla + \Delta$,
provided that we check first that the solution $(t,x) \mapsto \rho_t (x)$ of \eqref{pf:FKP-Riemannian} is $C^{1,2}$ and satisfies $\vert \partial \e \vert (\rho _t) < \infty$ for $t > 0$.
To check these requirements, we notice that \eqref{pf:FKP-Riemannian} is equivalent to the heat equation
\begin{equation} 
\partial_t \pi_t = \Delta_\pi \pi_t, \quad \text{for} \quad \pi_t :=  \d \rho_t/\d \pi, \label{pf:weighted-heat-eq}
\end{equation}
where $\Delta_\pi := \mathrm{div}_\pi \circ \nabla = L$ is the weighted Laplacian on $(M,g,\pi)$, recalling that $\mathrm{div}_\pi$ is the $L^2$-adjoint of $- \nabla$ in $L^2 ( \pi)$. 
The smoothness of $\pi_t$ for initial data $\pi_0 \in L^1 (M)$ is known \cite[Theorem 7.19]{grigoryan2009heat}.
Let $( P_t )_{t \geq 0}$ denote the associated heat semigroup, so that $\pi_t = P_t \pi_0$.

Since $CD(\lambda,n)$ implies $CD(\rho,n)$ for every $\rho \in [0,\lambda]$, we can apply the Li-Yau inequality in the form of \cite[Proposition 5.1(i)]{bakry2017li} with the parameters $\alpha \downarrow 0$ and $\rho \downarrow 0$, which proves that for any smooth bounded and non-negative $f : M \rightarrow \R$,
\[ - L \log P_t f = \frac{\vert \nabla P_t f \vert^2}{( P_t f )^2} - \frac{L P_t f}{P_t f} \leq \frac{n}{2 t}. \]  
As a consequence, integrating by parts,
\[
\int_M \vert \nabla \log P_t f \vert^2 P_t f \,\d \pi = \int_M \nabla \log P_t f \cdot \nabla P_t f  \,\d \pi = \int_M [- L \log P_t f] P_t f \,\d \pi \le \frac{n}{2t} \int_M f \, \d \pi.
\]
When $f=\pi_0$ we have $\rho_t(\d x)=P_tf(x)\pi(\d x)$, and the left-hand side is exactly $\vert \partial \e \vert^2(\rho_t)$; indeed, the equality of Fisher information and the upper gradient of relative entropy is well known in great generality under the CD($\lambda$,$\infty$) condition \cite[Proposition 4.3]{ambrosio2014calculus}.
Up to a regularization argument, noting that $\vert \partial \e \vert^2$ is lower semicontinuous,
this gives the bound $\vert \partial \e \vert^2 (\rho_t) \leq n/ 2t$ provided that $\rho_0 \ll \pi$.

We next remove the restriction to $\rho_0 \ll \pi$.
From \cite[Theorem 7.13]{grigoryan2009heat}, we have
\[ P_t f (x) = \int_M p_t (x,y) f(y) \, \d \pi (y), \]
for a smooth symmetric kernel $(t,x,y) \mapsto p_t (x,y)$. 
Under $CD(\lambda,n)$, \cite[Lemma 4.2 and Corollary 4.3]{bakry2017li} provides $C>0$ such that for every $f \ge 0$,
\begin{equation} \label{eq:BoundSG}
\lVert P_t f \rVert_\infty \leq C\big(1 +  t^{-n/2}\big) \int_M f \, \d \pi, \qquad t > 0, 
\end{equation} 
which in turn implies an $L^\infty$-bound on $p_t$.
This allows for extending the heat semigroup to general initial data $\rho_0 \in \ps(M)$ by setting
\[ P_t \rho_0 (x) = \int_M p_t (x,y) \rho_0 (\d y). \]
The $L^\infty$-bound on $p_t$ implies that $(t,x) \mapsto P_t \rho_0 (x)$ is a smooth solution of the weighted heat equation \eqref{pf:weighted-heat-eq}. Moreover, $P_t\rho_0(x)\pi(\d x)$ converges weakly to $\rho_0$ as $t\downarrow 0$. To see this, let $f : M \to \R$ be bounded and continuous, and note that $\|P_t f\|_\infty\le\|f\|_\infty$ with $P_tf\to f$ pointwise by \cite[Theorem 7.16]{grigoryan2009heat}. Thus, using the symmetry $p_t(x,y)=p_t(y,x)$,
\[
\int_M f (x) P_t \rho_0 (x) \, \pi(\d x) = \int_M \bigg( \int_Mf(x) p_t (y,x) \,\pi(\d x) \bigg) \rho_0 (\d y ) \to \int_M f \,\d \rho_0.
\]
Taking $\rho^k_0 \ll \pi$ with $\rho^k_0 \to \rho_0$ weakly as $k\to\infty$, we have $\rho^k_t:=P_t\rho^k_0 \to P_t\rho_0=\rho_t$, and by lower semicontinuity $\vert \partial \e \vert^2 (\rho_t) \le \liminf_k \vert \partial \e \vert^2 (\rho^k_t) \le n/2t$.

Finally, we combine the EVI energy \eqref{eq:energyidentity} with the preceding estimate to get $|\dot\rho|_t = \vert \partial \e ( \rho_t ) \vert \le \sqrt{n/2t}$. This implies  the short-time integrability
\begin{equation}
    \int_0^1 |\dot\rho|_t\,\d t < \infty.
\end{equation}
The rest of the argument proceeds as in the proof of Proposition \ref{pr:Ylengthspace}.
\end{proof}

\section{Applications of the EVI invariance principle} \label{se:examples}

Throughout this section we will apply results of Section \ref{se:metric} in the case $(X,\D_X)=(\ps_2(\R^d),\W_2)$.

\subsection{Carlen-Gangbo spheres and moment constraints} \label{se:CarlenGangbo}

The paper of Carlen-Gangbo \cite{carlen2003constrained} gave a detailed account of the induced Wasserstein geometry of the \emph{sphere-like} sets
\[
\mathcal{S}_{u,\theta} := \bigg\{\rho \in \ps_2(\R^d) : \int x\,\rho(\d x) = u, \ \int |x-u|^2\,\rho(\d x) = d\theta\bigg\}, \ \ u \in \R^d, \ \theta > 0,
\]
discussed in Section \ref{intro:se:CarlenGangbo}.
Using an explicit description of the geodesics of $\mathcal{S}_{u,\theta}$, they showed in \cite[Theorem 3.6]{carlen2003constrained} that $H$ is geodesically $(1/\theta)$-convex in $\mathcal{S}_{u,\theta}$.
We give a short new proof using Theorem \ref{th:main} by means of a simple invariance property for the Ornstein-Uhlenbeck dynamics, essentially already presented in Section \ref{intro:se:CarlenGangbo}.

\begin{theorem} \label{th:CGentropy}
The functional $H$ is geodesically $(1/\theta)$-convex in $\mathcal{S}_{u,\theta}$. Moreover, $\mathcal{S}_{u,\theta}$ is a geodesic space.
\end{theorem}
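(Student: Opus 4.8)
The plan is to deduce both assertions from results already established: the geodesic convexity from the EVI invariance principle (Theorem \ref{th:main}), and the geodesic-space property from Theorem \ref{th:Fgeodesic} combined with the short-time regularization bound of Proposition \ref{pr:Ylengthspace2}.

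For the convexity I would run the argument sketched in Section \ref{intro:se:CarlenGangbo}. Introduce $\e(\rho) := H(\rho\,|\,N_{u,\theta I})$; this differs from $H$ only by an affine function of $\rho \mapsto \int|x-u|^2\,\rho(\d x)$, hence by a constant on $\mathcal{S}_{u,\theta}$, so geodesic $(1/\theta)$-convexity of $H$ in $\mathcal{S}_{u,\theta}$ is equivalent to that of $\e$. Strong $(1/\theta)$-log-concavity of $N_{u,\theta I}$ makes $\e$ geodesically $(1/\theta)$-convex on $\ps_2(\R^d)$ and, by \cite[Theorem 11.2.8]{ambrosio2008gradient}, its $\mathrm{EVI}_{1/\theta}$-flow solves the Ornstein-Uhlenbeck Fokker-Planck equation \eqref{intro:FKP}. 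The only thing to check is invariance of $\mathcal{S}_{u,\theta}$ under this flow: testing \eqref{intro:FKP} against $x-u$ and against $|x-u|^2$ and integrating by parts gives, for $f(t):=\int(x-u)\,\rho_t(\d x)$ and $g(t):=\int|x-u|^2\,\rho_t(\d x)-d\theta$, the linear ODEs $f'=-f/\theta$ and $g'=-2g/\theta$; so if $\rho_0\in\mathcal{S}_{u,\theta}$ then $f(0)=g(0)=0$, whence $f\equiv g\equiv 0$ and $\rho_t\in\mathcal{S}_{u,\theta}$ for all $t\ge 0$. Theorem \ref{th:main} then applies and yields geodesic $(1/\theta)$-convexity of $\e$, hence of $H$, in $\mathcal{S}_{u,\theta}$.

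For the geodesic-space statement I would first observe that $\mathcal{S}_{u,\theta}=\F^\perp$ in the sense of \eqref{def:Fperp-pfsection} with $\phi(x):=|x-u|^2$ and $\F$ the finite-dimensional span of the coordinate functions $x\mapsto x_i-u_i$, $i=1,\dots,d$, together with $x\mapsto|x-u|^2-d\theta$; all of these lie in $C_\phi(\R^d)$, and $\phi-d\theta\in\F$, so the hypotheses of Theorem \ref{th:Fgeodesic} on $\F$ are met and $\ps_\phi(\R^d)=\ps_2(\R^d)$. Theorem \ref{th:Fgeodesic} then reduces the claim to showing that $\D_{\mathcal{S}_{u,\theta}}$ is finite on $\mathcal{S}_{u,\theta}\times\mathcal{S}_{u,\theta}$. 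To get this I would apply Proposition \ref{pr:Ylengthspace2} with $V(x):=\tfrac{1}{2\theta}|x-u|^2$ (plus a normalizing constant so that $e^{-V}$ is a probability density): $V$ is $(1/\theta)$-convex with constant Hessian, the associated functional $H+\int V\,\d(\cdot)$ is $\e=H(\cdot\,|\,N_{u,\theta I})$ up to an additive constant, the conditions of Theorem \ref{th:main} have just been verified, and the minimizer $y_*=N_{u,\theta I}$ of $\e$ over $\ps_2(\R^d)$ lies in $\mathcal{S}_{u,\theta}$ since its mean is $u$ and $\int|x-u|^2\,N_{u,\theta I}(\d x)=d\theta$. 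Proposition \ref{pr:Ylengthspace2} then gives $\D_{\mathcal{S}_{u,\theta}}<\infty$, and Theorem \ref{th:Fgeodesic} concludes that $\mathcal{S}_{u,\theta}$ is a geodesic space.

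I expect no serious obstacle here: the invariance ODEs are a one-line integration by parts, and the remainder is a matter of matching hypotheses. The only points requiring mild care are (i) that $\phi(x)=|x-u|^2$ has exactly the growth needed for $\F\subset C_\phi(\R^d)$ and for $\ps_\phi(\R^d)$ to coincide with $\ps_2(\R^d)$, and (ii) that Proposition \ref{pr:Ylengthspace2}, stated for $\e=H+\int V\,\d(\cdot)$, applies verbatim to $H(\cdot\,|\,N_{u,\theta I})$, which has the same gradient flow and convexity. Neither is an obstruction, merely bookkeeping.
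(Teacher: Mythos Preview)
Your proof is correct and follows essentially the same route as the paper's: the convexity via the EVI invariance principle exactly as in Section \ref{intro:se:CarlenGangbo}, and the geodesic-space claim via Theorem \ref{th:Fgeodesic} after establishing finiteness of $\D_{\mathcal{S}_{u,\theta}}$. The only notable difference is that you invoke Proposition \ref{pr:Ylengthspace2} for the finiteness step, whereas the paper cites Proposition \ref{pr:Ylengthspace}; your choice is in fact the more careful one, since Proposition \ref{pr:Ylengthspace} only yields finite-length curves between endpoints in $D(\e)$, while Proposition \ref{pr:Ylengthspace2} (applicable here because $V(x)=\tfrac{1}{2\theta}|x-u|^2$ has constant Hessian) covers all of $\mathcal{S}_{u,\theta}$ as the theorem statement requires.
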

\begin{proof}
The second paragraph of Section \ref{intro:se:CarlenGangbo} already explained the proof that  $H$ is geodesically $(1/\theta)$-convex in $\mathcal{S}_{u,\theta}$, by applying the EVI invariance principle Theorem \ref{th:main} with $\e=H(\,|\,N_{u,\theta I})$.
It remains to show that $\mathcal{S}_{u,\theta}$ is a geodesic space.
We first apply Proposition \ref{pr:Ylengthspace} to deduce that curves of finite length exist between every two points of $Y=\mathcal{S}_{u,\theta}$. We then apply Theorem \ref{th:Fgeodesic}: Take $\F$ be the set containing the function $x\mapsto |x|^2-\theta d$ and the linear functions $x \mapsto v \cdot (x-u)$, for $v$ ranging over a basis of $\R^d$. Set $\phi(x)=|x|^2$ and $c=\theta d$. Then Theorem \ref{th:Fgeodesic} applies, and we deduce both that $\D_Y=\W_\F$ and that $\mathcal{S}_{u,\theta}$ is a geodesic space.  
\end{proof}

The fact that $\{\rho \in \mathcal{S}_{u,\theta} : \rho \ll \mathrm{Leb}\}$ is a geodesic space was shown in \cite[Section 3]{carlen2003constrained} by explicitly constructing geodesics. Theorem \ref{th:CGentropy} extends this to all of $\mathcal{S}_{u,\theta}$. It is worth noting that \cite[Theorem 3.6]{carlen2003constrained} also proved strict convexity results for non-logarithmic internal energy functionals, which seem to be out of reach of the EVI invariance principle.

It is straightforward to adapt the proof to show a number of variations of Theorem \ref{th:CGentropy}.
For example, $H$ is $(1/\theta)$-convex in the submanifold of measures with the same mean and covariance matrix as $N_{u,\Sigma}$, whenever $\Sigma$ has minimal eigenvalue $\theta$. Here  $N_{u,\Sigma}$ denotes the Gaussian measure with mean $u$ and covariance matrix $\Sigma$. When $\Sigma = \mathrm{Id}$, this recovers  \cite[Corollary 5.11]{burger2025covariance}, and by applying Proposition \ref{pr:HWI} we also recover their HWI inequality for the induced metric \cite[Proposition 5.14]{burger2025covariance}.
The following theorem further generalizes this example. Along the way, we apply Theorem \ref{thm:dualExisF} to show the existence of geodesics, answering a question left open in \cite[Page 12]{burger2025covariance}.

\begin{theorem} \label{th:CGpolynomials}
Let $u \in \R^d$, and let $\Sigma$ denote a symmetric positive definite matrix with smallest eigenvalue $\theta$.
Let $q \ge 2$, and let $\mathcal{F}_q$ denote the set of polynomials on $\R^d$ of degree at most $q$ which have mean zero under $N_{u,\Sigma}$. 
Accordingly, define
\[
\mathcal{F}_q^\perp:= \bigg\{\rho \in \ps_2(\R^d) : \mathcal{F} \subset L^1(\rho), \ \int f\,\d\rho = 0, \ \forall f \in \mathcal{F}_q\bigg\}.
\]
Then $H$ is geodesically $(1/\theta)$-convex in $\mathcal{F}_q^\perp$, and $\mathcal{F}_q^\perp$ is a geodesic space.
\end{theorem}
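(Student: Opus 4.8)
The plan is to mimic the proof of Theorem~\ref{th:CGentropy}: reduce the assertion about $H$ to one about a relative entropy and feed it into the EVI invariance principle, Theorem~\ref{th:main}. Since every $\rho\in\mathcal{F}_q^\perp$ has all moments of order $\le q$ equal to those of $N_{u,\Sigma}$ — in particular $\int(x-u)^{\otimes2}\,\d\rho=\Sigma$ — the functional $\e:=H(\cdot\,|\,N_{u,\Sigma})$ differs from $H$ by an additive constant on $\mathcal{F}_q^\perp$, so it suffices to prove that $\e$ is geodesically $(1/\theta)$-convex in $\mathcal{F}_q^\perp$.

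First I would check the invariance hypothesis of Theorem~\ref{th:main}. The $\mathrm{EVI}$-flow of $\e$ is the Ornstein--Uhlenbeck evolution $\partial_t\rho=\mathrm{div}\big(\rho\,\Sigma^{-1}(x-u)\big)+\Delta\rho$, whose generator $\mathcal{L}=\Delta-\Sigma^{-1}(x-u)\cdot\nabla$ is $N_{u,\Sigma}$-reversible, negative semidefinite with kernel the constants, and preserves the grading of polynomials; its eigenfunctions are the affine images of the Hermite polynomials, those of positive degree having mean zero under $N_{u,\Sigma}$. Hence the eigenfunctions of degree $\le q$ span exactly $\mathcal{F}_q\oplus\R\cdot 1$, and since each constraint $f\in\mathcal{F}_q$ is a combination of eigenfunctions with strictly negative eigenvalues, $\tfrac{\d}{\d t}\int f\,\d\rho_t=\int\mathcal{L}f\,\d\rho_t$ decays exponentially, so the flow leaves $\mathcal{F}_q^\perp$ invariant. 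The subtle point is the convexity constant: in the standard $\W_2$ geometry the Bakry--Émery tensor of $N_{u,\Sigma}$ is $\Sigma^{-1}\ge\lambda_{\max}(\Sigma)^{-1}I$, which via Theorem~\ref{th:main} only delivers the exponent $1/\lambda_{\max}(\Sigma)$. To reach the sharp exponent $1/\theta$ I would run the whole argument in the Wasserstein space over $\R^d$ equipped with the flat Riemannian metric $g:=\theta\Sigma^{-1}$ — equivalently, apply the linear change of variables $x\mapsto\sqrt{\theta}\,\Sigma^{-1/2}(x-u)$, which maps $N_{u,\Sigma}$ to $N_{0,\theta I}$ and $\mathcal{F}_q^\perp$ to the corresponding polynomial-moment set, placing us in a genuinely isotropic, Carlen--Gangbo-type situation. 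In this geometry the Bakry--Émery tensor becomes $\Sigma^{-1}=\tfrac1\theta g$, so $\e$ is $(1/\theta)$-convex globally, the flow above is unchanged and still preserves $\mathcal{F}_q^\perp$, and Theorem~\ref{th:main} yields the claimed $(1/\theta)$-convexity of $H$ in $\mathcal{F}_q^\perp$.

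For the geodesic-space assertion, observe that $N_{u,\Sigma}$ is the global minimizer of $\e=H(\cdot\,|\,N_{u,\Sigma})$, that it belongs to $\mathcal{F}_q^\perp$, and that the reference potential $V(x)=\tfrac12(x-u)^T\Sigma^{-1}(x-u)$ is $\lambda$-convex with bounded Hessian (for the relevant $\lambda>0$). Proposition~\ref{pr:Ylengthspace2} then guarantees that $\D_{\mathcal{F}_q^\perp}$ is finite on $\mathcal{F}_q^\perp\times\mathcal{F}_q^\perp$, and Theorem~\ref{th:Fgeodesic}, applied with $\F=\mathcal{F}_q$ and a polynomial growth function $\phi$ of degree $q$ satisfying $\phi-c\in\mathcal{F}_q$, upgrades this finiteness into the existence of $\mathcal{F}_q^\perp$-geodesics (and identifies $\D_{\mathcal{F}_q^\perp}$ with the Benamou--Brenier quantity $\W_{\mathcal{F}_q}$). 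In particular the case $q=2$ answers the question of \cite{burger2025covariance} on geodesics in the covariance-constrained submanifold $\mathcal{S}_{u,\Sigma}$.

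The main obstacle I anticipate is making the sharp constant rigorous: one must either justify that Theorem~\ref{th:main}, Theorem~\ref{th:Fgeodesic} and Proposition~\ref{pr:Ylengthspace2} all transport to the rescaled geometry $\W_2^{\theta\Sigma^{-1}}$ (and that this is the geometry implicitly carried by $\D_{\mathcal{F}_q^\perp}$ here), or else find a direct $\W_2$-argument improving $1/\lambda_{\max}(\Sigma)$ to $1/\theta$ on the smaller set $\mathcal{F}_q^\perp$ — the difficulty being that $H(\cdot\,|\,N_{u,\Sigma})$ is genuinely only $\lambda_{\max}(\Sigma)^{-1}$-convex on all of $\ps_2(\R^d)$, so some feature of the constraint surface must be exploited. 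Everything else is bookkeeping parallel to Theorem~\ref{th:CGentropy}, together with care in tracking which Hermite polynomials span $\mathcal{F}_q$ when $(u,\Sigma)\neq(0,I)$.
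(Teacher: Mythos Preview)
Your overall strategy coincides with the paper's: pass from $H$ to $\e=H(\cdot\,|\,N_{u,\Sigma})$, show that the associated Ornstein--Uhlenbeck flow preserves $\mathcal{F}_q^\perp$ because the generator $L=\Delta-\Sigma^{-1}(x-u)\cdot\nabla$ maps $\mathcal{F}_q$ into itself, invoke Theorem~\ref{th:main}, and then combine a finite-length result with Theorem~\ref{th:Fgeodesic}. On the last point you use Proposition~\ref{pr:Ylengthspace2} whereas the paper uses Proposition~\ref{pr:Ylengthspace}; your choice is in fact the cleaner one, since it covers all of $\mathcal{F}_q^\perp$ and not just the absolutely continuous points in $D(H)$.

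The one substantive divergence is the convexity constant, and here you have spotted a real issue that the paper simply glosses over. With $\theta=\lambda_{\min}(\Sigma)$ as stated, one has $\Sigma^{-1}\ge(1/\lambda_{\max}(\Sigma))I$ but \emph{not} $\Sigma^{-1}\ge(1/\theta)I$, so $H(\cdot\,|\,N_{u,\Sigma})$ admits only an $\mathrm{EVI}_{1/\lambda_{\max}(\Sigma)}$-flow on $\ps_2(\R^d)$; the paper's bare assertion of an $\mathrm{EVI}_{1/\theta}$-flow is justified precisely when $\theta$ is the \emph{largest} eigenvalue, which is almost certainly the intended reading of the statement. Your proposed remedy, however, does not close the gap: the linear map $x\mapsto\sqrt{\theta}\,\Sigma^{-1/2}(x-u)$ is not a Euclidean isometry, so it transports the standard $\W_2$ to the anisotropic metric $\W_2^{\theta\Sigma^{-1}}$, and the $(1/\theta)$-convexity you obtain is with respect to the length metric induced by \emph{that} distance on $\mathcal{F}_q^\perp$, not the one induced by $\W_2$. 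There is no reason these two induced geometries should coincide (and the alternative you float---using $H(\cdot\,|\,N_{u,\theta I})$ directly in $\W_2$---fails because its Ornstein--Uhlenbeck flow drives the covariance toward $\theta I$, so it does not leave $\mathcal{F}_q^\perp$ invariant). In short: your proof delivers exactly what the paper's proof delivers, namely $(1/\lambda_{\max}(\Sigma))$-convexity via the EVI invariance principle, and the sharper constant in the theorem statement appears to be a typo rather than something either argument reaches.
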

\begin{proof}
Defining
\[
\e(\rho) := H(\rho\,|\,N_{u,\Sigma}) = H(\rho) + \frac12\int \big\langle x-u,\Sigma^{-1}(x-u)\big\rangle\,\rho(\d x) + \frac{d}{2}\log(2\pi) + \frac12\log\det\Sigma.
\]
The terms after $H(\rho)$ are constant on $\rho \in \mathcal{F}_q^\perp$, so it suffices to prove the $(1/\theta)$-convexity of $\e$ on $\mathcal{F}_q^\perp$. Define  $Lf(x) = \Delta f(x)-  \langle x,\Sigma^{-1}\nabla f(x) \rangle$. The $\mathrm{EVI}_{1/\theta}$-flow $(\rho_t)_{t \ge 0}$ for $\e$ satisfies the Fokker-Planck equation $\partial_t \rho_t = L^*\rho_t$.
To show that this EVI flow leaves $\F_q^\perp$ invariant, the key point is that $Lf \in \F_q$ for any $f \in \F_q$. Indeed, $Lf$ is clearly a polynomial of degree at most $q$, and $\int Lf\,\d N_{u,\Sigma}=0$ because $N_{u,\Sigma}$ is stationary for the Fokker-Planck equation.
The set $\F_q$ is spanned by finitely many functions, say $f_1,\ldots,f_m$, and it follows that the vector $g(t)=(\langle \rho_t,f_i\rangle)_{i=1,\ldots,m}$ satisfies a linear system of ordinary differential equations (ODEs). If $\rho_0=N_{u,\Sigma}$, then $g(t)=0$ for all $t \ge 0$ by stationarity of $N_{u,\Sigma}$. Hence, if $\rho_0$ satisfies $\int f_i\,\d\rho=0$ for all $i$, then $g(t)$ satisfies the same ODE system with the same initialization $g(0)=0$, and thus again $g(t)=0$ for all $t \ge 0$. (Note that it is straightforward to check that if $\F_q \subset L^1(\rho_0)$ then $\F_q \subset L^1(\rho_t)$ for all $t>0$.) This shows that the EVI-flow leaves $\mathcal{F}_q^\perp$ invariant, and we can apply Theorem \ref{th:main}.

It remains to show that $\F_q^\perp$ is a geodesic space.
We first apply Proposition \ref{pr:Ylengthspace} to deduce that curves of finite length exist between every two points of $Y=\F_q^\perp$. We then apply Theorem \ref{th:Fgeodesic} with $\F=\F_q$, $\phi(x)=|x|^q$, and $c=\int_{\R^d} |y|^q\,N_{u,\Sigma} (\d y)$ to deduce both that $\D_Y=\W_{\F_q}$ and that $\F_q^\perp$ is a geodesic space.  
\end{proof}

\subsection{Logarithmic energy}

In this section we study the one-dimensional logarithmic energy $\e_{\mathrm{log}} : \ps_2(\R) \to (-\infty,\infty]$,
\begin{equation}
\e_{\mathrm{log}}(\rho) := -\int_\R\int_\R\log|x-y|\,\rho(\d x) \rho(\d y). \label{def:logenergy}
\end{equation}
By convention, $\log 0 := -\infty$. It was shown by Carrillo-Ferreira-Precioso \cite[Proposition 2.7]{carrillo2012mass} that $\e_{\mathrm{log}}$ is geodesically convex in $\ps_2(\R)$, but not strictly, because (like entropy) is it translation-invariant. This convexity is not at first obvious, because $x \mapsto -\log|x|$ is not globally convex in $\R$, but the key idea is that the convexity for $x>0$ suffices thanks to the monotonicity of optimal transport maps in one dimension. Here we show that, like entropy, this convexity becomes strong on Carlen-Gangbo spheres, and in Section \ref{se:BianeVoiculescu} we discuss the resulting strengthening of a transport inequality due to Biane-Voiculescu.

\begin{theorem} \label{th:logenergy}
For any $u \in \R$ and $\theta > 0$, $\e_{\mathrm{log}}$ is geodesically $(1/\theta)$-convex in $\mathcal{S}_{u,\theta}$.
\end{theorem}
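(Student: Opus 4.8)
The plan is to run the proof of Theorem~\ref{th:CGentropy} with the relative entropy replaced by the logarithmic energy plus the same confining quadratic. Set
\[
\e(\rho) := \e_{\mathrm{log}}(\rho) + \frac{1}{2\theta}\int_\R |x-u|^2\,\rho(\d x), \qquad \rho \in \ps_2(\R).
\]
On $\mathcal{S}_{u,\theta}$ (with $d=1$) the constraint forces $\int_\R |x-u|^2\,\rho(\d x) = \theta$, so the added term equals the constant $\tfrac12$ there; hence geodesic $(1/\theta)$-convexity of $\e_{\mathrm{log}}$ in $\mathcal{S}_{u,\theta}$ is equivalent to that of $\e$, and I would obtain the latter from the EVI invariance principle, Theorem~\ref{th:main}, applied with $(X,\D_X)=(\ps_2(\R),\W_2)$, $Y=\mathcal{S}_{u,\theta}$, $\lambda=1/\theta$, and this $\e$. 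Note that $\mathcal{S}_{u,\theta}$ is a geodesic space by Theorem~\ref{th:CGentropy}, so the conclusion is not vacuous.

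To invoke Theorem~\ref{th:main} I first need the $\mathrm{EVI}_{1/\theta}$-flow of $\e$. By \cite[Proposition 2.7]{carrillo2012mass}, $\e_{\mathrm{log}}$ is geodesically convex on $\ps_2(\R)$; the same one-dimensional reasoning works along generalized geodesics, since if $\sigma$ is a base point and $\mathbf{t}_0,\mathbf{t}_1$ are the optimal (non-decreasing) maps from $\sigma$ to the endpoints, then $t\mapsto (1-t)(\mathbf{t}_0(a)-\mathbf{t}_0(b)) + t(\mathbf{t}_1(a)-\mathbf{t}_1(b))$ is affine and keeps the sign of $a-b$, so the integrand $-\log|\cdot|$ is evaluated on a half-line where $-\log$ is convex. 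As $x\mapsto\tfrac{1}{2\theta}|x-u|^2$ is $(1/\theta)$-convex, it is $(1/\theta)$-convex along generalized geodesics, and therefore so is $\e$. Moreover $\e$ is proper (finite at every Gaussian, so with $\W_2$-dense domain), lower semicontinuous, and coercive — the confining term dominates $\e_{\mathrm{log}}$ from below (via the elementary bound $-\log r \ge \log\varepsilon + 1 - \varepsilon r$) and makes sublevel sets have uniformly bounded second moments. Hence \cite[Theorem 11.2.1]{ambrosio2008gradient} yields the $\mathrm{EVI}_{1/\theta}$-flow of $\e$ starting from every point of $\ps_2(\R)$, and it is the gradient flow
\[
\partial_t\rho_t + \partial_x(\rho_t v_t) = 0, \qquad v_t(x) = 2\,\mathrm{p.v.}\!\int_\R\frac{\rho_t(\d y)}{x-y} - \frac{1}{\theta}(x-u).
\]

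It then remains to show this flow leaves $\mathcal{S}_{u,\theta}$ invariant. Given a solution $(\rho_t)_{t\ge 0}$ with $\rho_0\in\mathcal{S}_{u,\theta}$, put $f(t):=\int_\R(x-u)\,\rho_t(\d x)$ and $g(t):=\int_\R(x-u)^2\,\rho_t(\d x)-\theta$. Testing the continuity equation against $x-u$ and $(x-u)^2$ — with a routine truncation to justify the integration by parts, using $v_t\in L^2(\rho_t)$ and the boundedness of second moments along the flow — gives $f'(t)=\int_\R v_t\,\d\rho_t$ and $g'(t)=2\int_\R(x-u)v_t\,\d\rho_t$. The interaction part of $v_t$ contributes nothing to $f'$ by antisymmetry of $1/(x-y)$, and contributes $4\iint\frac{x-u}{x-y}\,\rho_t(\d x)\rho_t(\d y) = 2\iint\frac{(x-u)-(y-u)}{x-y}\,\rho_t(\d x)\rho_t(\d y) = 2$ to $g'$. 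Hence $f'(t)=-\tfrac{1}{\theta}f(t)$ and $g'(t) = 2 - \tfrac{2}{\theta}(g(t)+\theta) = -\tfrac{2}{\theta}g(t)$; since $f(0)=g(0)=0$ both vanish identically, so $\rho_t\in\mathcal{S}_{u,\theta}$ for all $t$. Theorem~\ref{th:main} then gives that $\e$, hence $\e_{\mathrm{log}}$, is geodesically $(1/\theta)$-convex in $\mathcal{S}_{u,\theta}$.

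The invariance computation and the appeal to Theorem~\ref{th:main} are the easy parts; the main obstacle is the preliminary step of placing $\e$ within the standard Wasserstein gradient-flow framework — lower semicontinuity and coercivity of the weighted logarithmic energy, its convexity along generalized geodesics, and identifying the $\mathrm{EVI}_{1/\theta}$-flow with the free Fokker--Planck evolution in a form precise enough to justify testing against quadratic functions. All of this is essentially available in \cite{carrillo2012mass} and classical logarithmic-potential theory, but it needs somewhat more care than the rest of the argument.
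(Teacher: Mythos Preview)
Your proposal is correct and follows essentially the same route as the paper: add a quadratic confinement so that the functional is $(1/\theta)$-convex along generalized geodesics, invoke \cite[Theorem 11.2.1]{ambrosio2008gradient} together with the results of \cite{carrillo2012mass} to obtain and identify the $\mathrm{EVI}$-flow, check invariance of $\mathcal{S}_{u,\theta}$ via the linear ODEs for the first two moments, and apply Theorem~\ref{th:main}. The only cosmetic difference is normalization --- the paper works with $\tfrac12\e_{\mathrm{log}}+\tfrac{1}{4\theta}\int|x|^2$ and $\lambda=1/2\theta$, doubling at the end --- and the paper is slightly more explicit about stringing together Theorems~3.2 and~3.8 of \cite{carrillo2012mass} to pass from the abstract EVI flow to the distributional PDE, a step you correctly flag as the one needing care.
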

\begin{proof}
We take $u=0$ for simplicity.
Let $\lambda=1/2\theta$, and consider the functional
\[
\e(\rho) := \frac12\e_{\mathrm{log}}(\rho) + \frac{\lambda}{2}\int_\R |x|^2\,\rho(\d x).
\]
We first argue that the $\mathrm{EVI}_\lambda$-flow for $\e$ exists in $\ps_2(\R)$ and identify it.
To achieve this, we use several results from \cite{carrillo2012mass}. First, Lemma 2.1 shows that $\e$ is lower semicontinuous in $\ps_2(\R)$, and Proposition 2.7 shows that $\e$ is $\lambda$-convex along generalized geodesics. The domain $D(\e)$ is dense in $\ps_2(\R)$, as it contains all bounded densities. As also $\e$ is clearly proper and coercive, we may apply Theorem 11.2.1 of \cite{ambrosio2008gradient}, which ensures the existence of the $\mathrm{EVI}_\lambda$-flow for $\e$ in $\ps_2(\R)$ from every starting point. Moreover, the $\mathrm{EVI}_\lambda$-flow coincides with the limit of the usual discretization (JKO) scheme. The convergence of the discretization was shown in Theorem 3.2 of \cite{carrillo2012mass} but to a different notion of gradient flow (Definition 3.1 therein), which must therefore coincide with the $\mathrm{EVI}_\lambda$-flow.
Finally, from Theorem 3.8 of \cite{carrillo2012mass} (or rather its proof) we deduce  that this $\mathrm{EVI}_\lambda$-flow $(\rho_t)_{t \ge 0}$ for $\e$ is a distributional solution of a nonlocal transport equation: for each smooth function $\varphi$ of compact support,
\begin{equation}
\frac{\d}{\d t} \int_\R \varphi\,\d \rho_t = -\lambda \int_\R x\varphi'(x)\,\rho_t(\d x) + \frac12\int_\R\int_\R\frac{\varphi'(x)-\varphi'(y)}{x-y}\,\rho_t(\d x)\rho_t(\d y). \label{eq:HilbertPDE}
\end{equation}

With the $\mathrm{EVI}_\lambda$-flow identified, we will now argue that it leaves $\mathcal{S}_{0,\theta}$ invariant. Assume $\rho_0 \in \mathcal{S}_{0,\theta}$. Because the flow $(\rho_t)_{t \ge 0}$ is contained in $\ps_2(\R)$ and continuous therein, a straightforward approximation argument extends \eqref{eq:HilbertPDE} to any smooth function $\varphi$ such that $|\varphi|$ has at most quadratic growth and $|\varphi'|$ has at most linear growth. Then, using $\varphi(x)=x$ in \eqref{eq:HilbertPDE}, we find
\begin{equation}
\frac{\d}{\d t}\int_\R x\,\rho_t(\d x) = -\lambda \int_\R x \,\rho_t(\d x).
\end{equation}
Since $\int_\R x\,\rho_0(\d x)=0$, we deduce that $\int_\R x\,\rho_t(\d x)=0$ for all $t > 0$. Use $\varphi(x)=x^2$ in \eqref{eq:HilbertPDE} to find
\begin{equation}
\frac{\d}{\d t}\bigg(\int_\R x^2\,\rho_t(\d x) - \frac{1}{2\lambda}\bigg) = -2\lambda\bigg( \int_\R x^2 \,\rho_t(\d x) - \frac{1}{2\lambda}\bigg).
\end{equation}
Since $\int_\R x^2\,\rho_0(\d x)=\theta=1/2\lambda$, the same is true for $\rho_t$ at all $t > 0$. We deduce that $\rho_t \in \mathcal{S}_{0,\theta}$ for all $t > 0$. Theorem \ref{th:main} now implies that $\e$ is geodesically $\lambda$-convex in $\mathcal{S}_{0,\theta}$. On $\mathcal{S}_{0,\theta}$, the functionals $\e$ and $\tfrac12\e_{\mathrm{log}}$ differ by an additive constant. Hence, $\e_{\mathrm{log}}$ is $2\lambda$-convex in $\mathcal{S}_{0,\theta}$, and the proof is complete.
\end{proof}

\begin{rem}
The equation \eqref{eq:HilbertPDE} is the distributional form of the PDE $\partial_t\rho = \partial_x((\lambda x -\pi\mathcal{H}(\rho))\rho)$, where $\mathcal{H}(\rho)$ is the Hilbert transform of $\rho$. This equation was studied in \cite{carrillo2012mass} (and prior work described therein) as a simplified model of of fluid mechanics. The same equation appears as the mean field limit of the Dyson-Ornstein-Uhlenbeck dynamics, as studied in \cite{rogers1993interacting} where uniqueness of \eqref{eq:HilbertPDE} was also proven.
\end{rem}

\begin{corollary}
Let $\theta > 0$ and $q \ge 2$.
Let $\F_q$ denote the set of polynomials on $\R^d$ of degree at most $q$ which have mean zero under the semicircle law $\sigma_\theta(\d x) = \frac{1}{2\pi\theta}\sqrt{(4\theta-x^2)_+}\,\d x$. 
Accordingly, define
\[
\mathcal{F}_q^\perp:= \bigg\{\rho \in \ps_2(\R^d) : \mathcal{F}_q \subset L^1(\rho), \ \int f\,\d\rho = 0, \ \forall f \in \mathcal{F}_q\bigg\}.
\]
Then $\e_{\mathrm{log}}$ is geodesically $(1/\theta)$-convex in $\mathcal{F}_q^\perp$.
\end{corollary}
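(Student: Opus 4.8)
The proof will follow the pattern of Theorem~\ref{th:CGpolynomials}, using the Dyson--Ornstein--Uhlenbeck flow \eqref{eq:HilbertPDE} in place of the Ornstein--Uhlenbeck flow. As in the proof of Theorem~\ref{th:logenergy}, put $\lambda = 1/(2\theta)$ and
\[
\e(\rho) := \tfrac12\,\e_{\mathrm{log}}(\rho) + \tfrac{\lambda}{2}\int_\R x^2\,\rho(\d x),
\]
so that the $\mathrm{EVI}_\lambda$-flow $(\rho_t)_{t \ge 0}$ of $\e$ in $\ps_2(\R)$ exists from every starting point and is the distributional solution of \eqref{eq:HilbertPDE}. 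I would record two facts about the semicircle law $\sigma_\theta$: it is the (unique) minimizer of $\e$ over $\ps_2(\R)$, hence the invariant (stationary) solution of \eqref{eq:HilbertPDE}, cf.\ \cite{rogers1993interacting,carrillo2012mass}; and, writing $m_k(\rho) := \int_\R x^k\,\rho(\d x)$, the polynomials $x^k - m_k(\sigma_\theta)$, $1 \le k \le q$, form a basis of $\F_q$ (which, being the kernel of the nonzero functional $p \mapsto \int p\,\d\sigma_\theta$ on the $(q{+}1)$-dimensional space of polynomials of degree $\le q$, has dimension $q$). Consequently $\F_q^\perp \ni \sigma_\theta$ is nonempty, is contained in $\ps_2(\R)$ since $q \ge 2$, and
\[
\rho \in \F_q^\perp \quad\Longleftrightarrow\quad m_k(\rho) = m_k(\sigma_\theta)\ \ \text{for all}\ k = 1,\dots,q.
\]

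The heart of the argument is to show that the flow leaves $\F_q^\perp$ invariant. Testing \eqref{eq:HilbertPDE} against $\varphi(x) = x^k$, $1 \le k \le q$, gives the closed autonomous system
\[
\frac{\d}{\d t}\, m_k(\rho_t) = -\lambda k\, m_k(\rho_t) + \frac{k}{2}\sum_{j=0}^{k-2} m_j(\rho_t)\, m_{k-2-j}(\rho_t), \qquad k = 1,\dots,q,
\]
with $m_0 \equiv 1$ and empty sum when $k = 1$. Its right-hand side is polynomial, hence locally Lipschitz, and the system is triangular: $\dot m_k$ depends only on $m_k$, linearly, and on $m_0,\dots,m_{k-2}$. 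Since $\sigma_\theta$ is stationary for \eqref{eq:HilbertPDE}, the vector $(m_k(\sigma_\theta))_{1 \le k \le q}$ is an equilibrium of this system. Therefore, if $\rho_0 \in \F_q^\perp$, then $(m_k(\rho_0))_{1 \le k \le q}$ is that equilibrium, and by uniqueness of ODE solutions --- or, explicitly, by solving the scalar equations for $m_1, m_2, \dots$ in turn --- we obtain $m_k(\rho_t) = m_k(\sigma_\theta)$ for all $t \ge 0$ and $k \le q$, i.e.\ $\rho_t \in \F_q^\perp$. Theorem~\ref{th:main} then yields that $\e$ is geodesically $\lambda$-convex in $\F_q^\perp$. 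Since $m_2(\rho) = m_2(\sigma_\theta) = \theta$ is constant on $\F_q^\perp$, the functionals $\e_{\mathrm{log}}$ and $2\e$ differ there by an additive constant, so $\e_{\mathrm{log}}$ is geodesically $2\lambda = 1/\theta$-convex in $\F_q^\perp$, as desired.

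The one point that requires genuine care --- and the one I expect to be the main obstacle, although it should be routine, in the spirit of the parenthetical claim in the proof of Theorem~\ref{th:CGpolynomials} --- is to legitimately test \eqref{eq:HilbertPDE} against the polynomials $x^k$ with $2 < k \le q$, which lie outside the (at most quadratic) growth class to which \eqref{eq:HilbertPDE} was extended in the proof of Theorem~\ref{th:logenergy}. The plan here is first to show that $m_q(\rho_t)$ is finite and locally bounded in $t$ whenever $m_q(\rho_0) < \infty$ (which holds because $x^q - m_q(\sigma_\theta) \in \F_q$): test \eqref{eq:HilbertPDE} against smooth truncations $\varphi_R$ of $x^k$ with $\varphi_R, \varphi_R', \varphi_R''$ bounded, bound the nonlocal term by lower moments via standard moment interpolation (those lower moments being already controlled through the base cases $m_1, m_2$, which are in the admissible class, and the triangular structure), derive a Gronwall inequality uniform in $R$, and pass to the limit $R \to \infty$. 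Once finite $q$-th moments are known to propagate, all the moment identities above are justified and the argument is complete.
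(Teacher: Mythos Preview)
Your proposal is correct and follows essentially the same approach as the paper, which dispatches the result in one sentence as ``a straightforward adaptation of the proofs of Theorems~\ref{th:CGpolynomials} and~\ref{th:logenergy}, once one observes that $\sigma_\theta$ is stationary for the PDE~\eqref{eq:HilbertPDE}.'' You have simply carried out that adaptation in detail: the explicit triangular moment recursion and the discussion of propagating higher moments are exactly the ingredients the paper is gesturing at, and your identification of the integrability issue for test functions $x^k$ with $k>2$ is the genuine technical point hidden behind the word ``straightforward.''
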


\begin{proof}
This is a straightforward adaptation of the proofs of Theorems \ref{th:CGpolynomials} and \ref{th:logenergy}, once one observes that $\sigma_\theta$ is stationary for the PDE \eqref{eq:HilbertPDE}; see, e.g., Proposition 2.3 and Remark 2.4 of \cite{carrillo2012mass}.
\end{proof}

\subsection{Eigenspaces of diffusion operators}

The following setting generalizes the arguments behind Theorem \ref{th:CGentropy} and Theorem \ref{th:CGpolynomials}. The formal idea is quite simple: Consider a diffusion operator $L =  \Delta - \nabla V \cdot\nabla$ on a Riemannian manifold $M$, for some nice function $V$.
Let $\mathcal{F}$ be a set of eigenfunctions of $L$. That is, for each $f \in \mathcal{F}$ there exists $c_f \in \R$  such that $Lf=c_ff$. The EVI flow $(\rho_t)_{t \ge 0}$ for $\e(\rho)=H(\rho)+\int V\,\d \rho$ solves  the Fokker-Planck equation $\partial_t\rho = L^*\rho$. In particular, for all $f \in \F$, $\partial_t\int f \d\rho_t = \int L f \d\rho_t = c_f\int f \d\rho_t$. If $\rho_0$ satisfies $\int f\,\d\rho_0=0$, we deduce that $\int f \d\rho_t=0$ for all $t$. That is, a submanifold of $\ps_2(M)$ defined by constraints of this form, $\int f\d\rho=0$ for all $f \in \F$, is left invariant by the EVI flow for $\e$. This lets us apply the EVI invariance principle.
In Theorem \ref{th:CGpolynomials}, the relevant eigenfunctions were the Hermite polynomials up to degree $q$, or rather transformations thereof if $(u,\Sigma)\neq(0,I)$.

The only subtle point here is integrability. It is not a priori clear that $f \in L^1(\rho_0)$ implies $f \in L^1(\rho_t)$ for $t>0$. We check this in a few cases in the following theorem.

\begin{theorem}
Assume $(M,g,\pi)$ is a weighted $d$-dimensional Riemannian manifold satisfying the assumptions given in the paragraph preceding Proposition \ref{pr:Ylengthspace2}, including the  CD($\lambda$,$n$) condition \eqref{eq:CDCondition} for some $\lambda \in \R$ and $n \in (d,\infty]$.
Let $\F \subset L^2(\pi)$ be a set of non-constant eigenfunctions of $L = \Delta - \nabla V \cdot \nabla$, where $V: M \to \R$ is smooth.
Define $\e(\rho) = H(\rho) + \int V\,\d \rho$ and
\[
\F^\perp := \bigg\{ \rho \in \ps_2(M) : \mathcal{F} \subset L^2(\rho), \ \int f\,\d\rho =0, \ \forall f \in \mathcal{F}\bigg\}.
\]
\begin{enumerate}[(1)]
\item If $M$ is compact, then $\e$ is geodesically $\lambda$-convex on $\F^\perp$.
\item If $\lambda \ge 0$ and $n < \infty$, then $\e$ is geodesically $\lambda$-convex on $\F^\perp$.
\item If $\lambda > 0$ and $n < \infty$, and $M$ is compact, then $\F^\perp$ is a complete geodesic space.
\end{enumerate}
\end{theorem}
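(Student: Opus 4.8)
The plan is to obtain (1) and (2) directly from the EVI invariance principle (Theorem~\ref{th:main}) applied to $\e=H(\cdot\,|\,\pi)$ and $Y=\F^\perp$, and to obtain (3) by feeding the resulting invariance into Proposition~\ref{pr:RiemannCD} together with the compactness criterion of Proposition~\ref{pro:geodesicY}. As a first step I would normalize so that $\pi=e^{-V}\mathrm{vol}$ is a probability measure, so that $\e=H(\cdot\,|\,\pi)$ on $\ps_2(M)$; since $CD(\lambda,n)$ implies $CD(\lambda,\infty)$, \cite[Corollary~1.5]{sturm2005convex} gives geodesic $\lambda$-convexity of $\e$ on $\ps_2(M)$, and then, exactly as in the proof of Proposition~\ref{pr:RiemannCD}, the $\mathrm{EVI}_\lambda$-flow $(\rho_t)_{t\ge0}$ of $\e$ exists from every starting point in $\ps_2(M)$ and solves the weighted heat equation $\partial_t\rho_t=L^*\rho_t$. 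The regularity needed to invoke \cite[Theorem~23.19]{villani2008optimal} ($C^{1,2}$-smoothness of $(t,x)\mapsto\rho_t(x)$ and $|\partial\e|(\rho_t)<\infty$ for $t>0$) comes from parabolic regularity and the strong minimum principle on the compact $M$ in case (1), and from the Li--Yau and heat-kernel bounds already exploited in Proposition~\ref{pr:RiemannCD} when $n<\infty$ in case (2). Writing $\rho_t=P_t\rho_0$ with $P_t\rho_0(x)=\int_M p_t(x,y)\,\rho_0(\mathrm{d}y)$ for the smooth symmetric heat kernel $p_t$ of $(M,g,\pi)$, everything then reduces to showing that this flow leaves $\F^\perp$ invariant.

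For the invariance, fix $\rho_0\in\F^\perp$ and an eigenfunction $f\in\F$ with $Lf=c_f f$. First I would check $\F\subset L^2(\rho_t)$ for $t>0$: the density $P_t\rho_0$ is bounded --- trivially on compact $M$, and by the ultracontractive estimate $\|P_t\rho_0\|_\infty\le C(1+t^{-n/2})$ (the extension of \eqref{eq:BoundSG} to measure-valued initial data used in Proposition~\ref{pr:RiemannCD}) when $n<\infty$ --- so $\int_M|f|^2\,\mathrm{d}\rho_t\le\|P_t\rho_0\|_\infty\int_M|f|^2\,\mathrm{d}\pi<\infty$ using $\F\subset L^2(\pi)$. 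Next, completeness of $(M,g)$ makes $L$ essentially self-adjoint on $C_c^\infty(M)$, so the classical $L^2(\pi)$-eigenfunction $f$ is a genuine eigenvector of the heat semigroup, $P_t f=e^{c_f t}f$. Then, by symmetry of $p_t$ and Fubini (licit since $\int_M(P_t|f|)\,\mathrm{d}\rho_0\le\|P_t|f|\|_\infty<\infty$ by ultracontractivity and $f\in L^2(\pi)\subset L^1(\pi)$),
\[
\int_M f\,\mathrm{d}\rho_t=\int_M (P_t f)\,\mathrm{d}\rho_0=e^{c_f t}\int_M f\,\mathrm{d}\rho_0=0 ,
\]
so $\rho_t\in\F^\perp$ for all $t>0$. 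Theorem~\ref{th:main} then yields the geodesic $\lambda$-convexity of $\e$ on $\F^\perp$, which is (1) and (2).

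For part (3) ($\lambda>0$, $n<\infty$, $M$ compact) the conditions of Theorem~\ref{th:main} have just been verified for $Y=\F^\perp$. The minimizer of $\e=H(\cdot\,|\,\pi)$ over $\ps_2(M)$ is $\pi$, and $\pi\in\F^\perp$ because $\int_M f\,\mathrm{d}\pi=c_f^{-1}\int_M Lf\,\mathrm{d}\pi=0$ by $\pi$-symmetry of $L$ (a non-constant eigenfunction on a connected compact manifold has $c_f<0$, so $c_f\neq 0$) and $\F\subset L^2(\pi)$ by hypothesis. Hence Proposition~\ref{pr:RiemannCD} applies with $y_*=\pi$ and gives $\D_{\F^\perp}(\rho,\rho')<\infty$ for all $\rho,\rho'\in\F^\perp$. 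To upgrade this to a complete geodesic space I would invoke Proposition~\ref{pro:geodesicY} with $\sigma$ the topology of weak convergence on $\ps(M)=\ps_2(M)$: on the compact manifold this topology is compact and metrizable, $\W_2$ is weakly lower semicontinuous, and $\F^\perp$ is weakly closed, since each $\rho\mapsto\int_M f\,\mathrm{d}\rho$ is weakly continuous (eigenfunctions are bounded and continuous on compact $M$) and the requirement $\F\subset L^2(\rho)$ is then automatic. This gives that $(\F^\perp,\D_{\F^\perp})$ is a complete geodesic space.

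I expect the invariance step in the non-compact case of (2) to be the main obstacle: the eigenfunctions may be unbounded, so both the propagation of $\F\subset L^2(\rho_t)$ and the identity $\int_M f\,\mathrm{d}\rho_t=e^{c_f t}\int_M f\,\mathrm{d}\rho_0$ hinge on the heat-kernel representation, essential self-adjointness, and the ultracontractive bounds that are available precisely because $CD(\lambda,n)$ holds with finite $n$. Everything else --- the existence and identification of the flow, and the geodesic-space argument in (3) --- is a routine assembly of Propositions~\ref{pr:RiemannCD} and~\ref{pro:geodesicY} together with the cited facts about heat flow and curvature-dimension conditions.
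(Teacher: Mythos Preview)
Your proposal is correct and follows essentially the same architecture as the paper: apply Theorem~\ref{th:main} after checking the heat flow leaves $\F^\perp$ invariant (with integrability supplied by boundedness of $\d\rho_t/\d\pi$, trivially in the compact case and via ultracontractivity when $n<\infty$), then combine Proposition~\ref{pr:RiemannCD} with Proposition~\ref{pro:geodesicY} for~(3). The only stylistic difference is that you establish $\int f\,\d\rho_t=0$ via the semigroup identity $P_t f=e^{c_f t}f$ and heat-kernel symmetry, whereas the paper uses the equivalent ODE $\partial_t\int f\,\d\rho_t=c_f\int f\,\d\rho_t$; your version has the mild advantage of not needing to justify differentiation under the integral.
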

\begin{proof}
The eigenvalues of $L$ are functions in $H^1 (\pi)$ such that $L f = c_f f$.
Thus $L f$ is in $H^1_{\mathrm{loc}}$, and the ellipticity of $L$ classically implies that $f$ belongs to $H^{1+2}_{\mathrm{loc}}$, see e.g. \cite[Theorem 6.9]{grigoryan2009heat}. 
Iterating this argument, we deduce that $f$ is smooth.
From \cite[Corollary 1.5]{sturm2005convex}, the CD($\lambda$,$\infty$)  condition implies that $\e$ is geodesically $\lambda$-convex in $\ps_2 (M)$.
The EVI$_\lambda$ flow $(\rho_t)_{t \ge 0}$ for $\e(\rho)=H(\rho)+\int V\,\d \rho$ solves  the Fokker-Planck equation $\partial_t\rho = L^*\rho$. This is shown in \cite[Corollary 23.23]{villani2008optimal} in the compact case, and   \cite[Theorem 23.19]{villani2008optimal} in the non-compact case subject to an assumption of finite Fisher information which is checked as in the proof of Proposition \ref{pr:RiemannCD}.
As explained in the paragraph preceding the theorem statement, the Fokker-Planck equation leaves $\F^\perp$ invariant, as long as we can argue that $\F \subset L^1(\rho_t)$ for all $t > 0$ when $\rho_0 \in \F^\perp$. Once this is argued, Theorem \ref{th:main} applies to yield the geodesic $\lambda$-convexity of $\e$ on $\F^\perp$.

To check this integrability point, we treat the compact and non-compact cases separately. We have already noted that each $f \in \F$ must be smooth. Hence, if $M$ is compact, then each $f \in \F$ is bounded, and thus (1) is proven. In case (3), Proposition \ref{pr:RiemannCD} applies to show that any two points in $\F^\perp$ can be connected by a curve of finite length, and the boundedness of the eigenfunctions lets us apply Proposition \ref{pro:geodesicY} (with $\sigma$ being the weak topology) to deduce that $\F^\perp$ is a complete geodesic space. Hence, (3) is proven.

To treat the non-compact setting (2), we exploit properties of the heat kernel. As was explained in the proof of Proposition \ref{pr:RiemannCD}, there exists a smooth symmetric function $p_t (x,y)$ such that $\rho_t \ll \pi$ and $\d\rho_t/\d\pi(x)=\int_M p_t(x,y)\rho_0(\d y)$ for each $t > 0$. The assumptions $\lambda \ge 0$ and $n < \infty$ imply that $p_t(x,y)$ is bounded uniformly in $(x,y)$ for each fixed $t > 0$. Hence, the density $\d\rho_t/d\pi$ is bounded, and since $\F \subset L^1(\pi)$ we deduce that $\F \subset L^1(\rho_t)$ as well.
\end{proof}

\subsection{Couplings} \label{se:Couplings}

In this section we consider spaces of couplings. Let $\lambda \in \R$. For each $i=1,\ldots,n$, let $d_i \in \N$, and let $\mu_i \in \ps_2(\R^{d_i})$ have a strictly positive probability density such that $-\log\mu_i$ is $\lambda$-convex. 
Let $d=d_1+\cdots+d_n$ and write $x=(x_1,\ldots,x_n)$ for a generic element of $\R^d \cong \R^{d_1} \times \cdots \times \R^{d_n}$.
Let $\Pi\subset \ps_2(\R^d)$ denote the set of couplings. That is, a probability measure $\rho$ on $\R^d$ belongs to $\Pi$ if
\[
\int_{\R^d} f(x_i)\,\rho(\d x) = \int_{\R^{d_i}} f(x_i)\,\mu_i(\d x_i),
\]
for each $i$ and each bounded continuous function $f$ on $\R^d$.
The induced Wasserstein geometry of $\Pi$ was discussed recently in \cite{conforti2023projected}, in the context of constrained gradient flows associated with entropic optimal transport problems. Let us note that $\Pi$ is not a geodesically convex subset of $\ps_2(\R^d)$ unless it is a singleton \cite[Proposition 2.2]{conforti2023projected}.

\begin{theorem} \label{thm:CouplingHConvex}
In the setting above, $H$ is geodesically $\lambda$-convex in $\Pi$. Moreover, if $\lambda > 0$ and each $V_i$ has bounded second derivatives, then $\Pi$ is a complete geodesic space.
\end{theorem}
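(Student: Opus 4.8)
The plan is to run the EVI invariance principle (Theorem~\ref{th:main}) on $(X,\D_X)=(\ps_2(\R^d),\W_2)$ with the functional $\e:=H(\cdot\,|\,\pi)$, where $V_i:=-\log\mu_i$, $V(x):=\sum_{i=1}^nV_i(x_i)$, and $\pi(\d x):=\prod_{i=1}^n\mu_i(\d x_i)=e^{-V(x)}\,\d x$. The Hessian of $V$ is block-diagonal with $i$-th block $\nabla^2V_i\ge\lambda I$, so $V$ is $\lambda$-convex on $\R^d$ and $\pi$ is $\lambda$-log-concave; hence $\e$ is geodesically $\lambda$-convex on $\ps_2(\R^d)$ and, by \cite[Theorem~11.2.8]{ambrosio2008gradient}, generates an $\mathrm{EVI}_\lambda$-flow which solves the Fokker--Planck equation $\partial_t\rho=\nabla\cdot(\rho\nabla V)+\Delta\rho$. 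On $\Pi$ one has $\int V\,\d\rho=\sum_i\int V_i\,\d\mu_i$, a finite constant (finiteness because the negative part of $\rho\log\rho$ is integrable for $\ps_2$-densities, and each $V_i$ is bounded below by a quadratic), so $H$ is geodesically $\lambda$-convex in $\Pi$ if and only if $\e$ is.

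The heart of the matter is to show the Fokker--Planck flow leaves $\Pi$ invariant. Fix $\rho_0\in\Pi\cap D(\e)$ --- the convexity inequality being vacuous otherwise --- and let $(\rho_t)_{t\ge0}$ be the $\mathrm{EVI}_\lambda$-flow from $\rho_0$. Since $V$ is separable, $\nabla_{x_i}V=\nabla V_i(x_i)$, so testing the Fokker--Planck equation against test functions of the single block $x_i$ (with a routine cutoff in the remaining variables) shows that the $i$-th marginal $\rho^i_t$ is a distributional solution of the decoupled equation $\partial_t\rho^i=\nabla\cdot(\rho^i\nabla V_i)+\Delta\rho^i$ with $\rho^i_0=\mu_i$; note $\W_2(\rho^i_t,\mu_i)\le\W_2(\rho_t,\rho_0)\to0$ because projecting onto a block is $1$-Lipschitz, and $(\rho^i_t)$ inherits local finiteness of energy from $(\rho_t)$. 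Since $\mu_i=e^{-V_i}$ is a stationary solution of the same equation with the same initial datum, uniqueness of locally-finite-energy solutions of the Fokker--Planck equation for a $\lambda$-convex potential --- equivalently, uniqueness of the $\mathrm{EVI}_\lambda$-flow of $H(\cdot\,|\,\mu_i)$, see \cite[Chapter~11]{ambrosio2008gradient} --- forces $\rho^i_t=\mu_i$ for all $t$. Hence $\rho_t\in\Pi$ for $t>0$, and Theorem~\ref{th:main} yields the geodesic $\lambda$-convexity of $\e$, and therefore of $H$, in $\Pi$.

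For the second assertion, assume $\lambda>0$ and that each $V_i$ has bounded second derivatives; then $V=\sum_iV_i$ is $\lambda$-convex with bounded second derivatives, so $\e=H(\cdot)+\int V\,\d\rho$ falls exactly under Proposition~\ref{pr:Ylengthspace2}. The hypotheses of Theorem~\ref{th:main} for $Y=\Pi$ are verified in the first part, and the global minimizer of $\e=H(\cdot\,|\,\pi)$ on $\ps_2(\R^d)$ is $y_*:=\pi=\bigotimes_{i=1}^n\mu_i$, which is a coupling, so $y_*\in\Pi$ and $\e(y_*)=0=\inf_X\e$. Proposition~\ref{pr:Ylengthspace2} then gives $\D_\Pi(y_1,y_2)<\infty$ for every $y_1,y_2\in\Pi$. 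Finally, we invoke Proposition~\ref{pro:geodesicY} with $\sigma$ the weak topology on $\ps(\R^d)$: $(\ps_2(\R^d),\W_2)$ is complete; $\W_2$-bounded sets have uniformly bounded second moments, hence are tight and weakly sequentially relatively compact; $\W_2$ is weakly lower semicontinuous; and $\Pi$ is weakly closed, the constraints $\int f(x_i)\,\d\rho=\int f\,\d\mu_i$ ($f\in C_b$) being stable under weak limits. Therefore $\Pi$ is a complete geodesic space.

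The main obstacle is the invariance step, and specifically the regularity bookkeeping there: in the first part $V_i$ is only $\lambda$-convex (so $\nabla V_i$ need not be globally Lipschitz or everywhere defined), so one must be careful in (i) justifying that the marginal $\rho^i_t$ solves the marginal Fokker--Planck equation and is absolutely continuous with locally finite energy, and (ii) pinning down the precise uniqueness statement to apply --- restricting to $\rho_0\in D(\e)$ and identifying $(\rho^i_t)$ with the $\mathrm{EVI}_\lambda$-flow of $H(\cdot\,|\,\mu_i)$ is the cleanest route. The constant-potential reduction, the selection of $y_*$, and the verification of the compactness hypotheses of Proposition~\ref{pro:geodesicY} are all routine.
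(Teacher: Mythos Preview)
Your proof is correct and follows essentially the same approach as the paper: reduce to $\e=H(\cdot\,|\,\pi)$ with $\pi=\bigotimes_i\mu_i$, use that the Fokker--Planck flow has separable drift so each marginal solves its own Fokker--Planck equation with stationary initial datum $\mu_i$, invoke uniqueness to conclude invariance of $\Pi$, then apply Theorem~\ref{th:main}; for the geodesic claim, combine Proposition~\ref{pr:Ylengthspace2} with Proposition~\ref{pro:geodesicY} under the weak topology. The only minor difference is that the paper cites a PDE uniqueness result (Bogachev et al.) for the marginal Fokker--Planck equation, whereas you propose identifying the marginal flow with the $\mathrm{EVI}_\lambda$-flow of $H(\cdot\,|\,\mu_i)$ and using EVI uniqueness; both routes work, and your closing paragraph correctly flags that the regularity bookkeeping for this identification (when $V_i$ is merely $\lambda$-convex) is the only place requiring care.
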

\begin{proof}
Let $\mu = \mu_1\otimes \cdots \otimes \mu_n$ denote the product measure.
Note for $\rho \in \Pi$ that
\[
H(\rho\,|\,\mu) = H(\rho) - \sum_{i=1}^nH(\mu_i),
\]
so that the $\lambda$-convexity of $H$ on $\Pi$ is equivalent to that  of $H(\cdot\,|\,\mu)$. 
Let $V_i=-\log\mu_i$ and note that $V(x)=V_1(x_1)+\cdots+V_n(x_n)=-\log\mu(x)$ is $\lambda$-convex.
The $\mathrm{EVI}_\lambda$-flow of $H(\cdot\,|\,\mu)$ on $\ps_2(\R^d)$ is governed by the Fokker-Planck equation \cite[Theorem 11.2.8]{ambrosio2008gradient},
\[
\partial_t \rho = \mathrm{div}(\rho\nabla V) + \Delta\rho.
\]
We next show that this evolution leaves $\Pi$ invariant, with an argument similar to  \cite[Lemma 3.2]{conforti2023projected}. Fix $i$ and a smooth test function $f$ on $\R^{d_i}$. Then
\begin{align}
\frac{\d}{\d t}\int_{\R^d} f(x_i)\,\rho_t(\d x) &= \int_{\R^d} \big(\Delta f(x_i) -\nabla f(x_i) \cdot \nabla V_i(x_i)\big) \,\rho_t(\d x).
\end{align}
Letting $\rho^i_t$ denote the $i$th marginal of $\rho_t$, this rewrites as
\begin{align}
\frac{\d}{\d t}\int_{\R^{d_i}} f \,\d\rho^i_t  &= \int_{\R^{d_i}} \big(\Delta f  -\nabla f  \cdot \nabla V_i \big) \,\d\rho^i_t .
\end{align}
This is the weak form of the Fokker-Planck equation $\partial_t\rho^i = \mathrm{div}(\rho^i\nabla V_i) + \Delta\rho^i$, for which $\mu^i$ is a stationary solution. If $\rho_0 \in \Pi$, then $\rho^i_0=\mu_i$, and the unique solution (e.g., by \cite[Theorem 9.6.3, Example 9.6.4]{bogachev2022fokker}) of this Fokker-Planck equation is constant in time, $\rho^i_t=\mu_i$ for all $t > 0$. Hence, $\rho_t \in \Pi$ for all $t$. Apply Theorem \ref{th:main} to deduce that $H(\cdot\,|\,\mu)$ and thus also $H$ are geodesically $\lambda$-convex in $\Pi$.

It remains to show that $\Pi$ is a geodesic space, in the case where $\lambda > 0$ and each $V_i$ has bounded second derivatives. We first note that the topological assumptions of Proposition \ref{pro:geodesicY} hold with $\sigma$ being the weak topology, as $\W_2$ is weakly lower semicontinuous and $\Pi$ is weakly compact. Moreover, we may apply Proposition \ref{pr:Ylengthspace2} to deduce that any pair of points in $\Pi$ may be connected by a finite-length curve contained in $\Pi$. Hence, Proposition \ref{pro:geodesicY} applies, and $\Pi$ is a geodesic space.
\end{proof}

\subsection{Incompressible optimal transport} \label{se:IncompressibleOT}

In this section, we apply our results to the setting of Brenier's incompressible optimal transport.
We first recall some of the history of this problem.

The Euler equations for incompressible fluids in the $d$-dimensional torus $\mathbb{T}^d$ read
\begin{equation} \label{eq:Euler}
\begin{cases}
\partial_t v + ( v \cdot \nabla ) v = - \nabla p, \\
\nabla \cdot v = 0,
\end{cases}
\end{equation}
the unknown being the velocity-pressure fields $(v,p)$ with $v : [0,1] \times \mathbb{T}^d \rightarrow \R^d$ and $p : [0,1] \times \mathbb{T}^d \rightarrow \R$.
If $v$ were smooth, it would produce a flow of diffeomorphisms $(X_t)_{t \geq 0}$ satisfying $X_0 = \mathrm{Id}$,
\[ \forall (t,a) \in [0,1] \times \mathbb{T}^d, \quad \dot{X}_t ( a ) = v_{t}(t,X_t(a)), \]
and preserving the Lebesgue measure thanks to the incompressibility condition $\nabla \cdot v = 0$.
Rewriting\footnote{We emphasize that this writing actually corresponds to the historical derivation of \eqref{eq:Euler} by Euler in his fundamental work \cite{EulerEuler}. See \cite[Chapter 2]{brenier2020examples} for more precisions.} \eqref{eq:Euler} in terms of $X_t$ yields
\begin{equation} \label{eq:EulerODE}
\begin{cases}
\ddot{X}_t (a) = - \nabla p (t, X_t (a)), \\
X_t \in \mathrm{SDiff},
\end{cases}    
\end{equation}
where $\mathrm{SDiff}$ is the space of measure-preserving diffeomorphisms on $\mathbb{T}^d$.
In the seminal work \cite{arnold1966geometrie}, Arnold interpreted \eqref{eq:EulerODE} as a geodesic equation on $\mathrm{SDiff}$, viewed as an infinite-dimensional Riemannian manifold with the metric inherited from the embedding into the flat space of $L^2$ vector fields.
This corresponds to the variational problem
\begin{equation} \label{eq:Arnold}
\inf_{(X_t)_{0 \leq t \leq 1} \subset \mathrm{SDiff} } \int_0^1 \int_{\mathbb{T}^d} \vert \dot{X}_t (a) \vert^2 \,\d a \d t, 
\end{equation}
with endpoints $X_0, X_1$ being imposed.
The pressure field $p$ can then be interpreted as a Lagrange multiplier related to the incompressibility constraint $X_t \# \mathrm{Leb} = \mathrm{Leb}$.

This constrained problem is delicate due to the lack of closedness and convexity for $\mathrm{SDiff}$. 
The pioneering works of Shnirelman \cite{shnirel1987geometry,shnirelman1994generalized} showed that the infimum in \eqref{eq:Arnold} is not attained in general for $d \geq 3$ and may be infinite for $d = 2$.

In \cite{brenier1989least}, Brenier introduced a convex relaxation of \eqref{eq:Arnold} for which existence of minimizers holds, by replacing the flow $( X_t )_{0 \leq t \leq 1}$ by a generalized flow in $\ps ( C( [0,1]; \mathbb{T}^d ))$.
In \cite{brenier1999minimal}, he introduced a further Eulerian-Lagrangian relaxation, by now known as the \emph{multiphase formulation}, which was convenient to prove existence, uniqueness and partial regularity for the pressure field.
We refer to \cite{ambrosio2010lecture,daneri2012variational} for lecture notes on these variational models as well as the textbook \cite[Chapters 2-4]{brenier2020examples}.

Given probability measures $\eta, \gamma \in \Pi ( \mathrm{Leb}, \mathrm{Leb} )$, the multiphase formulation
considers
\begin{equation} \label{eq:MultiPhase} \overline{\delta}(\eta,\gamma) := \inf \int_{\mathbb{T}^d} \int_0^1 \int_{\mathbb{T}^d} \vert v^a_t (x) \vert^2 \, c^a_t ( \d x) \d t \d a,  \end{equation}
where we minimize over continuous curves $( c_t )_{0 \leq t \leq 1}$ in $\Pi ( \mathrm{Leb}, \mathrm{Leb} )$ with $(c_0,c_1) = ( \eta, \gamma )$ with a disintegration $c_t ( \d x, \d a ) = c^a_t ( \d x ) \d a$ satisfying for a.e. $a \in \mathbb{T}^d$,
\begin{equation} \label{eq:PhaseTransp}
\partial_t c^a_t + \nabla_x \cdot ( c^a_t v^a_t ) = 0, 
\end{equation} 
in the sense of distributions, for $a$-dependent velocity vector fields $( v^a_t )_{0 \leq t \leq 1}$.
This formula amounts to superposing the Benamou-Brenier formula \eqref{intro:BenamouBrenier} for each $a$ under the global constraint $\int c^a_t \d a = \mathrm{Leb}$, justifying the terminology \emph{incompressible optimal transport}.
We refer to \cite[Section 4]{ambrosio2009geodesics} for the equivalence of the different relaxations of \eqref{eq:Arnold}.
Formal considerations \cite[Section 4]{brenier2003extended} suggest that any minimizer $(c,v)$ verifies $v^a_t = \nabla_x \varphi^a_t$ for a scalar potential $\varphi^a$ satisfying the Hamilton-Jacobi equation
\[ \partial_t \varphi^a_t (x) + \frac{1}{2} \vert \nabla_x \varphi^a_t (x) \vert^2 = - p_t (x), \]
the pressure field $( p_t )_{0 \leq t \leq 1}$ being a Lagrange multiplier for the constraint on the $x$-marginal of $c_t$ in the minimization \eqref{eq:MultiPhase}; the $a$-marginal does not matter since it is automatically preserved by \eqref{eq:PhaseTransp}.
A formal computation then yields
\[ \frac{\d^2}{\d t^2} \int_{\mathbb{T}^d} c^a_t (x) \log c^a_t (x) \d x = \int_{\mathbb{T}^d} \big(\Delta_x p_t (x) + \vert \nabla_x^2 \varphi^a_t (x) \vert^2 \big) \d c^a_t (x). \]
Defining the averaged entropy
\[ \H(c_t) := \int_{\mathbb{T}^d} H(c^a_t)\, \d a, \]
the fact that $\int c^a_t \d a = \mathrm{Leb}$
then suggests that $\frac{\d^2}{\d t^2} \H ( c_t ) \geq 0$.
Thus, \cite[Section 4]{brenier2003extended} conjectured that $\H$ is convex along the minimizers of \eqref{eq:MultiPhase}.
This conjecture was proved by Lavenant in \cite{lavenant2017time}, under some restriction later removed by Baradat and Monsaingeon \cite{baradat2020small}. 

In \cite{brenier1999minimal}, Brenier proved existence of minimizers for \eqref{eq:MultiPhase}, which are non unique in general.
In spite of this non-uniqueness, Brenier proved existence and uniqueness for the pressure field as a distribution.
More precisely, $\nabla p$ was shown to be a measure in $\M ( (0,1) \times \mathbb{T}^d )$, which only depends on the endpoints $(\eta,\gamma)$.
This result was strengthened in \cite{ambrosio2008regularity} where Ambrosio and Figalli established that $\nabla p \in L^2((0,1); \M (\mathbb{T}^d ))$, and then $p \in L^2 ( (0,1),L^{d/(d-1)} ( \mathbb{T}^d))$. 
However, the optimal regularity of the pressure remains an open question, despite a conjecture by Brenier \cite{brenier2013remarks} that $p$ is semiconcave.

Thus, the available regularity on $p$ is far from sufficient for making rigorous the above computation of $\tfrac{\d^2}{\d t^2} \H ( c_t )$.
The proofs in \cite{lavenant2017time,baradat2020small} circumvent this issue by using variational tools similar to the flow interchange method developed by Matthes, McCann and Savaré \cite{matthes2009family} to propagate bounds on geodesically convex functions along iterates of the JKO scheme.
More precisely, \cite{lavenant2017time} approximated \eqref{eq:MultiPhase} by a time-discrete problem that is suitable to apply the flow interchange method, and the result followed from the continuous limit.
And \cite{baradat2020small} directly worked at the continuous level, by regularizing geodesics via heat flow. 
This method was further developed in \cite{monsaingeon2023dynamical} to study abstract Schrödinger problems in metric spaces.

Let us now detail how Theorem \ref{th:main} allows for a simple proof of the convexity of $\H$ along the minimizers of \eqref{eq:MultiPhase}.
We consider the space $X = \Pi ( \cdot, \mathrm{Leb} )$ of probability measures on $\mathbb{T}^d \times \mathbb{T}^d$ whose second marginal is Lebesgue measure, endowed with the modified Wasserstein distance $\D_X$ given by 
\[ \D_X^2(\eta,\gamma) = \int_0^1 \W_2^2 ( \eta^a, \gamma^a ) \,\d a. \]
This distance appeared first in \cite[Proposition 12.4.6]{ambrosio2008gradient} to describe geometric tangent cones in the Wasserstein space, and its various properties (topology, completeness, separability, geodesics, gradient flows...) were systematically studied in \cite{bashiri2020gradient} and \cite{peszek2023heterogeneous}.\footnote{Strictly speaking, \cite{bashiri2020gradient} considers a slightly different setting with $\mathbb{T} \times \R$ in place of $\mathbb{T}^d \times \mathbb{T}^d$. 
However, the adaptation to our present compact setting raises no challenge  \cite[Remark 2.4]{bashiri2020gradient}.}
The metric subspace that we consider is $Y = \Pi ( \mathrm{Leb}, \mathrm{Leb})$.
From Theorem \ref{th:Fgeodesic}, we deduce that the induced length distance on $\Pi ( \mathrm{Leb}, \mathrm{Leb})$ corresponds to $\overline{\delta}$ given by \eqref{eq:MultiPhase}.
Finiteness of the distance and existence of geodesics is given by \cite{brenier1999minimal}.

\begin{theorem} \label{th:ConvexOIT}
The functional $\H$ is geodesically convex in $(\Pi(\mathrm{Leb},\mathrm{Leb}),\overline{\delta})$. 
\end{theorem}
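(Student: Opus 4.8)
The plan is to apply the EVI invariance principle, Theorem \ref{th:main}, with ambient space $X = \Pi(\cdot,\mathrm{Leb})$ endowed with the fibered distance $\D_X^2(\eta,\gamma) = \int_0^1 \W_2^2(\eta^a,\gamma^a)\,\d a$, with the subspace $Y = \Pi(\mathrm{Leb},\mathrm{Leb})$, and with the functional $\e = \H$. Since the induced length metric on $Y$ is $\overline{\delta}$ of \eqref{eq:MultiPhase} by Theorem \ref{th:Fgeodesic} (as already noted above), and since $\D_Y=\overline\delta$ is finite with $(Y,\overline\delta)$ a geodesic space by \cite{brenier1999minimal}, it suffices to check the hypotheses of Theorem \ref{th:main}: that $\H$ is bounded below with nonempty domain, and that for each $c_0 \in Y \cap D(\H)$ there is an $\mathrm{EVI}_0$-flow of $\H$ in $(X,\D_X)$ which starts at $c_0$ and leaves $Y$ invariant. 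The lower bound is clear ($H \ge 0$ on $\ps(\mathbb{T}^d)$ by Jensen's inequality, so $\H \ge 0$), and $\mathrm{Leb}\otimes\mathrm{Leb} \in Y\cap D(\H)$.

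The candidate flow is the fiberwise heat flow. Fix $c_0\in Y\cap D(\H)$ with disintegration $c_0(\d x,\d a)=c^a_0(\d x)\,\d a$; then $\H(c_0)<\infty$ forces $H(c^a_0)<\infty$ for a.e.\ $a$. Set $c^a_t:=P_tc^a_0$ for $t>0$, where $(P_t)_{t\ge 0}$ is the heat semigroup on $\mathbb{T}^d$, i.e.\ $\partial_tc^a_t=\Delta_xc^a_t$. For a.e.\ $a$, since $\mathbb{T}^d$ satisfies $CD(0,\infty)$, the curve $(c^a_t)_{t>0}$ is the $\mathrm{EVI}_0$-flow of $H$ in $(\ps_2(\mathbb{T}^d),\W_2)$ starting at $c^a_0$ (see \cite[Theorem 23.19]{villani2008optimal}, or \cite[Theorem 11.2.8]{ambrosio2008gradient} adapted to the torus). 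I would then \emph{tensorize} this: for $x\in D(\H)$ one has $x^a\in D(H)$ for a.e.\ $a$, and integrating the integrated fiberwise $\mathrm{EVI}_0$ inequality
\[
\tfrac12\W_2^2(c^a_t,x^a)-\tfrac12\W_2^2(c^a_s,x^a)\le\int_s^t\big(H(x^a)-H(c^a_r)\big)\,\d r
\]
over $a\in[0,1]$ (Fubini--Tonelli, using that $H$ is bounded below and that $a\mapsto H(c^a_r)$ is measurable) gives
\[
\tfrac12\D_X^2(c_t,x)-\tfrac12\D_X^2(c_s,x)\le\int_s^t\big(\H(x)-\H(c_r)\big)\,\d r,\qquad 0<s<t,
\]
whence the differential form \eqref{eq:EVI} with $\lambda=0$ (using that $r\mapsto\H(c_r)$ is nonincreasing and lower semicontinuous). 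The remaining requirements of the definition are routine: $t\mapsto\H(c_t)$ is nonincreasing, hence finite with $0\le\H(c_t)\le\H(c_0)$, and lower semicontinuous; $(c_t)_{t>0}$ lies in $AC^2_{\mathrm{loc}}$ since $\D_X(c_s,c_t)^2\le(t-s)\int_s^t\!\big(\int_0^1I(c^a_r)\,\d a\big)\,\d r$ with $\int_0^1I(c^a_r)\,\d a=-\tfrac{\d}{\d r}\H(c_r)$ locally integrable; and the flow starts at $c_0$ because $\D_X(c_t,c_0)^2=\int_0^1\W_2^2(c^a_t,c^a_0)\,\d a\to0$ by fiberwise convergence and dominated convergence (the torus has bounded diameter), while $\liminf_{t\downarrow0}\H(c_t)\ge\H(c_0)$ by lower semicontinuity of $\H$. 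Invariance of $Y$ is then the quick part: each $c^a_t=P_tc^a_0$ is a probability measure, so $c_t\in\Pi(\cdot,\mathrm{Leb})$, and its $x$-marginal is
\[
\int_0^1c^a_t\,\d a=\int_0^1P_tc^a_0\,\d a=P_t\!\Big(\int_0^1c^a_0\,\d a\Big)=P_t\,\mathrm{Leb}=\mathrm{Leb},
\]
using linearity of $P_t$, Fubini, the hypothesis $c_0\in\Pi(\mathrm{Leb},\mathrm{Leb})$, and the stationarity of Lebesgue measure for the heat flow on the torus. Hence $c_t\in\Pi(\mathrm{Leb},\mathrm{Leb})=Y$ for all $t>0$, and Theorem \ref{th:main} yields that $\H$ is geodesically convex in $(Y,\D_Y)=(\Pi(\mathrm{Leb},\mathrm{Leb}),\overline{\delta})$.

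I expect the main obstacle to be the tensorization step — verifying rigorously that the fiberwise heat flow is a genuine $\mathrm{EVI}_0$-flow of $\H$ in the fibered metric space $(X,\D_X)$, i.e.\ the measurability-in-$a$ and Fubini bookkeeping together with the regularity of $t\mapsto\H(c_t)$ and the $AC^2_{\mathrm{loc}}$ estimate — although this is essentially routine given the well-understood structure of this fibered Wasserstein space \cite{bashiri2020gradient,peszek2023heterogeneous}; alternatively one could simply invoke existence of the $\mathrm{EVI}_0$-flow of $\H$ directly from that literature. Everything else (invariance of $Y$, and the identification $\D_Y=\overline{\delta}$) is immediate from the structure of the problem and from Theorem \ref{th:Fgeodesic}.
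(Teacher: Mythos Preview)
Your proof is correct and follows essentially the same approach as the paper: apply the EVI invariance principle (Theorem \ref{th:main}) with $X=\Pi(\cdot,\mathrm{Leb})$ equipped with the fibered distance, $Y=\Pi(\mathrm{Leb},\mathrm{Leb})$, and $\e=\H$, checking that the fiberwise heat flow is the $\mathrm{EVI}_0$-flow of $\H$ and that it preserves $Y$ because Lebesgue is stationary under heat flow. The only cosmetic difference is that the paper directly cites \cite[Lemma 3.28 and Theorem 3.27, Theorem 3.41]{bashiri2020gradient} for the $\mathrm{EVI}_0$ property and its identification with $\partial_t c_t=\Delta_x c_t$, whereas you sketch the tensorization argument yourself before noting (correctly) that one could alternatively invoke that literature.
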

\begin{proof}
The fact that $\H$ is geodesically $0$-convex in $(X, \D_X)$ and generates a $\mathrm{EVI}_0$ flow is provided by \cite[Lemma 3.28 and Theorem 3.27]{bashiri2020gradient},
and the idea behind this is as follows: Starting from any $c_0 = c^a_0 ( \d x ) \d a$ in $\Pi ( \cdot, \mathrm{Leb} )$, the usual heat flow provides the $\mathrm{EVI}_0$ flow $( c^a_t )_{t \geq 0}$ for entropy on $(\ps_2 ( \mathbb{T}^d ),\W_2)$, for each $a$, satisfying
\[ \frac{1}{2} \frac{\d^+}{\d t} \W^2_2 (c^a_t,\mu) \leq H ( \mu ) - H ( c^a_t ), \quad \forall \mu \in \ps ( \mathbb{T}^d), \ H(\mu) < \infty, \  t >0. \]
Up to measurability issues justified in \cite{bashiri2020gradient}, integrating over $a$ then yields the desired $\mathrm{EVI}_0$ flow $(c_t)_{t \ge 0}$ in $(X, \D_X)$. 
From the characterization \cite[Theorem 3.41]{bashiri2020gradient}, such a gradient flow $(c_t)_{t \geq 0}$ is a solution of
\[ \partial_t c_t = \Delta_{x} c_t,   \]
in the sense of distributions, which is again formally clear as a consequence of the heat flow $\partial_t c^a_t = \Delta_{x} c_t^a$ for each $a$.
If $c_0$ belongs to $Y=\Pi ( \mathrm{Leb}, \mathrm{Leb})$, then $c_t$ remains in $Y$ at every time $t > 0$ because the Lebesgue measure is invariant under heat flow. 
The geodesic convexity of $\H$ in $(Y,\D_Y)=(\Pi(\mathrm{Leb},\mathrm{Leb}),\overline{\delta})$ now follows from  Theorem \ref{th:main}.
\end{proof}

\section{Strengthened transport inequalities and large deviations} \label{se:transport-LD}

The Talagrand inequality and other \emph{transport inequalities} are intimately connected to concentration of measure and large deviations. 
In this section we begin an exploration of similar connections with the inequalities arising from Proposition \ref{pr:MetricTalagrand}, which might best be called \emph{constrained transport inequalities}.
We refer to \cite{gozlan2010transport} for a thorough survey of classical transport inequalities, and we discuss here only one particular connection with large deviations for which we have a clear picture. An intriguing direction for future research would be to develop a fuller picture of the implications or equivalences of constrained transport inequalities with concentration of measure.

We work in this section on the Carlen-Gangbo sphere $\mathcal{S}:=\mathcal{S}_{0,1} \subset \ps_2(\R^d)$ defined in Section \ref{se:CarlenGangbo}. 
We discuss in Section \ref{se:TalagrandLD} a strengthening of Talagrand's inequality for the Gaussian measure and in Section \ref{se:BianeVoiculescu} a strengthening of the Biane-Voiculescu inequality for the semicircle law. 

\subsection{Talagrand's inequality} \label{se:TalagrandLD}

Carlen and Gangbo already observed (in the paragraph after \cite[Theorem 3.6]{carlen2003constrained}) how Theorem \ref{th:CGentropy} leads to the inequality
\begin{equation}
\D^2_{\mathcal{S}}(\gamma,\mu) \le 2 H(\mu\,|\,\gamma), \quad \forall \mu \in \mathcal{S}, \label{CG-Talagrand}
\end{equation}
where $\gamma=N_{0,I}$ is the standard Gaussian measure on $\R^d$. This also follows from our Proposition \ref{pr:MetricTalagrand}. Recall that $\D_{\mathcal{S}} \ge \W_2$ on $\mathcal{S}$, so that this inequality is stronger than the restriction of Talagrand's inequality to $\mathcal{S}$.
Moreover, by Proposition \ref{pr:HWI}, we have the following new variant of the HWI inequality:
\begin{equation}
H( \mu\,|\,\gamma ) - H(\nu\,|\,\gamma) \leq \D_{\mathcal{S}}( \mu,\nu) I(\mu\,|\,\gamma) - \frac{1}{2} \D_{\mathcal{S}}^2 (\mu,\nu), \quad \forall \mu,\nu \in \mathcal{S}, \ H(\nu\,|\,\gamma) < \infty.
\end{equation}
It was shown in \cite[Equation (3.33)]{carlen2003constrained} that the intrinsic distance on $\mathcal{S}$ is given explicitly by
\begin{equation}
\D_{\mathcal{S}}(\mu,\nu) = 2\sqrt{d}\arcsin\bigg(\frac{\W_2(\mu,\nu)}{2\sqrt{d}}\bigg).
\end{equation}
(Note that in \cite{carlen2003constrained} the definition of $\W_2^2$ is half of the usual definition, and we adopt the latter; hence our definition of $\D_{\mathcal{S}}^2$ is twice the definition of \cite{carlen2003constrained}.)
Using this formula, we may rewrite the inequality \eqref{CG-Talagrand} as
\begin{equation}
\alpha(\W_2^2(\gamma,\mu)) \le H(\mu\,|\,\gamma), \quad  \forall \mu \in \mathcal{S}, \label{CG-Talagrand-alpha}
\end{equation}
where we define a convex function $\alpha : \R \to [0,\infty]$ by
\[
\alpha(x) := \begin{cases}
2d\arcsin^2(\sqrt{x}/2\sqrt{d}) &\text{if } 0 \le x \le 2\sqrt{d} \\ \infty &\text{otherwise}.
\end{cases}
\]
In fact, in the next proposition, we show that the inequality \eqref{CG-Talagrand-alpha} extends in a non-trivial way from $\mathcal{S}$ to its weak closure, by taking lower semicontinuous envelopes. Let us abbreviate $m_1(\mu)=\int x\,\mu(\d x)$ and  $m_2(\mu)=\int|x|^2\,\mu(\d x)$, when these quantities are well defined.

\begin{proposition} \label{pr:lscenvelope}
For every $\mu \in \ps(\R^d)$ we have
\begin{equation*}
\alpha(\W_2^2(\gamma,\mu)) \le  I(\mu) := \begin{cases}
H(\mu\,|\,\gamma) + \frac{1}{2}(d-m_2(\mu)) &\text{if } \mu \in \ps_2(\R^n), \ m_2(\mu) \le d, \ m_1(\mu)=0 \\ \infty &\text{otherwise}. \end{cases}
\end{equation*}
Moreover, $I$ is the lower semi-continuous envelope (with respect to weak convergence) of the function  $J : \ps(\R^d) \to [0,\infty]$ defined by
\[
J(\mu) := \begin{cases} H(\mu\,|\,\gamma) &\text{if } \mu \in \mathcal{S} \\ \infty &\text{otherwise}. \end{cases}
\]
\end{proposition}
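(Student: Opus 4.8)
The plan is to establish the identification $I=\overline{J}$ first---this is the substantive part---and then to read off the displayed inequality. For the deduction: \eqref{CG-Talagrand-alpha} gives $\alpha(\W_2^2(\gamma,\mu))\le H(\mu\,|\,\gamma)=J(\mu)$ on $\mathcal{S}$, and trivially $\alpha(\W_2^2(\gamma,\mu))\le+\infty=J(\mu)$ off $\mathcal{S}$, so $\alpha(\W_2^2(\gamma,\cdot))\le J$ on $\ps(\R^d)$; since $\alpha:[0,\infty]\to[0,\infty]$ is nondecreasing and lower semicontinuous and $\mu\mapsto\W_2^2(\gamma,\mu)$ is weakly lower semicontinuous, the left-hand side is weakly lower semicontinuous, hence $\alpha(\W_2^2(\gamma,\cdot))\le\overline{J}=I$. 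For the identification itself I would use throughout the identity $H(\mu\,|\,\gamma)=H(\mu)+\tfrac12 m_2(\mu)+\tfrac d2\log(2\pi)$, which shows that on $D:=\{\mu\in\ps_2(\R^d):m_1(\mu)=0,\ m_2(\mu)\le d\}$ one has $I(\mu)=H(\mu)+\tfrac d2(1+\log2\pi)$, and that on $\mathcal{S}$ (where $m_2=d$) the function $J$ equals this same expression; I would then prove $\overline{J}\ge I$ and $\overline{J}\le I$ separately.

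The main ingredient for $\overline{J}\ge I$ is the weak lower semicontinuity of the differential entropy $H$ on $\{\mu:m_2(\mu)\le M\}$ for each fixed $M<\infty$. I would prove this by comparison with Gaussians: for $\beta>0$ set $\gamma^\beta:=N_{0,(2\beta)^{-1}I}$, so that $H(\mu)=H(\mu\,|\,\gamma^\beta)+\tfrac d2\log(\beta/\pi)-\beta\,m_2(\mu)$; if $\mu_k\to\mu_0$ weakly with $m_2(\mu_k)\le M$, then $H(\mu_k)\ge H(\mu_k\,|\,\gamma^\beta)+\tfrac d2\log(\beta/\pi)-\beta M$, and taking $\liminf_k$, using the classical weak lower semicontinuity of relative entropy, and rewriting the bound via the same identity at $\mu_0$ (legitimate since $m_2(\mu_0)\le M<\infty$) gives $\liminf_k H(\mu_k)\ge H(\mu_0)-\beta\bigl(M-m_2(\mu_0)\bigr)$; letting $\beta\downarrow0$ proves the claim (and if $H(\mu_0)=+\infty$ the same argument with $\gamma$ in place of $\gamma^\beta$ gives $\liminf_k H(\mu_k)=+\infty$). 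Granting this, I would deduce $\overline{J}\ge I$ as follows: given $\mu_k\to\mu_0$ weakly with $\liminf_k J(\mu_k)<\infty$, pass to a subsequence realizing the liminf, along which $\mu_k\in\mathcal{S}$; then $m_2(\mu_k)=d$ and $m_1(\mu_k)=0$, so uniform integrability of $|x|$ forces $m_1(\mu_0)=0$ and lower semicontinuity of $m_2$ forces $m_2(\mu_0)\le d$, hence $\mu_0\in D$; since $J=H+\tfrac d2(1+\log2\pi)$ on $\mathcal{S}$, the semicontinuity just proved gives $\liminf_k J(\mu_k)\ge H(\mu_0)+\tfrac d2(1+\log2\pi)=I(\mu_0)$.

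For $\overline{J}\le I$ it suffices to produce a recovery sequence when $I(\mu_0)<\infty$, i.e. $\mu_0\in D$ with $H(\mu_0)<\infty$. If $m_2(\mu_0)=d$ then $\mu_0\in\mathcal{S}$ and I take $\mu_k\equiv\mu_0$. If $m_2(\mu_0)<d$, I set $\mu_k:=(1-\varepsilon_k)\mu_0+\varepsilon_k\nu_k$ with $\varepsilon_k\downarrow0$ and $\nu_k:=\tfrac12 N_{-a_k e_1,I}+\tfrac12 N_{a_k e_1,I}$, choosing $a_k>0$ so that $m_2(\mu_k)=d$; this forces $a_k\to\infty$ and $\varepsilon_k\,m_2(\nu_k)=\varepsilon_k(a_k^2+d)\to d-m_2(\mu_0)$. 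Then $\mu_k\in\mathcal{S}$ by construction ($\nu_k$ is centred), $\mu_k\to\mu_0$ weakly since $\varepsilon_k\to0$, and by convexity of $H(\cdot\,|\,\gamma)$ one has $H(\mu_k\,|\,\gamma)\le(1-\varepsilon_k)H(\mu_0\,|\,\gamma)+\varepsilon_k H(\nu_k\,|\,\gamma)$. Since the density of $\nu_k$ is everywhere $\le(2\pi)^{-d/2}$ one has $H(\nu_k)\le-\tfrac d2\log(2\pi)$, and $H(\nu_k)$ stays bounded as $a_k\to\infty$ (the Gaussians have fixed unit covariance), so $\varepsilon_k H(\nu_k\,|\,\gamma)=\varepsilon_k H(\nu_k)+\tfrac12\varepsilon_k m_2(\nu_k)+\tfrac d2\varepsilon_k\log(2\pi)\to\tfrac12\bigl(d-m_2(\mu_0)\bigr)$. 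Hence $\limsup_k H(\mu_k\,|\,\gamma)\le H(\mu_0\,|\,\gamma)+\tfrac12\bigl(d-m_2(\mu_0)\bigr)=I(\mu_0)$, so $\overline{J}(\mu_0)\le\liminf_k J(\mu_k)\le I(\mu_0)$. Together with the previous paragraph this yields $\overline{J}=I$.

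I expect the main obstacle to be the weak lower semicontinuity of differential entropy under a uniform second-moment bound; the comparison with $\gamma^\beta$ followed by $\beta\downarrow0$ is designed precisely to reduce it to the textbook lower semicontinuity of relative entropy, after which the rest is moment bookkeeping and the explicit tuning of the recovery sequence. A secondary point requiring some care is the passage between weak convergence and convergence of the moments $m_1$ and $m_2$, which is exactly where the uniform bound $m_2\le d$ on $\mathcal{S}$ enters.
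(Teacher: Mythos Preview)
Your proof is correct and follows essentially the same strategy as the paper: both reduce the inequality to $\alpha(\W_2^2(\gamma,\cdot))\le\overline{J}$ via lower semicontinuity of the left side, then identify $\overline{J}=I$ by showing $I$ is weakly lower semicontinuous with $I\le J$ (yielding $I\le\overline{J}$) and by constructing a recovery sequence via a convex combination of $\mu_0$ with a centred measure of large second moment and bounded differential entropy (yielding $\overline{J}\le I$). The only differences are cosmetic: the paper cites lower semicontinuity of $H$ on $\ps_1(\R^d)$ directly rather than deriving it via your $\gamma^\beta$ comparison, and for the recovery sequence it uses $\eta_k=\mathrm{Law}\bigl((\epsilon(Z_i+\sqrt{k-1}))_{i=1}^d\bigr)$ with a random sign $\epsilon$ in place of your two-Gaussian mixture $\nu_k$, but the mechanism is identical. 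One small point you could make more explicit is why $H(\nu_k)$ stays bounded \emph{below}: the cleanest justification is the mixture bound $-H(\nu_k)\le \log 2 + (-H(N_{0,I}))$, so that $H(\nu_k)\in[H(N_{0,I})-\log 2,\,H(N_{0,I})]$ uniformly in $k$.
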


The proof is postponed to the end of the section.
Remarkably, the same functional $I$ appears as the rate function governing the large deviations of the empirical measure of $d$-coordinate blocks of a high-dimensional random vector drawn uniformly from a sphere. To state this precisely, we introduced some notation. For each $n \in \N$, let $U^n$ be uniformly distributed on the $dn$-dimensional sphere $\sqrt{dn}\,\mathbb{S}^{dn-1}$ of radius $\sqrt{dn}$. Write $U^n=(U^n_1,\ldots,U^n_n)$ where $U^n_i=(U^n_{i,j})_{j=1,\ldots,d}$. That is, the vector $U^n_i$ denotes the $i$th block of $d$ coordinates. Consider the empirical measure 
\[
L_n(U^n) = \frac{1}{n}\sum_{i=1}^n\delta_{U^n_i},
\]
viewed as a random element $\ps(\R^d)$.  Let us also define the (continuous) \emph{centering map} $\mathsf{C} : \ps_1(\R^d) \to \ps_1(\R^d)$ which translates a measure to have mean zero; i.e., $\mathsf{C} \mu := (\mathsf{Id}-m_1(\mu))\#\mu$. 
It is well known that $L_n(U^n)$ converges weakly in probability to $\gamma$, which is equivalent to the better-known statement that $(U^n_1,\ldots,U^n_k)$ converges in law as $n\to\infty$ to $\gamma^{\otimes k}$, for each fixed $k \in \N$. Since $\E|U^n_i|^2=d$, this convergence upgrades from the weak topology to $\ps_p(\R^d)$, for any $p \in [1,2)$. Hence, $\mathsf{C}L_n(U^n)$ converges in probability in $\ps_p(\R^d)$ to $\gamma$.
The following is the associated large deviations principle.

\begin{theorem} \label{th:LDP}
For any $p \in [1,2)$, the sequence $(\mathsf{C} L_n(U^n))$ satisfies a large deviation principle in $\ps_p(\R^d)$ with good rate function $I$. That is, for any closed set $F \subset \ps_p(\R^d)$ and open set $G \subset \ps_p(\R^d)$,
\begin{align}
\limsup_{n\to\infty}\frac{1}{n}\log\P\big( \mathsf{C}L_n(U^n) \in F\big) &\le -\inf_{\mu \in F}I(\mu), \\
\liminf_{n\to\infty}\frac{1}{n}\log\P\big( \mathsf{C}L_n(U^n) \in G\big) &\ge -\inf_{\mu \in G}I(\mu).
\end{align}
\end{theorem}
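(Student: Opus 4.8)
The plan is to lift the problem to iid standard Gaussians via the classical representation of the uniform law on a sphere, and then to reduce the statement, via the contraction principle, to a joint large deviation principle for an empirical measure together with its (discontinuous) second moment. First I would set up the representation. Let $G_1,\dots,G_n$ be iid with law $\gamma=N_{0,I}$ on $\R^d$, put $G^n:=(G_1,\dots,G_n)\in\R^{dn}$, and recall $U^n\stackrel{d}{=}\sqrt{dn}\,G^n/|G^n|$. With $R_n:=\tfrac1n|G^n|^2=m_2(L_n(G^n))$, a direct computation gives
\[
\mathsf{C}\, L_n(U^n)\;\stackrel{d}{=}\;\widetilde\Psi\big(L_n(G^n),R_n\big),\qquad \widetilde\Psi(\nu,t):=\Big(x\mapsto\sqrt{d/t}\,(x-m_1(\nu))\Big)\#\nu.
\]
The crucial point is that, although $m_2$ is only $\W_p$-lower semicontinuous, the map $\widetilde\Psi:\ps_p(\R^d)\times(0,\infty)\to\ps_p(\R^d)$ is $\W_p$-continuous, being a pushforward under an affine map whose coefficients depend continuously on $(\nu,t)$ through $t$ and $m_1(\nu)$. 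So it suffices to prove that $(L_n(G^n),R_n)$ satisfies an LDP in $\ps_p(\R^d)\times\R$ with a good rate function $\widetilde I$ and then identify $I(\mu)=\inf\{\widetilde I(\nu,t):\widetilde\Psi(\nu,t)=\mu\}$ with the functional of Proposition \ref{pr:lscenvelope}; the joint LDP effectively lives on $\ps_p(\R^d)\times(0,\infty)$, since $R_n>0$ a.s.\ and $\widetilde I=+\infty$ for $t\le0$, so the contraction principle applies there.

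For the joint LDP I expect $\widetilde I(\nu,t)=H(\nu\,|\,\gamma)+\tfrac12(t-m_2(\nu))$ when $0<m_2(\nu)\le t$, and $+\infty$ otherwise. Exponential tightness of $(L_n(G^n),R_n)$ follows from exponential tightness of $L_n(G^n)$ in $\W_p$ for $p<2$ (using $\int e^{\varepsilon|x|^p}\,\gamma(\d x)<\infty$) together with Cram\'er's theorem for $R_n$. For the lower bound at $(\nu,t)$ with $s:=t-m_2(\nu)\ge0$ I would use the change-of-measure proof of Sanov's theorem on a fraction $1-\varepsilon$ of the particles (tilting their law to $\nu$, at asymptotic cost $(1-\varepsilon)H(\nu\,|\,\gamma)$, with empirical second moment converging to $m_2(\nu)$), and place the remaining $\varepsilon n$ particles near the sphere of radius $\sqrt{s/\varepsilon}$ in $\R^d$; because $p<2$ this escaping fraction contributes $O(\varepsilon^{1-p/2})$ to the $\W_p$-distance and $s+o(1)$ to $R_n$, at asymptotic cost $s/2$ with negligible combinatorial correction, so that $\varepsilon\downarrow0$ yields the bound $-H(\nu\,|\,\gamma)-s/2$. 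For the upper bound I would split $R_n=R_n^{(M)}+\tfrac1n\sum_i(|G_i|^2-M)_+$ with $R_n^{(M)}:=\tfrac1n\sum_i(|G_i|^2\wedge M)$: the pair $(L_n(G^n),R_n^{(M)})$ is a $\W_p$-continuous function of $L_n(G^n)$, handled by Sanov's theorem, while the truncated tail is handled by a Chernoff bound whose rate function converges, as $M\to\infty$, to the linear map $\tau\mapsto\tau/2$ that generates the escaping-mass term. The region $t<m_2(\nu)$ receives rate $+\infty$ for free, since by $\W_p$-lower semicontinuity of $m_2$ a small neighbourhood of such a point is eventually not charged by $(L_n(G^n),m_2(L_n(G^n)))$.

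Finally I would carry out the contraction. For $\mu$ with $m_1(\mu)=0$ (the only case with $I(\mu)<\infty$) the equation $\widetilde\Psi(\nu,t)=\mu$ forces $\nu=(x\mapsto ax+v)\#\mu$ with $a=\sqrt{t/d}$ and $v\in\R^d$; substituting and using $m_1(\mu)=0$ gives $\widetilde I(\nu,t)=H(\mu\,|\,\gamma)-\tfrac12 m_2(\mu)+\tfrac d2(a^2-2\log a)$, which is independent of $v$, is finite only when $m_2(\mu)\le d$ (so the constraint $m_2(\nu)\le t$ is satisfied with $v=0$), and is minimized over $a>0$ at $a=1$ with value $\tfrac d2$. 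Hence $I(\mu)=H(\mu\,|\,\gamma)+\tfrac12(d-m_2(\mu))$ on $\{m_1(\mu)=0,\ m_2(\mu)\le d\}$ and $I(\mu)=+\infty$ otherwise, which is exactly the functional of Proposition \ref{pr:lscenvelope}; being the image of a good rate function under a continuous map, $I$ is automatically good, and the contraction principle delivers the stated LDP. The main obstacle is the joint LDP of the previous paragraph: $R_n$ is not a $\W_p$-continuous functional of $L_n(G^n)$, and the principle exhibits genuine escape of mass — $\widetilde I$ is supported strictly above the graph $\{t=m_2(\nu)\}$ — so the real work is the careful accounting of this escaping mass in both the lower and the upper bound (an alternative, equivalent in difficulty, is to route this step through a Gibbs conditioning of the $G_i$ on $\tfrac1n|G^n|^2\approx d$).
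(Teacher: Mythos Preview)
Your proposal is correct and follows essentially the same route as the paper's sketch: represent $U^n$ via iid Gaussians, establish a joint LDP for the pair $(L_n(G^n),R_n)$, and then apply the contraction principle through the continuous map $\widetilde\Psi$. The paper handles the joint LDP by invoking Cram\'er's theorem in topological vector spaces (and defers to \cite{arous2001aging,kim2018conditional} for details), whereas you give the hands-on version---a change-of-measure plus escaping-mass argument for the lower bound and a truncation argument for the upper bound---but this is the same lemma proved by a different, equally standard, technique; your treatment is in fact more explicit than the paper's about the $\W_p$-topology needed to make $m_1$ (and hence $\widetilde\Psi$) continuous, and about the escape-of-mass phenomenon that forces $\widetilde I$ to live on $\{t\ge m_2(\nu)\}$.
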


In the case $d=1$, and without the centering operation, this was shown in \cite[Theorem 6.6]{arous2001aging}, and we refer also to \cite[Proposition 3]{kim2018conditional} for a more self-contained proof where they also treated uniform measures on $\ell^p$ balls.
The idea of the proof is as follows: Let $X_i$ denote iid standard Gaussians in dimension $n$, and let $X^m=(X_1,\ldots,X_m)$. Construct the uniform on the sphere via $U^n_i=\sqrt{dn}X^n_i/|X^n|$. The pair $(L_n(X^n),\frac{1}{dn}\sum_{i=1}^n|X_i|^2)$ obeys a large deviation principle in $\ps(\R^d) \times \R$, according Cram\'er's theorem in topological vector spaces, and we apply the contraction principle to obtain the result for $\mathsf{C} L_n(U^m)$.

The appearance of $I$ as a rate function lets us characterize the transport inequality \eqref{CG-Talagrand-Jbar} in terms of large deviation probabilities. Indeed, we follow a recipe developed by Gozlan and L\'eonard \cite{gozlan2007large} to deduce the following:

\begin{proposition} \label{pr:LDP-transport}
Let $r > 0$.
For any bounded continuous functions $f$ and $g$ on $\R^d$ satisfying $f(x) - g(y) \le |x-y|^2$, we have
\begin{equation}
\limsup_{n\to\infty}\frac{1}{n}\log\P\bigg(\frac{1}{n}\sum_{i=1}^nf\Big(U^n_i - \frac{1}{n}\sum_{j=1}^nU^n_j\Big) \ge \langle\gamma,g\rangle + r\bigg) \le - \alpha(r). \label{pr:LDbound1}
\end{equation}
Moreover,  
\begin{equation}
\limsup_{n\to\infty}\frac{1}{n}\log\P\big(\W_1( \mathsf{C} L_n(U^n),\gamma) \ge r\big) \le - \alpha(r^2). \label{pr:LDbound2}
\end{equation}
\end{proposition}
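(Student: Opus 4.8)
The plan is to read both inequalities off the large deviation principle of Theorem \ref{th:LDP} and the strengthened Talagrand inequality in the form of Proposition \ref{pr:lscenvelope}, following the transport--large-deviations dictionary of Gozlan--Léonard \cite{gozlan2007large}. For \eqref{pr:LDbound1}, the first step is to recognize that the event in question is $\{\mathsf{C}L_n(U^n) \in F\}$ with $F := \{\mu \in \ps_p(\R^d) : \langle\mu,f\rangle \ge \langle\gamma,g\rangle + r\}$, since $\frac1n\sum_{i=1}^n f\big(U^n_i - \frac1n\sum_{j=1}^nU^n_j\big) = \langle \mathsf{C}L_n(U^n),f\rangle$. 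Because $f$ is bounded and continuous, $\mu \mapsto \langle\mu,f\rangle$ is weakly continuous, so $F$ is weakly closed, hence closed in the (finer) $\W_p$-topology of $\ps_p(\R^d)$. The upper bound in Theorem \ref{th:LDP} then gives $\limsup_n \frac1n\log\P(\mathsf{C}L_n(U^n)\in F) \le -\inf_{\mu\in F}I(\mu)$, reducing \eqref{pr:LDbound1} to the claim $\inf_{\mu\in F}I(\mu) \ge \alpha(r)$. To prove this, fix $\mu \in F$; we may assume $\mu \in \ps_2(\R^d)$, else $I(\mu)=\infty$ trivially. For any coupling $\pi$ of $(\mu,\gamma)$, the pointwise bound $f(x)-g(y)\le|x-y|^2$ yields $\langle\mu,f\rangle - \langle\gamma,g\rangle = \int (f(x)-g(y))\,\pi(\d x,\d y) \le \int |x-y|^2\,\pi(\d x,\d y)$; taking the infimum over $\pi$ gives $r \le \langle\mu,f\rangle - \langle\gamma,g\rangle \le \W_2^2(\mu,\gamma)$. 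Since $\alpha$ is nondecreasing on $[0,\infty)$ (it is increasing on $[0,2\sqrt d]$ and equal to $+\infty$ beyond), Proposition \ref{pr:lscenvelope} then gives $I(\mu) \ge \alpha(\W_2^2(\gamma,\mu)) \ge \alpha(r)$, as desired.

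For \eqref{pr:LDbound2}, the argument is identical with $F := \{\mu \in \ps_p(\R^d) : \W_1(\mu,\gamma) \ge r\}$. This set is closed in $\ps_p(\R^d)$ because $\W_1(\cdot,\gamma)$ is continuous there: convergence in $\W_p$ implies convergence in $\W_1$ (as $\W_1 \le \W_p$), and $\W_1(\cdot,\gamma)$ is $\W_1$-continuous. The upper bound of Theorem \ref{th:LDP} again reduces matters to $\inf_{\mu\in F}I(\mu) \ge \alpha(r^2)$, and for $\mu \in F$ (again assuming $\mu\in\ps_2$) we have $r \le \W_1(\mu,\gamma) \le \W_2(\mu,\gamma)$ by the Cauchy--Schwarz inequality, so $\W_2^2(\gamma,\mu) \ge r^2$, whence $I(\mu) \ge \alpha(\W_2^2(\gamma,\mu)) \ge \alpha(r^2)$ by monotonicity of $\alpha$ and Proposition \ref{pr:lscenvelope}.

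No serious obstacle is anticipated: the proof is a short composition of Theorem \ref{th:LDP} and Proposition \ref{pr:lscenvelope}. The only mild technical points to record are (i) that the relevant sublevel/superlevel sets are closed in the topology of $\ps_p(\R^d)$ in which the LDP is stated, which follows from weak continuity of $\mu\mapsto\langle\mu,f\rangle$ and $\W_p$-continuity of $\mu\mapsto\W_1(\mu,\gamma)$, and (ii) that $\alpha$ is nondecreasing on $[0,\infty)$, so that $\W_2^2(\gamma,\mu)\ge r$ implies $\alpha(\W_2^2(\gamma,\mu)) \ge \alpha(r)$.
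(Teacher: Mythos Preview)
Your proof is correct and follows essentially the same route as the paper: apply the large deviation upper bound of Theorem \ref{th:LDP} to the appropriate closed set, then use the constraint $f(x)-g(y)\le|x-y|^2$ (resp.\ $\W_1\le\W_2$) to pass to a $\W_2^2$ lower bound, and conclude via Proposition \ref{pr:lscenvelope} and monotonicity of $\alpha$. Your version is in fact slightly more careful than the paper's in explicitly verifying closedness of the sets $F$ in the $\W_p$-topology.
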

\begin{proof}
Using the large deviations upper bound followed by Kantorovich duality, the left-hand side of \eqref{pr:LDbound1} is bounded from above by
\begin{align*}
-\inf\Big\{I(\mu) : \mu \in \ps(\R^d), \, \langle \mu,f\rangle-\langle\gamma,g\rangle \ge r\Big\} \le -\inf\Big\{I(\mu) : \mu \in \ps(\R^d), \, \W_2^2(\mu,\gamma) \ge r\Big\}.
\end{align*}
Using Proposition \ref{pr:lscenvelope}, this is further bounded by $-\alpha(r)$ because $\alpha(\cdot)$ is increasing. Similarly, the left-hand side of \eqref{pr:LDbound2} is bounded from above by
\begin{align*}
-\inf\Big\{I(\mu) : \mu \in \ps(\R^d), \, \W_1(\mu,\gamma) \ge r\Big\},
\end{align*}
which by Proposition \ref{pr:lscenvelope} and Jensen is bounded by $-\alpha(r^2)$.
\end{proof}

Notice that \eqref{pr:LDbound2} implies that for any $1$-Lipschitz function $f$ on $\R^d$,
\begin{equation}
\limsup_{m\to\infty}\frac{1}{n}\log\P\bigg(\frac{1}{n}\sum_{i=1}^n f\Big(U^n_i - \frac{1}{n}\sum_{j=1}^nU^n_j\Big)   \ge \langle\gamma,f\rangle + r\bigg) \le - \alpha(r^2). \label{pr:LDbound3}
\end{equation}
Noting that $\arcsin(x)\ge x$ for $0 \le x \le 1$, we have $\alpha(r^2) \ge r^2/2$ for $r >0$. Proposition \ref{pr:LDP-transport} thus shows that the large deviations of $\mathsf{C}L_n(U^n)$ are more rare than the analogous ones for the empirical measure of iid Gaussians.
As in \cite[Theorem 2]{gozlan2007large}, the arguments of Proposition \ref{pr:LDP-transport} can be reversed, by using the \emph{lower bound} of the large deviation principle. The statement that \eqref{pr:LDbound1} holds for all such $f$ and $g$ and $r$ is \emph{equivalent} to the inequality \eqref{CG-Talagrand-Jbar}. Similarly, the weaker inequality $\alpha(\W_1^2(\gamma,\cdot)) \le I$ is equivalent to the statement that \eqref{pr:LDbound3} holds for all $r$ and all $1$-Lipschitz  $f$.
Analogously, the classical Talagrand inequality $\W_2^2(\mu,\gamma) \le 2H(\mu\,|\,\gamma)$ for all $\mu \in \ps(\R^d)$ is equivalent to the statement that 
\begin{equation}
\limsup_{n\to\infty}\frac{1}{n}\log\P\bigg(\frac{1}{n}\sum_{i=1}^nf(X_i) \ge \langle\gamma,g\rangle + r\bigg) \le - r/2,
\end{equation}
for all $r>0$ and bounded continuous functions $f$ and $g$ on $\R^d$ satisfying $f(x) - g(y) \le |x-y|^2$.
And the weaker \emph{T1 inequality} $\W_1^2(\mu,\gamma) \le 2H(\mu\,|\,\gamma)$ for all $\mu \in \ps(\R^d)$ is equivalent to the statement that 
\begin{equation}
\limsup_{n\to\infty}\frac{1}{n}\log\P\bigg(\frac{1}{n}\sum_{i=1}^nf(X_i) \ge \langle\gamma,f\rangle + r\bigg) \le - r^2/2,
\end{equation}
for all $r>0$ and $1$-Lipschitz functions $f$ on $\R^d$.

It is worth comparing Proposition \ref{pr:LDP-transport} to known subgaussian estimates for $U^n$.
For example, for $d=1$ and the identity function $f(x)=x$, there is a well-known concentration inequality \cite[Proof of Theorem 3.4.5]{vershynin2025high}
\[
\P\bigg(\frac{1}{n}\sum_{i=1}^n U^n_i \ge r\bigg) = \P(U^n_1/\sqrt{n} \ge r ) \le (1- r^2)^{(n-1)/2}, \quad 0 \le r \le 1.
\]
Since $-(1/2)\log(1-r^2) \ge \alpha(r^2)$ for $d=1$, this is a sharper tail bound than \eqref{pr:LDbound3} (although they are not directly comparable because of the centering). However, this is concentration estimate just for the sample average, whereas Proposition \ref{pr:LDP-transport} is at a functional level.

It is natural to wonder if the developments of this section can be generalized beyond the example of the Carlen-Gangbo sphere. Let $\mathcal{M} \subset \ps_2(\R^d)$ and $c > 0$, and suppose $\pi \in \ps_2(\R^d)$ has been shown to satisfy $\D_{\mathcal{M}}^2(\mu,\pi) \le 2cH(\mu\,|\,\pi)$ for all $\mu \in \mathcal{M}$. In light of the Gibbs conditioning principle, we speculate that this inequality is related to the large deviation behavior of the empirical measure of iid samples of $\pi$ \emph{conditioned to belong to $\mathcal{M}$}. To avoid conditioning on a measure-zero set, one should more rigorously perform a set enlargement, as is standard in Gibbs conditioning arguments. The connection with the previous example is that the uniform measure on a sphere provides a version of the conditional law of the Gaussian given its norm. It appears to be challenging to formulate a rigorous picture in the general case, so we leave this for future work.

\begin{proof}[Proof of Proposition \ref{pr:lscenvelope}]
We first note that the inequality \eqref{CG-Talagrand-alpha} trivially implies
\begin{equation}
\alpha(\W_2^2(\gamma,\mu)) \le J(\mu), \quad  \forall \mu \in \ps(\R^d). \label{CG-Talagrand-J}
\end{equation}
Next, let us define the lower semicontinuous envelope of $J$,
\[
\overline{J}(\mu) := \liminf_{\mu' \to \mu}J(\mu').
\]
Equivalently, this is the pointwise supremum of all weakly lower semicontinuous functions dominated  by $J$. 
Because $\W_2(\gamma,\cdot)$ is lower semicontinuous and $\alpha$ is increasing on $[0,\infty)$, the inequality \eqref{CG-Talagrand-J} is equivalent to
\begin{equation}
\alpha(\W_2^2(\gamma,\mu)) \le \overline{J}(\mu), \quad  \forall \mu \in \ps_2(\R^d). \label{CG-Talagrand-Jbar}
\end{equation}
Hence, the proof will be complete once we show that $I \equiv \overline{J}$.

We first show that $I$ is lower semicontinuous. To see this, note first that a simple calculation shows 
\[
I(\mu) = \begin{cases}
H(\mu) + \frac{d}{2} + \frac{d}{2}\log(2\pi)  &\text{if } \mu \in \ps_2(\R^d), \ m_2(\mu) \le d, \ m_1(\mu)=0 \\ \infty &\text{otherwise}. \end{cases}
\]
It is well known that $H(\cdot)$ is lower semicontinuous on $\ps_1(\R^d)$. The topologies of $\ps_1(\R^d)$ and $\ps(\R^d)$ agree on the set $\{\mu \in \ps_2(\R^d): m_2(\mu) \le 1, \, m_1(\mu)=0\}$, which is weakly compact. (In fact, it is the closure of $\mathcal{S}$ in $\ps_1(\R^d)$.) It follows that $I$ is lower semicontinuous. Since $I \le J$ pointwise, it follows that $I \le \overline{J}$ as well, by definition of $\overline{J}$.

To show that $I \ge \overline{J}$, we will fix $\mu \in \ps(\R^d)$ and show that there exists a sequence $(\mu_k)$ converging weakly to $\mu$ and satisfying $\limsup_kJ(\mu_k) \le I(\mu)$.
We may assume that $I(\mu) < \infty$, so that $m_2(\mu) \le d$ and $m_1(\mu)=0$.
For $k \in \N$, let $\eta_k\in\ps_2(\R^d)$ denote the law of  the random vector $(\epsilon(Z_i+\sqrt{k-1}))_{i=1,\ldots,d}$, where $Z_i$ are iid standard normals (in dimension 1) and $\epsilon$ is uniform on $\{-1,+1\}$. Note that $m_1(\eta_k)=0$ and $m_2(\eta_k)=dk$.
Define $\mu_k = c_k\mu + (1-c_k)\eta_k$, where we choose $c_k = (dk-d)/(dk-m_2(\mu))$ so that $m_2(\mu_k)=d$. Of course $m_1(\mu_k)=0$ as well, so $\mu_k \in \mathcal{S}$.
Noting that $c_k \in [0,1]$, the convexity of entropy yields
\begin{align*}
J(\mu_k) &= H(\mu_k) + \frac{d}{2} + \frac{d}{2}\log(2\pi) \le c_kH(\mu) + (1-c_k)H(\eta_k) + \frac{d}{2} + \frac{d}{2}\log(2\pi).
\end{align*}
By convexity and translation-invariance of entropy, we have $H(\eta_k) \le H(\gamma) < \infty$ for each $k$.
Since $c_k\to 1$ as $k\to\infty$, it follows that
\begin{align*}
\limsup_{k\to\infty} J(\mu_k) &\le H(\mu) + \frac{d}{2} + \frac{d}{2}\log(2\pi) \\
    &= H(\mu) + \frac12\int|x|^2\,\mu(\d x) + \frac{d}{2}\log(2\pi) + \frac{1}{2}(d-m_2(\mu)) \\
    &= H(\mu\,|\,\gamma) + \frac{1}{2}(d-m_2(\mu)).
\end{align*}
The right-hand side is $I(\mu)$, and the proof is complete.
\end{proof}

\subsection{Biane-Voiculescu inequality} \label{se:BianeVoiculescu}

The logarithmic energy $\e_{\mathrm{log}}$ defined in \eqref{def:logenergy} is ubiquitous in random matrix theory. It was shown in \cite{arous1997large} that 
\[
I_{\mathrm{log}}(\mu) :=\e_{\mathrm{log}}(\rho) + \frac12\int_\R x^2\,\rho(\d x) - \frac34
\]
is the rate function governing the large deviations of the empirical eigenvalue distribution for Wigner matrices.
The unique minimizer of $I_{\mathrm{log}}$ is given by the semicircle law,
\[
\sigma(\d x) = \frac{1}{2\pi}\sqrt{(4-x^2)_+}\,\d x.
\]
The minimal value is $I_{\mathrm{log}}(\sigma)=0$, because $\int x^2 \,\sigma(\d x) = 1$ and $\e_{\mathrm{log}}(\sigma)=1/4$.

Biane and Voiculescu showed in \cite[Theorem 2.8, Remark 2.9]{biane2001free} that
\begin{equation}
\W_2^2(\sigma,\mu) \le 2I_{\mathrm{log}}(\mu), \quad \forall \mu \in \ps_2(\R).
\end{equation}
This can be viewed as the analogue of Talagrand's inequality in the theory of \emph{free probability}, and we refer to \cite[Section 12]{gozlan2010transport} for further discussion and references.
Combining Theorem \ref{th:logenergy} with Proposition \ref{pr:MetricTalagrand}, we obtain the following reinforcement of this inequality:

\begin{proposition} \label{pr:BianeVoiculescu}
For all $\mu \in \mathcal{S}_{0,1}$ we have
\begin{equation}
4\arcsin^2\Big(\frac12 \W_2(\mu,\sigma)\Big) \le 2I_{\mathrm{log}}(\mu).
\end{equation}
\end{proposition}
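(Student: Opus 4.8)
The plan is to combine the geodesic $1$-convexity of $\e_{\mathrm{log}}$ on $\mathcal{S}_{0,1}$ (Theorem \ref{th:logenergy} with $\theta=1$) with the metric Talagrand inequality of Proposition \ref{pr:MetricTalagrand}, and then convert the resulting bound on the intrinsic distance $\D_{\mathcal{S}_{0,1}}$ into one on $\W_2$ via the explicit Carlen--Gangbo formula quoted above, which in dimension $d=1$ reads $\D_{\mathcal{S}_{0,1}}(\mu,\nu) = 2\arcsin\!\big(\tfrac12\W_2(\mu,\nu)\big)$.

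Concretely, I would set $\lambda = 1/2$ and, exactly as in the proof of Theorem \ref{th:logenergy} (taking $u=0$, $\theta=1$), introduce $\e(\rho) := \tfrac12\e_{\mathrm{log}}(\rho) + \tfrac14\int_\R x^2\,\rho(\d x)$. That proof shows the $\mathrm{EVI}_{1/2}$-flow of $\e$ exists on $\ps_2(\R)$ and leaves $Y := \mathcal{S}_{0,1}$ invariant, so the hypotheses of Theorem \ref{th:main} hold with this $\e$, this $\lambda$, and this $Y$; moreover $Y=\mathcal{S}_{0,1}$ is a geodesic space by Theorem \ref{th:CGentropy}. On $\mathcal{S}_{0,1}$ the second moment equals the constant $1$, so there $\e = \tfrac12\e_{\mathrm{log}} + \tfrac14 = \tfrac12 I_{\mathrm{log}} + \tfrac38$. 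Since $\sigma$ is the unique global minimizer of $I_{\mathrm{log}}$ on $\ps_2(\R)$ with $I_{\mathrm{log}}(\sigma)=0$, and $\sigma\in\mathcal{S}_{0,1}$ (mean zero, unit variance), the infimum of $\e$ over $\mathcal{S}_{0,1}$ is attained at $y_*=\sigma$. Proposition \ref{pr:MetricTalagrand} then gives, for all $\mu\in\mathcal{S}_{0,1}$,
\[
\D_{\mathcal{S}_{0,1}}^2(\mu,\sigma) \le \frac{2}{\lambda}\big(\e(\mu)-\e(\sigma)\big) = 4\big(\e(\mu)-\e(\sigma)\big).
\]
Using again that $m_2$ is constant on $\mathcal{S}_{0,1}$ and that $I_{\mathrm{log}}(\sigma)=0$, one has $\e(\mu)-\e(\sigma) = \tfrac12\big(\e_{\mathrm{log}}(\mu)-\e_{\mathrm{log}}(\sigma)\big) = \tfrac12\big(I_{\mathrm{log}}(\mu)-I_{\mathrm{log}}(\sigma)\big) = \tfrac12 I_{\mathrm{log}}(\mu)$, so that $\D_{\mathcal{S}_{0,1}}^2(\mu,\sigma) \le 2 I_{\mathrm{log}}(\mu)$.

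Finally I would substitute $\D_{\mathcal{S}_{0,1}}(\mu,\sigma) = 2\arcsin\!\big(\tfrac12\W_2(\mu,\sigma)\big)$, so that $\D_{\mathcal{S}_{0,1}}^2(\mu,\sigma) = 4\arcsin^2\!\big(\tfrac12\W_2(\mu,\sigma)\big)$, yielding exactly the claimed inequality. There is no genuine obstacle here: the argument is a direct assembly of Theorems \ref{th:logenergy} and \ref{th:CGentropy}, Proposition \ref{pr:MetricTalagrand}, and the Carlen--Gangbo distance formula. The only points needing a little care are (i) checking that the minimizer of $\e$ over $\mathcal{S}_{0,1}$ is $\sigma$ — which, because the second moment is pinned on $\mathcal{S}_{0,1}$, reduces to the known minimality of $\sigma$ for $I_{\mathrm{log}}$ — and (ii) bookkeeping the factors $\tfrac12$ relating $\e$, $\e_{\mathrm{log}}$ and $I_{\mathrm{log}}$ on $\mathcal{S}_{0,1}$ so that the constant in front of $I_{\mathrm{log}}$ comes out as $2$.
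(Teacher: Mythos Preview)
Your proposal is correct and follows essentially the same route as the paper's proof: invoke the geodesic $1$-convexity of $\e_{\mathrm{log}}$ (equivalently $I_{\mathrm{log}}$) on $\mathcal{S}_{0,1}$ from Theorem \ref{th:logenergy}, note that $\sigma\in\mathcal{S}_{0,1}$ is the minimizer, apply Proposition \ref{pr:MetricTalagrand}, and then substitute the Carlen--Gangbo formula for $\D_{\mathcal{S}_{0,1}}$. The only cosmetic difference is that the paper applies Proposition \ref{pr:MetricTalagrand} directly with the functional $I_{\mathrm{log}}$ and $\lambda=1$ (relying implicitly on the fact that only geodesic convexity is used in its proof), whereas you more carefully work with the auxiliary $\e=\tfrac12\e_{\mathrm{log}}+\tfrac14 m_2$ and $\lambda=1/2$ so that the EVI hypotheses of Theorem \ref{th:main} are literally satisfied, and then convert; the outcome is identical.
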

\begin{proof}
Theorem \ref{th:logenergy} (with $\theta=1$) shows that the functional $I_{\mathrm{log}}(\cdot)$ is $1$-convex in $\mathcal{S}_{0,1}$. Because $\sigma$ is the unique minimizer of $I_{\mathrm{log}}$ on $\ps_2(\R)$, it is also the unique minimizer on $\mathcal{S}_{0,1}$. Using Proposition \ref{pr:MetricTalagrand} along with $I_{\mathrm{log}}(\sigma)=0$ proves the claim.
\end{proof}

In the spirit of Section \ref{se:TalagrandLD}, it is natural to expect the inequality \eqref{pr:BianeVoiculescu} to relate to the aforementioned large deviations principle for Wigner matrices. The Gaussian Unitary Ensemble (GUE) is the probability measure on the space $\mathcal{H}_n$ of $n \times n$ Hermitian matrices whose density with respect to Lebesgue measure is proportional to $H \mapsto \exp(- \mathrm{tr}(H^2)/2)$. The associated \emph{fixed trace ensemble}, formally obtained by conditioning on  $\mathrm{tr}(H^2)=n^2$, has the same expression for its density relative to the surface measure on $\{H \in \mathcal{H}_n : \mathrm{tr}(H^2)=n^2\}$. Let $M_n$ be a random matrix drawn from this fixed trace ensemble, and let $\lambda^n_1 < \cdots < \lambda^n_n$ denote the eigenvalues of $M_n/\sqrt{n}$. Consider the centered empirical measure
\[
\mathsf{C}L_n = \frac{1}{n}\sum_{i=1}^n\delta_{\lambda^n_i - \frac{1}{n}\sum_{j=1}^n\lambda^n_j}.
\]
We conjecture that $\mathsf{C}L_n$ satisfies a large deviation principle in $(\ps_1(\R),\W_1)$ with good rate function
\[
\mu \mapsto \begin{cases}
    I_{\mathrm{log}}(\mu) + \frac12\big(1 - m_2(\mu)\big) &\text{if } m_1(\mu)=0, \ m_2(\mu)\le 1, \\ \infty &\text{otherwise},
\end{cases}
\]
where again $m_p(\mu)=\int_\R x^p\,\mu(\d x)$ for $p \ge 1$. It is not difficult to show (similarly to Proposition \ref{pr:lscenvelope}) that this conjectural rate function is the weakly lower-semicontinuous envelope of 
\[
\mu \mapsto \begin{cases}
    I_{\mathrm{log}}(\mu) &\text{if } \mu \in \mathcal{S}_{0,1} \\ \infty &\text{otherwise}.
\end{cases}
\]
With this large deviation principle in hand, it would follow as in Proposition \ref{pr:LDP-transport} that
\[
\limsup_{m\to\infty}\frac{1}{n}\log\P\big(\W_1( \mathsf{C} L_n,\sigma) \ge r\big) \le - 2\arcsin^2(r/2),
\]
for $0 < r \le 2$, with the right-hand side replaced by $-\infty$ for $r > 2$.
A simpler consequence in terms of Lipschitz test functions may be stated analogously to \eqref{pr:LDbound3}.
This would mean that, similarly to the setting of Section \ref{se:TalagrandLD}, conditioning the trace of the GUE improves the concentration profile from $r^2/2$ to $2\arcsin^2(r/2)$.

\appendix

\section{An alternate proof of the EVI invariance principle} \label{se:alternateproof}

Here we sketch an alternative proof of the geodesic convexity in $Y$ from Theorem \ref{th:main}, based on the methods of \cite{baradat2020small} and especially \cite{monsaingeon2023dynamical}. We mostly just repeat the argument of \cite[Theorem 4.8]{monsaingeon2023dynamical},
with one point of departure. Let $(\gamma_t)_{t \in [0,1]}$ be a $Y$-geodesic with endpoints assumed to be in $D(\e)$ without loss of generality. We will use semigroup notation to denote EVI flows: For each $y\in Y \cap D(E)$, let $(\mathsf{S}_ty)_{t > 0}$ denote the EVI$_\lambda$-flow of $\e$ in $(X,\D_X)$ started from $y$. Fix $\theta \in (0,1)$ and define
\[
h_\epsilon(t) := \begin{cases}
\epsilon t/\theta &\text{if } t \in [0,\theta] \\ \epsilon (1-t)/(1-\theta) &\text{if } t \in [\theta,1]. \end{cases}
\]
Let $\gamma^\epsilon_t := \mathsf{S}_{h_\epsilon(t)}\gamma_t$. This curve is a perturbation of the geodesic $\gamma$ \emph{in the direction of the gradient flow}, and it featured prominently in the works \cite{baradat2020small,monsaingeon2023dynamical}.  
We will take as an input a key calculation of \cite[Proposition 3.11]{monsaingeon2023dynamical}, after discarding the $|\partial \e|$ term therein:
\begin{equation}
\frac12|\dot\gamma^\epsilon|_t^2 + h'_\epsilon(t)\frac{\d}{\d t}\e(\gamma^\epsilon_t) \le \frac12 e^{-2\lambda h_\epsilon(t)}|\dot\gamma|_t^2, \quad a.e. \ t \in (0,1). \label{ineq:MTV1}
\end{equation}
The derivation of this inequality comes from an application of the definition of EVI$_\lambda$ flows with carefully chosen reference points \cite[Lemma 3.6]{monsaingeon2023dynamical}.
Now, since $\gamma$ is a $Y$-geodesic, we have $|\dot\gamma|_t=\D_Y(\gamma_0,\gamma_1)=:D$ for a.e.\ $t \in [0,1]$. The main assumption that $\mathsf{S}_t$ leaves $Y$ invariant now comes into play: It implies that the curve $(\gamma^\epsilon_t)_{t \in [0,1]}$ is contained in $Y$, and so the definition of $\D_Y$ yields
\[
D^2 \le \Big(\int_0^1|\dot\gamma^\epsilon|_t\,\d t\Big)^2 \le  \int_0^1|\dot\gamma^\epsilon|_t^2\,\d t.
\]
Integrating \eqref{ineq:MTV1}, we find
\begin{align}
0 &\le \frac12 \int_0^1|\dot\gamma^\epsilon|_t^2\,\d t - \frac{D^2}{2} \le -\int_0^1 h'_\epsilon(t)\frac{\d}{\d t}\e(\gamma^\epsilon_t)\,\d t  + \frac{D^2}{2}I_\epsilon,
\end{align}
where we set $I_\epsilon := \int_0^1\big(e^{-2\lambda h_\epsilon(t)}-1\big)\,\d t$. Splitting the first integral into $[0,\theta]$ and $[\theta,1]$,
\begin{align}
0 &\le -\frac{\epsilon}{\theta}\int_0^\theta \frac{\d}{\d t}\e(\gamma^\epsilon_t)\,\d t + \frac{\epsilon}{1-\theta}\int_\theta^1 \frac{\d}{\d t}\e(\gamma^\epsilon_t)\,\d t + \frac{D^2}{2}I_\epsilon \\
    &= \frac{\epsilon}{\theta}\big(\e(\gamma_0)-\e(\gamma^\epsilon_\theta)\big) + \frac{\epsilon}{1-\theta}\big(\e(\gamma_1)-\e(\gamma^\epsilon_\theta)\big) + \frac{D^2}{2}I_\epsilon.
\end{align}
Multiply by $\theta(1-\theta)$ and compute $\lim_{\epsilon \downarrow 0}I_\epsilon/\epsilon = -\lambda$ to get
\begin{align}
\liminf_{\epsilon \downarrow 0}\e(\gamma^\epsilon_\theta) \le (1-\theta)\e(\gamma_0) + \theta \e(\gamma_1) - \frac{\lambda D^2}{2}\theta(1-\theta).
\end{align}
Finally, since $h_\epsilon(t)\to 0$ pointwise, we have $\gamma^\epsilon_\theta = \mathsf{S}_{h_\epsilon(\theta)}\gamma_\theta \to \gamma_\theta$ as $\epsilon \downarrow 0$. The very definition that an EVI flow $\mathsf{S}_ty$ \emph{starts at the point $y$} also required that $\e(y) \le \liminf_{t\downarrow 0}\e(\mathsf{S}_ty)$. Hence, $\e(\gamma_\theta) \le \liminf_{t\downarrow 0}\e(\gamma^\epsilon_\theta)$, and so $\e$ is geodesically $\lambda$-convex in $Y$. \hfill\qedsymbol

\subsection*{Acknowledgements}
The authors would like to thank Jose Carrillo for sharing and discussing the paper \cite{carrillo2012mass}.

\printbibliography

\noindent\textsc{Louis-Pierre Chaintron, DMA, École normale supérieure, Université PSL, CNRS, 75005 Paris, France}\\
\emph{Email address:} \texttt{louis-pierre.chaintron@ens.fr}

\vspace{1em}

\noindent\textsc{Daniel Lacker, Department of Industrial Engineering and Operations Research, Columbia University}\\
\emph{Email address:} \texttt{daniel.lacker@columbia.edu}

\end{document}